\tikzset{
>=stealth',
help lines/.style={dashed, thick},
axis/.style={<->},
important line/.style={thick},
connection/.style={thick, dotted},
}
\newcommand{\nc}{\newcommand}
\nc{\rnc}{\renewcommand}
\nc{\bb}[1]{{\mathbb #1}}
\nc{\bbA}{\bb{A}}\nc{\bbB}{\bb{B}}\nc{\bbC}{\bb{C}}\nc{\bbD}{\bb{D}}
\nc{\bbE}{\bb{E}}\nc{\bbF}{\bb{F}}\nc{\bbG}{\bb{G}}\nc{\bbH}{\bb{H}}
\nc{\bbI}{\bb{I}}\nc{\bbJ}{\bb{J}}\nc{\bbK}{\bb{K}}\nc{\bbL}{\bb{L}}
\nc{\bbM}{\bb{M}}\nc{\bbN}{\bb{N}}\nc{\bbO}{\bb{O}}\nc{\bbP}{\bb{P}}
\nc{\bbQ}{\bb{Q}}\nc{\bbR}{\bb{R}}\nc{\bbS}{\bb{S}}\nc{\bbT}{\bb{T}}
\nc{\bbU}{\bb{U}}\nc{\bbV}{\bb{V}}\nc{\bbW}{\bb{W}}\nc{\bbX}{\bb{X}}
\nc{\bbY}{\bb{Y}}\nc{\bbZ}{\bb{Z}}
\nc{\mbf}[1]{{\mathbf #1}}
\nc{\bfA}{\mbf{A}}\nc{\bfB}{\mbf{B}}\nc{\bfC}{\mbf{C}}\nc{\bfD}{\mbf{D}}
\nc{\bfE}{\mbf{E}}\nc{\bfF}{\mbf{F}}\nc{\bfG}{\mbf{G}}\nc{\bfH}{\mbf{H}}
\nc{\bfI}{\mbf{I}}\nc{\bfJ}{\mbf{J}}\nc{\bfK}{\mbf{K}}\nc{\bfL}{\mbf{L}}
\nc{\bfM}{\mbf{M}}\nc{\bfN}{\mbf{N}}\nc{\bfO}{\mbf{O}}\nc{\bfP}{\mbf{P}}
\nc{\bfQ}{\mbf{Q}}\nc{\bfR}{\mbf{R}}\nc{\bfS}{\mbf{S}}\nc{\bfT}{\mbf{T}}
\nc{\bfU}{\mbf{U}}\nc{\bfV}{\mbf{V}}\nc{\bfW}{\mbf{W}}\nc{\bfX}{\mbf{X}}
\nc{\bfY}{\mbf{Y}}\nc{\bfZ}{\mbf{Z}}
\nc{\bfa}{\mbf{a}}\nc{\bfb}{\mbf{b}}\nc{\bfc}{\mbf{c}}\nc{\bfd}{\mbf{d}}
\nc{\bfe}{\mbf{e}}\nc{\bff}{\mbf{f}}\nc{\bfg}{\mbf{g}}\nc{\bfh}{\mbf{h}}
\nc{\bfi}{\mbf{i}}\nc{\bfj}{\mbf{j}}\nc{\bfk}{\mbf{k}}\nc{\bfl}{\mbf{l}}
\nc{\bfm}{\mbf{m}}\nc{\bfn}{\mbf{n}}\nc{\bfo}{\mbf{o}}\nc{\bfp}{\mbf{p}}
\nc{\bfq}{\mbf{q}}\nc{\bfr}{\mbf{r}}\nc{\bfs}{\mbf{s}}\nc{\bft}{\mbf{t}}
\nc{\bfu}{\mbf{u}}\nc{\bfv}{\mbf{v}}\nc{\bfw}{\mbf{w}}\nc{\bfx}{\mbf{x}}
\nc{\bfy}{\mbf{y}}\nc{\bfz}{\mbf{z}}
\nc{\mcal}[1]{{\mathcal #1}}
\nc{\calA}{\mcal{A}}\nc{\calB}{\mcal{B}}\nc{\calC}{\mcal{C}}\nc{\calD}{\mcal{D}}
\nc{\calE}{\mcal{E}} \nc{\calF}{\mcal{F}}\nc{\calG}{\mcal{G}}\nc{\calH}{\mcal{H}}
\nc{\calI}{\mcal{I}}\nc{\calJ}{\mcal{J}}\nc{\calK}{\mcal{K}}\nc{\calL}{\mcal{L}}
\nc{\calM}{\mcal{M}}\nc{\calN}{\mcal{N}}\nc{\calO}{\mcal{O}}\nc{\calP}{\mcal{P}}
\nc{\calQ}{\mcal{Q}}\nc{\calR}{\mcal{R}}\nc{\calS}{\mcal{S}}\nc{\calT}{\mcal{T}}
\nc{\calU}{\mcal{U}}\nc{\calV}{\mcal{V}}\nc{\calW}{\mcal{W}}\nc{\calX}{\mcal{X}}
\nc{\calY}{\mcal{Y}}\nc{\calZ}{\mcal{Z}}
\nc{\fA}{\frak{A}}\nc{\fB}{\frak{B}}\nc{\fC}{\frak{C}} \nc{\fD}{\frak{D}}
\nc{\fE}{\frak{E}}\nc{\fF}{\frak{F}}\nc{\fG}{\frak{G}}\nc{\fH}{\frak{H}}
\nc{\fI}{\frak{I}}\nc{\fJ}{\frak{J}}\nc{\fK}{\frak{K}}\nc{\fL}{\frak{L}}
\nc{\fM}{\frak{M}}\nc{\fN}{\frak{N}}\nc{\fO}{\frak{O}}\nc{\fP}{\frak{P}}
\nc{\fQ}{\frak{Q}}\nc{\fR}{\frak{R}}\nc{\fS}{\frak{S}}\nc{\fT}{\frak{T}}
\nc{\fU}{\frak{U}}\nc{\fV}{\frak{V}}\nc{\fW}{\frak{W}}\nc{\fX}{\frak{X}}
\nc{\fY}{\frak{Y}}\nc{\fZ}{\frak{Z}}
\nc{\fa}{\frak{a}}\nc{\fb}{\frak{b}}\nc{\fc}{\frak{c}} \nc{\fd}{\frak{d}}
\nc{\fe}{\frak{e}}\nc{\fFf}{\frak{f}}\nc{\fg}{\frak{g}}\nc{\fh}{\frak{h}}
\nc{\fri}{\frak{i}}\nc{\fj}{\frak{j}}\nc{\fk}{\frak{k}}\nc{\fl}{\frak{l}}
\nc{\fm}{\frak{m}}\nc{\fn}{\frak{n}}\nc{\fo}{\frak{o}}\nc{\fp}{\frak{p}}
\nc{\fq}{\frak{q}}\nc{\fr}{\frak{r}}\nc{\fs}{\frak{s}}\nc{\ft}{\frak{t}}
\nc{\fu}{\frak{u}}\nc{\fv}{\frak{v}}\nc{\fw}{\frak{w}}\nc{\fx}{\frak{x}}
\nc{\fy}{\frak{y}}\nc{\fz}{\frak{z}}
\newtheorem{theorem}{Theorem}[section]
\newtheorem{lemma}[theorem]{Lemma}
\newtheorem{corollary}[theorem]{Corollary}
\newtheorem{prop}[theorem]{Proposition}
\theoremstyle{definition}
\newtheorem{definition}[theorem]{Definition}
\newtheorem{example}[theorem]{Example}
\newtheorem{remark}[theorem]{Remark}
\newtheorem{thm}{Theorem}
\DeclareMathOperator{\im}{im} 
 \DeclareMathOperator{\id}{id}
\DeclareMathOperator{\Hom}{{Hom}} 
\DeclareMathOperator{\sHom}{{\mathscr{H}om}}
 \DeclareMathOperator{\End}{End}
\DeclareMathOperator{\Coh}{Coh}
\DeclareMathOperator{\Ell}{\mathcal{E}}
\DeclareMathOperator{\Pic}{Pic}
\newcommand{\catA}{\mathfrak{A}}
\newcommand{\al}{\alpha}
\newcommand{\la}{\lambda}
\newcommand{\ga}{\gamma}
\newcommand{\de}{\delta}
\newcommand{\be}{\beta}
\newcommand{\dyn}{{\operatorname{d}}}
\newcommand{\vor}{{\operatorname{vor}}}
\DeclareMathOperator{\Kr}{{{Kr}}}
\newcommand{\ep}{\epsilon}
\newcommand{\lag}{\langle}
\newcommand{\rag}{\rangle}
\newcommand{\tensor}{\star}
\newcommand{\stone}{{\tensor'}} %
\newcommand{\sttwo}{{\tensor''}} 
\newcommand{\dynst}{{\dyn*}}
\newcommand{\up}{\Uptheta}
 \gdef\Young(#1){\hbox{$\vcenter
 {\mathcode`,="8000\mathcode`|="8000
  \def,{\global\advance\cols by 1 &}%
  \def|{\cr
        \multispan{\the\cols}\hrulefill\cr
        &\global\cols=2 }%
  \offinterlineskip\everycr{}\tabskip=0pt
  \dimen0=\ht\strutbox \advance\dimen0 by \dp\strutbox
  \halign
   {\vrule height \ht\strutbox depth \dp\strutbox##
    &&\hbox to \dimen0{\hss$##$\hss}\vrule\cr
    \noalign{\hrule}&\global\cols=2 #1\crcr
    \multispan{\the\cols}\hrulefill\cr%
   }
 }$}}
\newcommand{\cha}[1]{\textcolor{blue}{$[$ Changlong: #1 $]$}}
\title[elliptic  classes]
{Elliptic classes via the periodic Hecke module and its Langlands dual}
\author[C.~Lenart]{Cristian~Lenart}
\address{State University of New York at Albany, 1400 Washington Avenue, Albany, NY 12222}
\email{clenart@albany.edu}
\author[G.~Zhao]{Gufang~Zhao}
\address{The University of Melbourne, School of Mathematics and Statistics, 813 Swanston Street, Parkville VIC 3010, Australia}
\email{gufangz@unimelb.edu.au}
\author[C.~Zhong]{Changlong~Zhong}
\address{State University of New York at Albany, 1400 Washington Avenue, Albany, NY 12222}
\email{czhong@albany.edu}
\date{\today}
\begin{document}

\begin{abstract}
This paper explores a construction of the elliptic classes of the Springer resolution using the periodic Hecke module. The module is established by employing the Poincar\'e line bundle over the product of the abelian variety of elliptic cohomology and its dual. Additionally, we introduce the elliptic twisted group algebra, which acts on the periodic module. The construction of the elliptic twisted group algebra is such that the Demazure-Lusztig (DL) operators with dynamical parameters are rational sections. 
We define elliptic classes as rational sections of the periodic module, and  give explicit formulas of the restriction  to fixed points. Our main result shows that a natural assembly of the  DL operators defines a  rational isomorphism between the periodic module and the one associated to the Langlands dual root system. This isomorphism intertwines the elliptic classes with the fixed point basis of the Langlands dual root datum.
\end{abstract}
\maketitle

\section{Introduction}
Elliptic classes are a type of mathematical object that arise in the study of symplectic resolutions and their associated structures. They are essentially rational sections of certain vector bundles on abelian varieties, and they have  many interesting properties and applications.
In recent years, there has been a growing interest in understanding the behavior of elliptic classes in the context of Springer resolutions. Notably, Aganagi\'c and Okounkov \cite{AO21} following the idea of stable envelope, and Rim\'anyi and Weber \cite{RW20, RW22}, following Borisov and Libgober \cite{BL03} on elliptic genus. 
It is worth pointing out that in the latter approach, based on the Bott-Samelson resolution of Schubert varieties,  an iteration formula of the elliptic classes is obtained in terms of elliptic Demazure-Lusztig operators with dynamical parameters.

In this paper, we explore the elliptic classes via a purely algebraic approach using the structures of  
root systems and the Poincar\'e line bundle on an elliptic curve.
We start with an algebraic definition of the (localized) elliptic Hecke algebras and the periodic Hecke modules. The elliptic classes are rational sections of the periodic modules. 
We give explicit formulas of the restriction of elliptic classes on fixed points. 

Moreover, the definition of elliptic Hecke algebra with dynamical parameters is well-behaved with respect to Langlands duality. 
We formulate such relation in terms of the periodic Hecke modules. As a consequence, we deduce some interesting relations between elliptic classes of
 Langlands dual root datum. In the subsequent subsections of the introduction, we present a summary account of our findings, providing  detailed descriptions and explanations.

\subsection{Elliptic Demazure-Lusztig (DL) operators  with dynamical operators}
The idea of elliptic  Hecke algebra and elliptic DL operators goes back to Ginzburg, Kapranov, and Vasserot \cite{GKV97} (see also \cite{GKV95, Ga12}). Rim\'anyi and Weber realized that after adding the dynamical parameters, there are  analogues of the Demazure-Lusztig operators, satisfying the braid relations. 

We reformulate Rim\'anyi-Weber's definition in terms of an elliptic Hecke algebra with dynamical parameters, using only the elliptic curve, the
Poincar\'e line bundle, and the root datum as input. Following directly from this construction  is the  polynomial representation, and more closely related to this paper, the periodic module. 

More precisely, let $E$ be an elliptic curve over $\bbC$. 
Let $X^*$ and $X_*$ be the two dual lattices of  the root datum. Define $\catA=X_*\otimes E$, then the dual of this abelian variety is $\catA^\vee\cong X^*\otimes E$. 
The Weyl group $W$ acts on both $\catA$ and $\catA^\vee$, where the latter action is denoted by $W^\dyn$.

Let $\bbL$ be (a certain $\rho$-shift of) the Poincar\'e line bundle on $\catA\times\catA^\vee$ (Definition \ref{def:bbL}). 
Utilizing the Weyl group actions, we obtain a new line bundle  \[\bbS_{w,v} := \bbL \otimes (w^{-1})^*(v^{-1})^{\dynst}\bbL^{-1}\] for $v, w \in W$.
Define $\bbS$ as the direct sum \[\bbS = \bigoplus_{v, w \in W} \bbS_{w,v},\] referred to as the elliptic twisted group algebra.
The algebra structure is to be understood within a suitable monoidal category of coherent sheaves (see \S~\ref{sec:bbS} for detailed explanations).
Considering the vector space of rational sections, the algebra structure corresponds to the well-known Kostant-Kumar twisted product, augmented by the additional dynamical Weyl group action (refer to \S~\ref{subsec:Sproductrational} for further details).
A key feature of this construction is the existence of a rational section of $\bbS$ associated with each simple root $\alpha$, referred to as the elliptic Demazure-Lusztig operator denoted by $T^{z,\lambda}_\alpha$ (abbreviated as $T_\alpha$ in this introduction).
The appearance of these operators was first noted in \cite{RW20} in an iteration formula for the elliptic classes of Schubert varieties. They satisfies the braid relations, ensuring that $T_w$ is well-defined for any $w \in W$.

By definition, the algebra $\bbS$ acts on a module called the periodic module, defined as \[\bbM = \bigoplus_{w \in W} w^*\bbL.\]
The module $\bbM$ can be viewed as the equivariant elliptic cohomology of the Springer resolution $T^*G/B$.
To establish the fixed point class associated with $w \in W$, we fix a rational section $\fc$ of $\bbL$. We denote the fixed point class as \[\tilde{f}_w = {}^{w^{-1}}\fc,\] which is a section of $\bbM_w = w^*\bbL$.
Furthermore, we define the elliptic class for $w \in W$ as \[\calE_v=T_{vw_0}\tilde f_{w_0}.\]
They coincide with the classes defined by the elliptic stable envelop of Aganagic-Okounkov for suitably chosen parameters (chamber and polarization) \cite{AO21}. 


\subsection{Elliptic classes and Langlands dual}

One of the key properties of the periodic module is that it is equipped with a natural pairing, called the Poincar\'e pairing, which is a bilinear form on the  periodic modules associated with a given root system. 
The precise definition of the Poincar\'e pairings involves a sequence of
duality functors, and  dual algebras and modules, elaborated in \S~\ref{sec:dual}.  They are used to prove  dualities between the elliptic classes.  Moreover,  these functors and modules allow us to relate the periodic modules associated with a given root system to those associated with its Langlands dual, and to derive various properties of these modules and their associated structures.

More precisely, the periodic module associated to the Langlands dual root datum is also a vector bundle on $\catA\times\catA^\vee$, defined by \[\bbM^\dyn :=\bigoplus_{w\in W} w^{\dyn*}\bbL.\]
Indeed, in the definition of $\bbM$ and $\bbM^\dyn$ there is an asymmetry between $\catA$ and $\catA^\vee$. This asymmetry  gives a notion of dynamical periodic module  $\bbM^\dyn$ (\S~\ref{sec:bbS}),   which we identify as the periodic module associated to the Langlands dual root datum (\S~\ref{sec:Langlands}).
The module $\bbM^\dyn $ contains similarly defined fixed-point classes $\tilde f^\dyn _w$ for $w\in W$, and is acted on by the operators $T^\dyn _w$ for $w\in W$, where $T_w^\dyn$ are rational sections of the same algebra $\bbS$. These operators define elliptic classes $\calE^\dyn_w$ of the Langlands dual root datum.

Naturally, the operators $T_w$ assemble into a rational map of vector bundles on $\catA\times\catA^\vee$ (Definition~\ref{def:bbT})
\[\bbT:\bbM\dashrightarrow\bbM^\dyn.\]
Similarly,  the operators $T^\dyn _v$ assemble into a rational map $\bbT^\dyn$.
Now we state the main theorem regarding elliptic classes of Langlands dual root datum. 
\begin{thm}[Theorems \ref{thm:inverse_maps},  \ref{thm:main}]\label{thm:intr}
\begin{enumerate}
\item 
The map $\bbT: \bbM\dashrightarrow\bbM^\dyn$ is a rational isomorphism between vector bundles; 
\item $\bbT(\tilde f_v)=\calE_v^\dyn $, and $\bbT(\calE_v)=\tilde f^\dyn _v$;
\item The inverse map of $\bbT$ is given by $\bbT^\dyn $. 
\end{enumerate}
\end{thm}
The periodic modules $\bbM$ and $\bbM^\dyn$ correspond to the torus equivariant elliptic cohomology of the Springer resolution $T^*G/B$ and its Langlands dual $T^*G^\vee/B^\vee$, respectively. These modules serve as representation-theoretical models for studying the respective cohomology.
Our main theorem establishes a canonical isomorphism between the equivariant elliptic cohomology of $T^*G/B$ and its Langlands dual $T^*G^\vee/B^\vee$. This isomorphism maps the fixed point basis of $T^*G/B$ to the (opposite) elliptic classes of $T^*G^\vee/B^\vee$ and vice versa.

The proof of the theorem relies on the Poincar\'e pairing and the adjunction properties of the elliptic DL operators, which play crucial roles.
In \S~\ref{sec:complete}, we utilize the same adjunction properties to reproduce a result previously established by Rim\'anyi and Weber \cite{RW22}. This result compares restriction formulas of (normalized) elliptic classes and their Langlands dual, and it is linked to the concept of 3d-mirror symmetry, as explained in the aforementioned reference.
The study of 3d-mirror symmetry (also called the symplectic duality) in elliptic cohomology has gained significant attention in recent years, with notable contributions from various researchers (e.g., \cite{BR23, FRV18,H20,KS22, KS23,RSVZ19,RSZV22, RW22,SZ22}). We refer interested readers to \cite[\S~1.3.1]{AO21}, which presents a related statement to that of \cite{RW22}. The main theorem of the present paper offers a distinct perspective on this topic. Another interesting spect of 3d-mirror symmetry is $q$-difference equation, which we hope to investigate in the future. In particular, we expect Theorem~\ref{thm:intr}(1) to be related to the pole cancelation property \cite[Theorem~5]{AO21}.

Elliptic classes are defined as rational sections of specific line bundles (direct sums) on an abelian variety. Consequently, the poles and zeros of an elliptic class are constrained by the properties of the line bundle.
This constraint enables us to analyze the properties of zeros and poles of the elliptic classes. In \S~\ref{sec:restriction}, we provide examples of calculations to illustrate these properties. Moreover, we define the logarithmic degree  of an element from $W$. It can be used to recover the information of zeroes and poles of the restriction coefficients. Meanwhile, Allen Knutson has communicated to the authors that he was able to employ pipe dreams to identify restriction coefficients of cohomology Schubert classes with double Schubert polynomials for the root system of type $A$. Given this result, and since the elliptic classes are the sole classes associated with Schubert varieties in elliptic cohomology, one may regard our restriction coefficients as ``elliptic double Schubert polynomials'' (in type $A$).

In this paper, we study elliptic classes in the elliptic Hecke algebra with dynamical parameters and its periodic module.
We show that a localization of the elliptic Hecke algebra and its periodic module is sufficient to captures the essential properties of the elliptic classes.
An integral structure of the elliptic Hecke algebra with dynamical parameters, that is, the algebra without localization, is interesting for representation theory purpose. 
We refer an interested reader to \cite{ZZ23} for a 
 definition of such an integral structure, and an application in Fourier-Mukai transform of representations.
 
In order to study the restriction formula of the elliptic classes, one can also derive a formula similar to the ones due to Billey~\cite{billey} (for cohomological Schubert classes, cf. also \cite{ajs}) and Graham-Willems~\cite{graham, willems} (for $K$-theory Schubert classes). Such a formula, known as a root polynomial formula, will offer a different perspective compared to the one presented in Theorem~\ref{thm:res}, and has certain advantages. While the formula in the mentioned theorem concerns the $a_{v,w}$-coefficients, the root polynomial formula will focus on the $b_{v,w}$-coefficients. Notably, the exploration of these $b$-coefficients served as the original motivation for the present paper. The root polynomial formula will be presented  in an upcoming paper~\cite{LZZ} soon.

\subsection*{Organization}

In \S~\ref{sec:poin} we introduce  basic concepts of root systems and their associated structures, together with the Poincar\'e line bundle, and in \S~\ref{sec:bbS} we define the elliptic twisted group algebra  $\bbS$, and the product of its rational sections. We define the elliptic DL operators and show that they are rational sections of $\bbS$. We also introduce the periodic module $\bbM$, and give precise statements of Theorem~\ref{thm:intr}(1),(3). In \S~\ref{sec:S_property} we collect basic properties of $\bbS$. In \S~\ref{sec:dual} we introduce the duality functors that are used to construct Poincar\'e pairings. We compare the construction for the Langlands dual system in \S~\ref{sec:Langlands} with the dynamical periodic module. In \S~\ref{sec:DL} we collect other useful versions of the elliptic DL operators. We define the elliptic classes in \S~\ref{sec:ell_class}, study their dual via the Poincar\'e pairings, and prove the main results. For the convenience of the readers, in \S~\ref{sec:complete},  we collect a list of all versions of elliptic classes.  In \S~\ref{sec:mirror}, we deduce a known result due to Rim\'anyi and Weber. In sections \S~\ref{sec:restriction} and \S~\ref{sec:examples}, we give a formula of the restriction coefficients, and define the logarithmic degree. 


\subsection*{Acknowledgements} The authors would like to thank Allen Knutson and Rich\'ard Rim\'anyi for helpful conversations. C.~L. is partially supported by the NSF grant DMS-1855592. 
G.~Z.~is partially supported by the Australian Research Council via grants DE190101222 and DP210103081. Part of this research was conducted when C.~Z.~ was visiting the University of Melbourne,  the Sydney Mathematical Research Institute, University of Ottawa, and the Max Planck Institute of Mathematics. He would like to thank these  institutes for their hospitality and support.

\section{The Poincar\'e line bundle}\label{sec:poin}
In this section we collect some basic facts about the Poincar\'e line bundle $\calP$, root datum, and define the $\rho$-shift of $\calP$, the latter of which serves as the polynomial representation of the DL operators with dynamical parameters.
\subsection{Coherent sheaves and line bundles}We clarify some notation about the Weyl group action on the bundles and their rational sections.
 If $\calF$ is a vector bundle over $X$ on which the isomorphism $w$ acts, and if $U$ is an open subset, then a section $f\in \calF(U)$ defines a section $f'\in (w^*\calF)(w^{-1}{U})$, and we will denote 
\[
f'={}^{w^{-1}}f. 
\]
This is compatible with compositions,   that is, given $v,w$ acting as isomorphisms, then a section $f\in \calF(U)$ defines a section   ${}^{v^{-1}}f\in (v^*\calF)(v^{-1}(U))$, and 
\[{}^{(vw)^{-1}}f={}^{w^{-1}}({}^{v^{-1}}f)\in (w^*(v^*\calF))(w^{-1}(v^{-1}(U)))= ((vw)^*\calF)((vw)^{-1}(U)).\]

 Now let $f:X\to Y$ be a homomorphism of algebraic varieties, then it induces a homomorphism $f^*:\Pic^0(Y)\to \Pic^0(X)$. It has the property that for any $\la\in \Pic^0(Y) $ that defines the line bundle $\calO(\la)$, $f^*\calO(\la)=\calO(f^*\la)$.
We state this property in family and functorially.
\begin{lemma}\label{lem:Pic_Poincare} Let $f:X\to Y$ be a morphism of algebraic varieties, which induces the following maps:
\[
\xymatrix{
Y\times \Pic^0(Y) & X\times \Pic^0(Y)\ar[l]_{f\times 1}\ar[r]^{1\times f^*} & X\times \Pic^0(X) .}
\] Let $\calP_X$ and $\calP_Y$ be the tautological line bundles on $X\times\Pic^0(X)$ and $Y\times\Pic^0(Y)$ respectively \cite[Exercise 9.4.3]{FGA}. Then, there is a canonical isomorphism 
\[
(1\times f^*)\calP_X\cong(f\times 1)^*\calP_Y.
\]
which is furthermore compatible with compositions of morphisms of algebraic varieties.
\end{lemma}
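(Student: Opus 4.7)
The plan is to invoke the universal property of the Poincar\'e line bundle. Recall that, for a sufficiently nice variety $Z$ with basepoint $z_0$, the bundle $\calP_Z$ on $Z\times\Pic^0(Z)$ is characterized up to unique isomorphism by: (i) for any morphism $T\to\Pic^0(Z)$, the pullback of $\calP_Z$ to $Z\times T$ is the family of degree-zero line bundles on $Z$ classified by $T\to\Pic^0(Z)$; and (ii) $\calP_Z$ carries a canonical trivialization (rigidification) along $\{z_0\}\times\Pic^0(Z)$. I would use (i) to match the two bundles in the statement fiberwise, and (ii) to upgrade this matching to a canonical global isomorphism.

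First I would compare the two bundles pointwise on $X\times\Pic^0(Y)$. Fix $[\calL]\in\Pic^0(Y)$. The restriction of $(f\times 1)^*\calP_Y$ to $X\times\{[\calL]\}$ is $f^*\calL$, while the restriction of $(1\times f^*)^*\calP_X$ to the same slice is $\calP_X|_{X\times\{f^*[\calL]\}}$, which by the defining property of $\calP_X$ is a degree-zero line bundle on $X$ of class $f^*[\calL]$, hence again isomorphic to $f^*\calL$. Consequently both bundles on $X\times\Pic^0(Y)$ classify one and the same family of degree-zero line bundles on $X$, parametrized by $\Pic^0(Y)$ via the morphism $f^*:\Pic^0(Y)\to\Pic^0(X)$.

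Next I would globalize this pointwise statement. Pick $x_0\in X$ and set $y_0=f(x_0)$; pulling back the rigidifications of $\calP_X$ along $\{x_0\}\times\Pic^0(X)$ and of $\calP_Y$ along $\{y_0\}\times\Pic^0(Y)$ yields canonical trivializations of both $(f\times 1)^*\calP_Y$ and $(1\times f^*)^*\calP_X$ along $\{x_0\}\times\Pic^0(Y)$. The universal property (i)+(ii) then produces a unique isomorphism
\[(f\times 1)^*\calP_Y\;\cong\;(1\times f^*)^*\calP_X\]
intertwining these rigidifications. Compatibility with compositions $X\xrightarrow{f}Y\xrightarrow{g}Z$ is then immediate from the uniqueness clause: the isomorphism built directly for $g\circ f$ and the composition of those built separately for $f$ and $g$ both respect the rigidifications, so they must coincide.

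The main obstacle I anticipate is ensuring canonicity rather than mere existence, which requires careful bookkeeping of the rigidifications, together with a translation argument in case $f$ does not preserve a chosen basepoint. In the intended applications of this lemma $X$ and $Y$ are (self-products of) the abelian varieties $\catA$ and $\catA^\vee$, and all relevant morphisms (Weyl group actions, projections, and the pairing underlying $\bbL$) are compatible with the group structure; therefore the identity sections propagate naturally, the rigidifications are preserved strictly, and this subtlety does not intervene.
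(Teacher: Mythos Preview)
Your argument is correct and is exactly the approach the paper takes: it simply says the lemma ``follows from the definition of the Picard functor'' in \cite[\S~9]{FGA}, which is precisely the universal property plus rigidification you have unpacked in detail. Your discussion of canonicity via the uniqueness clause and of compatibility with compositions makes explicit what the paper leaves implicit.
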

\begin{proof}
This follows from the definition of the Picard functor \cite[\S~9]{FGA}. 
\end{proof}

\subsection{Elliptic curve}
Let $E$ be an elliptic curve over $\bbC$. We have isomorphisms 
\[
E\cong \bbC/(\bbZ+\bbZ\tau)\cong \bbC^*/q^\bbZ, 
\]
where $\tau\in \bbC$ satisfies $\im \tau>0$, and $q=e^{2\pi i\tau}$. Recall the Jacobi-Theta function
\[\vartheta(u)=\frac{1}{2\pi i}(u^{1/2}-u^{-1/2})\prod_{s>0}(1-q^su)(1-q^su^{-1})\cdot \prod_{s>0}(1-q^s)^{-2}, \quad u\in \bbC^*.\]
Assume $|q|<1$, then this series converges and defines a holomorphic function on a double cover of  $\bbC^*$. 
The line bundle  $\calO(\{0\})$ becomes trivial when lifted to the cover $\bbC^*$. Denote the lifting by $\tilde{L}\to \bbC^*$, which is endowed with a lift of the section $\tilde{s}:\bbC^*\to \tilde{L}$.
The trivialization 
\[\xymatrix{
\tilde{L}\ar@/^/[dr]\ar[rr]^{\cong}_{\phi}&&\bbC^*\times \bbC\ar[dl]\\
&\bbC^*\ar@/^/[ul]^{\tilde{s}}&
}\]
 can be uniquely fixed by the property that $\phi$ commutes with multiplication by $q^\bbZ$ and that the derivative of $\phi\circ\tilde{s}$ is 1 at $1\in \bbC^*$ \cite[p.38]{Sie}.
The above formula for the theta function are written so that it is a function on $\bbC^*$, that vanishes of order 1 at $q^\bbZ$, non-zero everywhere else, and the derivative at $1\in \bbC^*$ is equal to 1. In what follows, we denote $\theta(x)=\vartheta(e^{2\pi i x})$.

We identify $\Pic^0(E)$ with $E$ itself, via the map $E\to \Pic^0(E)$, $\lambda\mapsto \{\lambda\}-\{0\}$.
In particular, on $E\times E$, there is a universal line bundle $\calP_E$, called  the Poincar\'e line bundle. 
Writing the coordinate of $E\times E$ as $(z,\lambda)$, the Poincar\'e line bundle is characterized 
by the property that 
for any $\lambda\in E$, we have $\calP_E|_{E\times \lambda}=\calO( \{\lambda\}-\{0\})$, and  $\calP_E|_{0\times E}=\calO$.
Equivalently, with $\theta$ introduced above,  a rational section of $\calP_E$ is \[\frac{\theta(z-\lambda)}{\theta(z)}.\]
 
 The following standard fact is used frequently in determining rational sections of line bundles over the elliptic curve.
\begin{prop}\label{prop:E_zero_pole}
Let $x\in E$ which determines the degree zero line bundle $\calO(x)$. If $f$ is a rational section of  $\calO(x)$ on $E$ with zeros $\{p_i, i=1,...,n\}$ and poles $\{q_j, j=1,...,m\}$ (counting with multiplicities), then 
\[
m=n, \text{ and } \sum p_i-\sum q_j=x.
\]
\end{prop}

\subsection{Root system}
\label{subsec:rootsys}
 We recall the notion of root datum following \cite[Exp. XXI, \S~1.1]{SGA3}. A root datum of rank $n$ consists of two lattices $X^*$ and $X_*$ of rank $n$ with a perfect $\bbZ$-valued pairing $\lag, \rag$,  non-empty finite subsets of roots and coroots $\Phi\subset X^*$, and $\Phi^\vee\subset X_*$ with a bijection $\alpha\mapsto \alpha^\vee$.
If the root datum  has a maximal torus $T$, the lattices above are respectively $\bbX^*(T)$ and $\bbX_*(T)$, the group of characters and co-characters of $T$. 
In the present paper, we fix a set $\Sigma$ of simple roots $\{\alpha_1,\dots,\alpha_n\}$, with simple reflections $s_i:=s_{\al_i}$.  The Weyl group is denoted by $W$. For each $w\in W$, define the inversion set $\Phi(w)=w\Phi_-\cap \Phi_+$,  and the length as $\ell(w)$, with the sign $\ep_w=(-1)^{\ell(w)}$.  
 Let $\rho$, resp. $\rho^\vee$ be the half-sum of all positive roots of $\Phi$ and $\Phi^\vee$.

 Define $\catA=X_*\otimes E$, and $\catA^\vee:=X^*\otimes E$.
The Weyl group $W$ acts on both $\catA$ and $\catA^\vee$, and hence on the product $\catA\times\catA^\vee$, we have the action of product of two Weyl groups. To avoid confusion, we write the factor of $W$ acting on $\catA$ as $W$ and the factor acting on $\catA^\vee$ as $W^\dyn$. Similarly, for any element $w\in W$, we write the element in $W^\dyn$ as $w^\dyn$.
For clarity, we call them respectively the {\it spectral Weyl group action} and the {\it dynamical Weyl group action}. Since they act on different factors of the product, they obviously commute.

The isomorphism $E\cong \Pic^0(E)$ induces an isomorphism 
 \[\catA^\vee\cong \Pic^0(\catA), \]
 where $\Pic^0(\catA)$ is the dual abelian variety of $\catA$. 
In particular, the notation $\catA^\vee$  has no ambiguity. 
We now make this isomorphism explicit. 
We consider a special point $\mu \otimes t\in \catA^\vee=X^*\otimes E$, with $\mu \in  X^*$ and $t\in E$. 
The character $\mu $ defines a map $\chi_\mu :\catA\to E$, and $t\in E=E^\vee$ defines a degree 0 line bundle $\calO(t)$ on $E$. 
Under the above isomorphism, the line bundle on $\catA$ corresponding to $\mu \otimes t$ is $\chi_\mu ^*\calO(t)$. 
In what follows, we write $\mu \otimes t$ simply as $t\mu $, and $\calO(t\mu)=\chi^*_\mu\calO(t)$. Also denote $z_\mu=\chi_\mu(z)$ for $z\in \catA$. 
   Dually, each $\mu^\vee\in X_*$ defines a map $\chi_{\mu^\vee}:\catA^\vee\to E$, and denote $\la_{\mu^\vee}=\chi_{\mu^\vee}(\la), \la\in \catA^\vee$. 
 For $t\in E$, we have similarly $\chi_{\mu ^\vee}^*\calO(t)=\calO(t\mu ^\vee)$.


\subsection{The $\rho$-shift of the Poincar\'e line bundle}
In this section, and what follows we fix an $\hbar\in E$.

On $\catA\times\catA^\vee$ there is a  line bundle, 
the Poincar\'e line bundle, also denoted by $\calP$ \cite[\S~9]{P03}. It satisfies the universal property, that is, for any variety $X$, if  there is any line bundle $\calL$ over $\catA\times S $ for a variety $S$, then there is a unique map $g:\catA\times S\to \catA\times \catA^\vee$ such that $\calL=g^*\calP$.
In particular, it 
satisfies the property that $\calP|_{\catA\times \la}\cong \calO(\la)$ for any $\la\in \catA^\vee$.
Such a line bundle is unique if we impose a normalization condition
$\calP|_{a\times \catA^\vee}\cong \calO_{\catA^\vee}$ for a given point $a\in \catA$.
In what follows, we take $a=\hbar\rho^\vee$ with $\hbar\in E$. 
Here the uniqueness follows from the See-Saw Lemma, which asserts that two line bundles on $\catA\times\catA^\vee$ are isomorphic if the restrictions on  $\catA\times\{\lambda\}$ are isomorphic for all $\lambda\in \catA^\vee$, and on $\{z\}\times\catA^\vee$ for some $z\in \catA$. Indeed, this property will be used below frequently   in showing two line bundles over $\catA\times \catA^\vee$ are isomorphic. 
We note that the See-Saw Lemma only implies the existence of an isomorphism but does not necessarily provide a canonical one. 

Define the dotted action of $W$ on $\catA$ by \[
w_\bullet z=w(z-\hbar\rho^\vee)+\hbar\rho^\vee.
\]
By definition, the map $w_\bullet:\catA\to \catA$ induces a map $w^*:\Pic^0(\catA)\to \Pic^0(\catA)$, which is denoted by $(w^{-1})^\dyn$. That is, $\calO((w^{-1})^\dyn(\la))=w^*\calO(\la)$ as line bundles on $\catA$. Then  Lemma~\ref{lem:Pic_Poincare} gives an isomorphism 
\begin{equation}\label{eqn:Pincare_dot}
(w_\bullet\times 1)^*\calP\cong(1\times (w^{-1})^\dyn)^*\calP,
\end{equation}
which is compatible with compositions of group elements.

\label{subsec:rho-shift}

\begin{definition}\label{def:bbL}Define $\bbL=\vor^*\calP$ on $\catA\times \catA^\vee$, where  $\vor:  \catA^\vee\to  \catA^\vee, ~ \la\mapsto \la+\rho \hbar.$
\end{definition}
The line bundle $\bbL$  will be the polynomial representation of the dynamical elliptic Hecke algebra \cite{ZZ23}. Also see Lemma \ref{lem:poly} below.
  
\section{The elliptic twisted group algebra } \label{sec:bbS}

In this section we  define the elliptic twisted group algebra $\bbS$ and the periodic modules $\bbM$, together with some of their variant forms. We also explain the algebra structure of $\bbS$ and its action on $\bbM$ in terms of rational sections. Note that  rational sections of vector bundles in this paper mean sections on open subsets on which they are defined.

\subsection{The elliptic twisted group algebra}
We define the twisted algebra, which is a vector bundle induced by the Weyl group actions on the line bundle $\bbL$. 
\begin{definition}\label{def:bbS}
We define, for each $w,v\in W$
\[
\bbS_{w,v}=\bbL\otimes (w^{-1})^*(v^{-1})^{\dyn*}\bbL^{-1}. 
\]
Let $\bbS=\bigoplus_{w,v\in W}\bbS_{w,v}$. 
\end{definition}
The goal of the present section is to explain the algebra structure on $\bbS$. In order to do so, we introduce an ambient monoidal category, in which $\bbS$ is an algebra object.

\subsection{A tensor structure} 
We consider the following category $\Coh^{W\times W^\dyn}(\catA\times \catA)$, that is, the  $W\times W^\dyn$-graded category of coherent sheaves on $\catA\times \catA^\vee$. Objects of  this category can be written as $\calF=\bigoplus_{w\in W,v\in W^\dyn}\calF_{w,v}$ with  $\calF_{w,v}$ at degree $(w,v)$.  We then define the tensor product $\star$ by 
\[
\calF_{w,v}\tensor \calF_{w',v'}':=\calF_{w,v}\otimes(w^{-1})^*(v^{-1})^{\dyn*}\calF'_{w',v'},
\]
which is set to belong to degree $(ww',vv')$. Clearly $\calO$ is the identity object. It is trivial to check that this defines a tensor monoidal category structure with unit $\calO$ concentrated at degree $(e,e)$.

\begin{lemma} \label{lem:tensor}
\begin{enumerate}
 \item \label{lem:associat} We have the following  composition properties:
\begin{align*}
\bbS_{w_1w_2, v_1v_2}&=\bbS_{w_1,v_1}\otimes (w_1^{-1})^*(v_1^{-1})^{\dyn*}\bbS_{w_2,v_2},&\quad \bbS_{w,v}^{-1}&=(w^{-1})^*(v^{-1})^{\dyn*}\bbS_{w^{-1}, v^{-1}}.
\end{align*}
\item The objects \[\bbS_{W\times W^\dyn}:=\bigoplus_{w,v\in W}\bbS_{w,v}\hbox{  and }~\bbS_{W\times W^\dyn}^{-1}:=\bigoplus_{w,v\in W}\bbS_{w,v}^{-1}\] are  monoidal objects, where the degrees of $\bbS_{w,v}$ and $\bbS_{w,v}^{-1}$ are both $(w,v)$. For simplicity, we write $\bbS$ and $ \bbS^{-1}$ respectively.
\end{enumerate}
\end{lemma}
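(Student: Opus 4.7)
The plan is to unfold the definition $\bbS_{w,v} = \bbL \otimes (w^{-1})^*(v^{-1})^{\dyn*}\bbL^{-1}$ and verify both identities of part (1) by direct calculation with cancellation of inverse tensor factors; part (2) is then an immediate consequence.

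For the composition formula, I would expand the right-hand side as
\[
\bbS_{w_1,v_1}\otimes (w_1^{-1})^*(v_1^{-1})^{\dyn*}\bbS_{w_2,v_2} = \bbL\otimes (w_1^{-1})^*(v_1^{-1})^{\dyn*}\bbL^{-1}\otimes(w_1^{-1})^*(v_1^{-1})^{\dyn*}\bbL\otimes(w_1^{-1})^*(v_1^{-1})^{\dyn*}(w_2^{-1})^*(v_2^{-1})^{\dyn*}\bbL^{-1}.
\]
The two middle factors cancel. To match $\bbS_{w_1w_2,v_1v_2}$ I need two elementary facts: (a) since the spectral $W$-action lives on the $\catA$ factor and the dynamical $W^{\dyn}$-action on the $\catA^\vee$ factor, the pullbacks $(w^{-1})^*$ and $(v^{-1})^{\dyn*}$ commute; (b) functoriality of pullback gives $(w_1^{-1})^*(w_2^{-1})^* = (w_2^{-1}w_1^{-1})^* = ((w_1w_2)^{-1})^*$, and similarly on the dynamical side. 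Substituting these identities rewrites the surviving two factors as $\bbL\otimes ((w_1w_2)^{-1})^*((v_1v_2)^{-1})^{\dyn*}\bbL^{-1}$, which is $\bbS_{w_1w_2,v_1v_2}$ by definition.

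For the inverse formula, I would unfold
\[
(w^{-1})^*(v^{-1})^{\dyn*}\bbS_{w^{-1},v^{-1}} = (w^{-1})^*(v^{-1})^{\dyn*}\bbL\otimes (w^{-1})^*(v^{-1})^{\dyn*}w^*v^{\dyn*}\bbL^{-1},
\]
and use $(w^{-1})^*w^* = \id$ (and the analogous dynamical identity) on the second factor to reduce it to $(w^{-1})^*(v^{-1})^{\dyn*}\bbL\otimes\bbL^{-1}$, which is exactly $\bbS_{w,v}^{-1}$.

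For part (2), the $\star$-product in $\Coh^{W\times W^{\dyn}}(\catA\times\catA^\vee)$ was defined so that $\bbS_{w_1,v_1}\star\bbS_{w_2,v_2} = \bbS_{w_1,v_1}\otimes (w_1^{-1})^*(v_1^{-1})^{\dyn*}\bbS_{w_2,v_2}$, and part (1) identifies this with $\bbS_{w_1w_2,v_1v_2}$, placed in degree $(w_1w_2, v_1v_2)$ as required. Hence $\bbS$ is closed under $\star$ and therefore a monoidal subobject; taking inverses on both sides of the composition identity yields the analogous closure for $\bbS^{-1}$. I expect no real obstacle here: the entire argument is a bookkeeping exercise in the functoriality of pullback, with the only point requiring attention being the order-reversal $(w_1w_2)^* = w_2^*w_1^*$ versus the way indices are written in the statement, which is resolved by the symmetric appearance of $w^{-1}$ (and $v^{-1}$) in the definition of $\bbS_{w,v}$.
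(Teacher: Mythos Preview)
Your proposal is correct and matches the paper's argument: both simply unfold the definition of $\bbS_{w,v}$ and use functoriality of pullback together with the commutation of the $W$- and $W^{\dyn}$-actions. The only cosmetic difference is that the paper derives the inverse identity by specializing the composition identity to $w_1w_2=e=v_1v_2$, whereas you verify it by a separate direct computation; either way the content is the same.
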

\begin{proof}
(1). We have
\begin{align*}
\bbS_{w_1w_2,v_1v_2}&=\bbL\otimes ((w_1w_2)^{-1})^*((v_1v_2)^{-1})^{\dyn*}\bbL^{-1}\\
&=\bbL\otimes (w_1^{-1})^*(w_2^{-1})^*(v_1^{-1})(v_2^{-1})^{\dyn*}\bbL^{-1}\\
&=\bbL\otimes (w_1^{-1})^*(v_1^{-1})^{\dyn*}\left((w_2^{-1})^*(v_2^{-1})^{\dyn*}\bbL^{-1}\right)\\
&=\bbL\otimes (w_1^{-1})^*(v_1^{-1})^{\dyn*}(\bbS_{w_2, v_2}\otimes \bbL^{-1})\\
&=\bbL\otimes (w_1^{-1})^*(v_1^{-1})^{\dyn*}\bbS_{w_2,v_2}\otimes (w_1^{-1})^*(v_1^{-1})^{\dyn*}\bbL^{-1}\\
&=\bbS_{w_1,v_1}\otimes (w_1^{-1})^*(v_1^{-1})^{\dyn*}\bbS_{w_2, v_2}. 
\end{align*}
This proves the first identity. Considering $w_1w_2=e=v_1v_2$ in the first identity, we obtain the second identity.

(2). 
From part (1),  we  have an isomorphism
\begin{align*}\bbS_{w,v}\tensor\bbS_{x,y}&=\bbS_{w,v}\otimes (w^{-1})^*(v^{-1})^{\dyn*}\bbS_{x,y}\overset{\sim}\longrightarrow\bbS_{wx, vy},
\end{align*}
it is then easy to see that this product is associative. The statement for $\bbS^{-1}_{W\times W^\dyn}$ can be proved similarly. 
\end{proof}
The sheaf $\bbS$, together with its variant forms $\bbS', \bbS'', \bbS(-\la), \bbS(-\la)', \bbS(-\la)'', \bbS(-z), \bbS(-z)', \bbS(-z)''$ defined below,  are called the elliptic twisted group algebras.

\begin{lemma}\label{lem:poly}
The object $\bbL\in \Coh(\catA\times \catA^\vee)$ is a left module over the monoidal object $\bbS$.
\end{lemma}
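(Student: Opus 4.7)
The plan is to produce the module action componentwise using the tautological factorization of $\bbS_{w,v}$, and then derive the associativity and unit axioms from Lemma~\ref{lem:tensor}.

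First I would note that for each $(w,v)\in W\times W^\dyn$, unfolding the definitions of $\tensor$ and of $\bbS_{w,v}$ gives
\[
\bbS_{w,v}\tensor \bbL \;=\; \bbS_{w,v}\otimes (w^{-1})^*(v^{-1})^{\dyn*}\bbL \;=\; \bbL \otimes (w^{-1})^*(v^{-1})^{\dyn*}\bigl(\bbL^{-1}\otimes \bbL\bigr).
\]
The canonical trivialization $\bbL^{-1}\otimes \bbL\cong \calO$, pulled back by $(w^{-1})^*(v^{-1})^{\dyn*}$, then produces a canonical isomorphism $a_{w,v}\colon \bbS_{w,v}\tensor \bbL \isom \bbL$. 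I would assemble these into the candidate action map $a=\bigoplus_{w,v}a_{w,v}\colon \bbS\tensor \bbL \to \bbL$. The unit axiom is immediate since $\bbS_{e,e}=\bbL\otimes \bbL^{-1}$ and $a_{e,e}$ reduces to the standard isomorphism $\calO\otimes \bbL\cong \bbL$.

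Second, I would verify the associativity axiom: for all $(w_1,v_1), (w_2,v_2)\in W\times W^\dyn$, the two composites
\[
\bbS_{w_1,v_1}\tensor(\bbS_{w_2,v_2}\tensor \bbL)\xrightarrow{\id\tensor a_{w_2,v_2}}\bbS_{w_1,v_1}\tensor \bbL\xrightarrow{a_{w_1,v_1}}\bbL
\]
and
\[
(\bbS_{w_1,v_1}\tensor \bbS_{w_2,v_2})\tensor \bbL\xrightarrow{\mu\tensor \id}\bbS_{w_1w_2,v_1v_2}\tensor \bbL\xrightarrow{a_{w_1w_2,v_1v_2}}\bbL
\]
must agree (once the associator of $\tensor$ is applied on the left), where $\mu$ is the algebra multiplication of Lemma~\ref{lem:tensor}(1). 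Expanding everything as words in $\bbL^{\pm 1}$ pulled back by $(w_1^{-1})^*(v_1^{-1})^{\dyn*}$ and $(w_2^{-1})^*(v_2^{-1})^{\dyn*}$, both compositions reduce to the same cascade of cancellations $\bbL^{-1}\otimes \bbL\cong \calO$; the identifications used along the top row are precisely those that witness the composition rule $\bbS_{w_1 w_2, v_1 v_2}\cong \bbS_{w_1,v_1}\tensor \bbS_{w_2,v_2}$ in Lemma~\ref{lem:tensor}(1).

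The main point to monitor is the bookkeeping of the pullback functors and the canonical trivializations. Since $\bbL^{\pm 1}$ are line bundles and all the structural maps are tautological, the coherence ultimately reduces to the monoidality already encoded in Lemma~\ref{lem:tensor}(1); no input beyond that lemma and the formal properties of pullback and of the tensor structure on $\Coh^{W\times W^\dyn}(\catA\times \catA^\vee)$ is required.
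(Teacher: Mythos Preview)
Your proposal is correct and follows essentially the same approach as the paper: define the action componentwise via the tautological isomorphism $\bbS_{w,v}\otimes (w^{-1})^*(v^{-1})^{\dyn*}\bbL\cong\bbL$, and then check associativity using the composition rule of Lemma~\ref{lem:tensor}. The paper's own proof is terser, simply asserting that associativity is straightforward, whereas you have spelled out the unit axiom and the two composites explicitly; but there is no genuine difference in strategy.
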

\begin{proof}The actions are defined as follows:
\begin{align*}
\bbS_{w,v}\tensor \bbL&= \bbS_{w,v}\otimes (w^{-1})^*(v^{-1})^{\dyn*}\bbL\overset{\sim}\longrightarrow\bbL,
\end{align*}
it is then straightforward to show that the associativity holds.
\end{proof}

\subsection{Equivalent tensor categories}
We define another tensor product on the category \mbox{$\Coh^{W\times W^\dyn}(\catA\times \catA^\vee)$}: for objects $\calF_{w_1,v_1}$ and $\calG_{w_1,v_2}$ with degrees $(w_1,v_1)$ and $(w_2,v_2)$, respectively, define
\[
\calF_{w_1, v_1}\stone \calG_{w_2,v_2}=w_2^*\calF_{w_1,v_1}\otimes (v_1^{-1})^{\dyn*}\calG_{w_2,v_2},
\]
whose degree is $(w_1w_2, v_1v_2)$.

Similar as the second tensor, we can define the third  tensor product structure on $\Coh^{W\times W^\dyn}(\catA\times \catA^\vee)$ as 
\[
\calF_{w_1,v_1}\sttwo \calG_{w_2,v_2}=v_2^{\dyn*}\calF_{w_1,v_1}\otimes (w_1^{-1})^*\calG_{w_2,v_2}.
\]

\begin{lemma}\label{lem:monoidal_equivalence}
We have the following commutative diagram of equivalences between these different tensor monoidal categories:
\[
\xymatrix{&\Coh^{W\times W^\dyn}(\catA\times \catA^\vee, \tensor)\ar[dl]_-{\calF_{w,v}\mapsto w^*\calF_{w,v}}\ar[dr]^-{\calF_{w,v}\mapsto v^{\dyn*}\calF_{w,v}}&\\
\Coh^{W\times W^\dyn}(\catA\times \catA^\vee, \stone)\ar[rr]^-{\calF_{w,v}\mapsto (w^{-1})^*v^{\dyn*}\calF_{w,v}}&&\Coh^{W\times W^\dyn}(\catA\times \catA^\vee, \sttwo).}
\]
\end{lemma}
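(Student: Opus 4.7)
The proof plan is to verify three things in order: (a) the underlying functors are equivalences of abelian categories, (b) each of them carries a natural monoidal structure, and (c) the diagram of monoidal functors commutes up to a canonical natural isomorphism. Throughout, the crucial fact will be that the spectral action of $W$ on $\catA$ and the dynamical action $W^\dyn$ on $\catA^\vee$ act on different factors of $\catA\times\catA^\vee$, so their pullbacks commute on the nose, together with the rule $(fg)^*=g^*f^*$ for composition of pullbacks.

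First, I would observe that each of the three functors is just the degreewise application of an invertible pullback. Since $w\colon\catA\to\catA$ and $v^\dyn\colon\catA^\vee\to\catA^\vee$ are automorphisms, their pullbacks are auto-equivalences of $\Coh(\catA\times\catA^\vee)$; hence each of $\Phi_1\colon\calF_{w,v}\mapsto w^*\calF_{w,v}$, $\Phi_2\colon\calF_{w,v}\mapsto v^{\dyn*}\calF_{w,v}$, and $\Phi_3\colon\calF_{w,v}\mapsto (w^{-1})^*v^{\dyn*}\calF_{w,v}$ is an equivalence of the underlying $W\times W^\dyn$-graded abelian categories, with inverse obtained by applying the inverse pullback in each degree.

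Next, I would verify monoidality of $\Phi_1$ by direct computation on objects $\calF_{w_1,v_1}$ and $\calG_{w_2,v_2}$. The $\tensor$-product places $\calF_{w_1,v_1}\otimes(w_1^{-1})^*(v_1^{-1})^{\dyn*}\calG_{w_2,v_2}$ in degree $(w_1w_2, v_1v_2)$; applying $\Phi_1$ means pulling back along $w_1w_2$. Using $(w_1w_2)^*(w_1^{-1})^*=w_2^*$ and the commutation of $w_2^*$ with $(v_1^{-1})^{\dyn*}$, one gets $w_2^*w_1^*\calF_{w_1,v_1}\otimes(v_1^{-1})^{\dyn*}w_2^*\calG_{w_2,v_2}$, which is precisely $\Phi_1(\calF_{w_1,v_1})\stone\Phi_1(\calG_{w_2,v_2})$. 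The same pattern yields the monoidality of $\Phi_2$ after replacing $w$-pullbacks with $v^\dyn$-pullbacks. For $\Phi_3$ one can either perform the analogous computation directly or bootstrap from $\Phi_3=\Phi_2\circ\Phi_1^{-1}$, which is established in the next step.

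Finally, for commutativity of the diagram, I would compute $\Phi_3\circ\Phi_1(\calF_{w,v})=(w^{-1})^*v^{\dyn*}(w^*\calF_{w,v})$ and simplify using the commutation of spectral and dynamical pullbacks together with $(w^{-1})^*w^*=\id$, giving $v^{\dyn*}\calF_{w,v}=\Phi_2(\calF_{w,v})$. This canonical isomorphism is natural in $\calF_{w,v}$ and, by the same indexing bookkeeping as in the monoidality check, is compatible with the tensor structures. The only potential obstacle is purely notational: keeping track of which Weyl-group element acts by pullback in each slot and ensuring that the coherence isomorphisms between the three tensor products are the canonical ones induced by the commutation $w^*v^{\dyn*}=v^{\dyn*}w^*$ rather than non-trivial twists; once this bookkeeping is set up cleanly, every verification is a one-line identity.
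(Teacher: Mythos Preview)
Your proposal is correct and is exactly the kind of direct verification the paper has in mind: the paper's own proof consists of the single sentence ``The proof is straightforward by using the definitions,'' and your argument simply fills in those details by checking degreewise that the pullback functors are equivalences, that they intertwine the three tensor products, and that the triangle commutes via the canonical identification $(w^{-1})^*w^*=\id$ together with the commutation of spectral and dynamical pullbacks.
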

\begin{proof}
The proof is straightforward by using the definitions. \end{proof}

Define 
\[\bbS_{w,v}'=w^*\bbS_{w,v}, \quad \bbS'=\bbS'_{W\times W^\dyn}=\bigoplus_{w,v}\bbS'_{w,v},
\]
which by Lemma~\ref{lem:monoidal_equivalence} is a monoidal object in the category $(\Coh^{W\times W^\dyn}(\catA\times \catA^\vee), \stone)$. 
The algebra structure is obtained from  the following isomorphism
\[
\bbS'_{w_1,v_1}\stone \bbS'_{w_2,v_2}=w_2^*\bbS'_{w_1,v_1}\otimes  (v_1^{-1})^{\dyn*}\bbS'_{w_2,v_2}\overset{\sim}\longrightarrow\bbS'_{w_1w_2,v_1v_2}.\]

Similarly,  the object 
\[\bbS''=\bbS''_{W\times W^\dyn}=\bigoplus_{w,v}\bbS''_{w,v}, \quad \text{with }\quad 
\bbS''_{w,v}:=v^{\dyn*}\bbS_{w,v}
\]
defines a monoidal object in $(\Coh^{W\times W^\dyn}(\catA\times \catA^\vee), \sttwo)$.

\subsection{The periodic modules}\label{subsec:act_periodic}

The category $\Coh^W(\catA\times \catA^\vee)$ is a module over the monoidal category $(\Coh^{W\times W^\dyn}(\catA\times \catA^\vee), \star)$, where the action is defined as\[\calF_{w,v}\star\calG_x:=(wx)^*\calF_{w,v}\otimes (v^{-1})^{\dyn*}\calG_x\] sitting in degree $wx$.
The associativity is straightforward to check. 
Similarly,  $\Coh^{W^\dyn}(\catA\times \catA^\vee)$ is also a   module over the monoidal category $(\Coh^{W\times W^\dyn}(\catA\times \catA^\vee), \star)$.

Via the equivalences in Lemma \ref{lem:monoidal_equivalence}, the monoidal categories  $(\Coh^{W\times W^\dyn}(\catA\times \catA^\vee), \stone)$ and $(\Coh^{W\times W^\dyn}(\catA\times \catA^\vee), \sttwo)$ have module categories $\Coh^W(\catA\times \catA^\vee)$ and $\Coh^{W^\dyn}(\catA\times \catA^\vee)$ where we use the  notations $\stone$ and $\sttwo$  for the actions. 
We leave the details to interested readers. 

\begin{definition}
Define the periodic module in $\Coh^W(\catA\times \catA^\vee)$ as follows:
\[
\bbM=\bigoplus _{x\in W}\bbM_x, \quad \bbM_x=x^*\bbL.
\]Denote, for each $w\in W$, $\fp_w:\bbM\to \bbM_w$ the projection and $\fri_w:\bbM_w\to \bbM$ the embedding of each summand. 
\end{definition}
The action of $\bbS$ in $(\Coh^{W\times W^\dyn}(\catA\times \catA^\vee), \star)$  on the object $\bbM$ in $\Coh^W(\catA\times \catA^\vee)$ is denoted by 
\[\bullet:\bbS\star\bbM\to\bbM,\]
via, for each $w,v,x\in W$, the canonical isomorphism
\begin{equation}\label{eqn:action}
\bullet: \bbS_{w,v}\star x^*\bbL= (wx)^{*}\bbS_{w,v}\otimes (v^{-1})^{\dyn*}x^*\bbL=(wx)^{*}\bbL\otimes (wx)^{*}(w^{-1})^*(v^{-1})^{\dyn*}\bbL^{-1}\otimes (v^{-1})^{\dyn*}x^*\bbL=(wx)^{*}\bbL.\end{equation}

Similarly, \[\bbM^\dyn:=\bigoplus_y\bbM_y^\dyn, \quad  \bbM_y^\dyn=y^{\dyn*}\bbL\]
is an object in $\Coh^{W^\dyn}(\catA\times \catA^\vee)$.
We denote 
  $\fp_w^\dyn:\bbM^\dyn\to \bbM_w^\dyn$, and $\fri_w^\dyn:\bbM_w^\dyn \to \bbM$. 
The action of $\bbS$ in $(\Coh^{W\times W^\dyn}(\catA\times \catA^\vee), \star)$ on $\bbM^\dyn$ in $\Coh^{ W^\dyn}(\catA\times \catA^\vee)$ is denoted by  \[\bullet^\dyn:\bbS\star\bbM^\dyn\to\bbM^\dyn.\]

\begin{remark}\label{rmk:ell_coh}
The periodic module of the usual affine Hecke algebra, besides its applications in combinatorics and representation theory, also has a geometric realization as the $K$-theory of the Springer resolutions $T^*G/B$  \cite{L}. 
Therefore, the  periodic modules $\bbM$ and $\bbM^\dyn$ in the present paper should be considered as the equivariant  elliptic cohomology of the  Springer resolutions $T^*G/B$ and $T^*G^\vee/B^\vee$ respectively.
A version of such construction, in the absence of the dynamical parameters, can be found in \cite{ZZ22}. 
We notice another construction by Hikita \cite{H20} in elliptic cohomology related to Lusztig's original \cite{L}. We leave  its relation with the present paper to  future investigations. 
\end{remark}

\subsection{Variant forms of the actions} From the equivalences of monoidal categories in Lemma \ref{lem:monoidal_equivalence}, the algebra object $\bbS'$ then act on module objects $\bbM$ and $\bbM^\dyn$.
For example, the action 
\begin{equation}\label{eq:periodicaction}
\bullet: \bbS'_{w,v}\stone \bbM_x\overset\sim\longrightarrow \bbM_{wx}
\end{equation}
is defined by 
\[
\bbS'_{w,v}\stone \bbM_x:=x^*\bbS'_{w,v}\otimes (v^{-1})^{\dyn*}\bbM_x\overset\sim\to x^*\bbS'_{w,v}\otimes (v^{-1})^{\dyn*}\bbS'_{x,e}\otimes (v^{-1})^{\dyn*}\bbL\overset\sim\to \bbS'_{wx,v}\otimes (v^{-1})^{\dyn*}\bbL\overset\sim \to\bbM_{wx}.
\]

Similarly, equivalences in Lemma \ref{lem:monoidal_equivalence} also induces an action of $\bbS''$ on $\bbM^\dyn$, denoted by 
\begin{equation}\label{eq:periodicactiondyn}
\bullet^\dyn:\bbS''_{w,v}\sttwo \bbM_y^\dyn\to \bbM^\dyn_{vy}.
\end{equation}
More precisely, we have
\[
\bbS''_{w,v}\sttwo\bbM''_{y}:=y^{\dyn*}\bbS''_{w,v}\otimes (w^{-1})^*\bbM''_y\overset\sim\to y^{\dyn*}\bbS''_{w,v}\otimes (w^{-1})^*\bbS''_{e,y}\otimes (w^{-1})^*\bbL\overset\sim\to \bbS''_{w,vy}\otimes (w^{-1})^*\bbL\overset\sim\to \bbM_{vy}^\dyn.
\]

For the convenience of the readers, we summarize the constructions in the following proposition.
\begin{prop}
\begin{enumerate}
\item The category $\Coh^{W\times W^\dyn}(\catA\times \catA)$ with $\star$ is a monoidal category, in which $\bbS$ with the morphism $\bbS\star\bbS\to \bbS$ in Lemma~\ref{lem:associat} is an algebra (monoidal) object. 
\item The category $\Coh^W(\catA\times \catA^\vee)$  with  $\star$ is a module category of $\Coh^{W\times W^\dyn}(\catA\times \catA)$, in which the object $\bbM$ with the morphism $\bbS\star\bbM\to \bbM$ in \eqref{eqn:action} is a  module of $\bbS$.
\item 
The category $\Coh^{W^\dyn}(\catA\times \catA^\vee)$  with  $\star$ is a module category of $\Coh^{W\times W^\dyn}(\catA\times \catA)$, in which the object $\bbM^\dyn$ with the corresponding action is is a  module of $\bbS$.
\item Similar results to (1-3) hold for the category $(\Coh^{W\times W^\dyn}(\catA\times \catA),\star')$ which containing the object $\bbS'$, the category $(\Coh^{W\times W^\dyn}(\catA\times \catA),\star'')$ which containing the object $\bbS''$.
\item There is an equivalence of monoidal categories (Lemma~\ref{lem:monoidal_equivalence}) intertwining $\star$ and $\star'$ (resp. $\star''$), sending  the object $\bbS$ to $\bbS'$ (resp. $\bbS''$).
\end{enumerate}
\end{prop}

\subsection{Twisted product of rational sections}
\label{subsec:Sproductrational}
We will perform calculations of rational sections in $\bbS$ and their variant forms with $'$ and $''$. 
To keep track of the degrees in $W\times W^\dyn$, a rational section $a$ of $\bbS_{w,v}$ is denoted by $a\de_w\de_v^\dyn$. 
The functor sending a coherent sheaf to the vector space of  rational sections is a Lax monoidal functor with respect to the tensor product $\tensor$ in Lemma \ref{lem:tensor}. 
Indeed, one has a stronger version, which we recall for illustration purpose although not used in the present paper.
Let $\pi^2_*: \catA\times\catA^\vee\to \catA/W\times\catA^\vee/W^\dyn$ be the projection. Then, the functor $\pi^2_*$ is a Lax monoidal functor with the target endowed with the usual tensor structure of coherent sheaves \cite[Lemma~2.3]{ZZ23}.
In particular, we have for example \[\pi^2_*\bbS'_{w,v}\otimes \pi^2_*\bbS'_{x,y}\to \pi^2_*\bbS'_{wx,vy},\] and similarly 
\[\pi^2_*\bbS'_{w,v}\to  \bigoplus_x\sHom(\pi^2_*\bbM_x, \pi^2_*\bbM_{wx}).\]

The space of rational sections of $\bbS_{w,v}$  has an algebra structure, which
 can be described using Kostant-Kumar's twisted product:
if  $a\de_{w_1}\de^\dyn_{ v_1}$,  and $b\de_{w_2}\de_{v_2}^\dyn$ are rational sections of  $\bbS_{w_1,v_1}$ and $\bbS_{w_2,v_2}$, respectively, then the tensor product  $\tensor$ gives
\begin{equation}\label{eq:prod}
a\de_{w_1}\de^\dyn_{ v_1}b\de_{w_2}\de_{v_2}^\dyn=a\cdot {}^{w_1v_1^\dyn}b\de_{w_1w_2}\de_{v_1v_2}^\dyn,
\end{equation}
which is a rational section   of $\bbS_{w_1w_2,v_1v_2}$ via $\bbS_{w_1,v_2}\tensor\bbS_{w_2,v_2}\overset\sim\longrightarrow \bbS_{w_1w_2,v_1v_2}$.  

If $a\de_{w}\de_v^\dyn$ is a rational section of $\bbS_{w,v}$, then $\de_{w}\cdot {}^{w^{-1}}a\cdot\de_{v}^\dyn$ can be viewed as a rational section of $w^*\bbS_{w, v}=\bbS'_{w,v}$ (again, $\de_w,\de_v^\dyn$ can be thought of as the bi-degree of this rational section). Then in terms of rational sections, the multiplication of $\bbS'_{w,v}$ under $\stone$ can be written as
 \[
\de_{w_1}{}^{w_1^{-1}}a\de_{v_1}^\dyn\de_{w_2}{}^{w_2^{-1}}b\de_{v_2}^\dyn=\de_{w_1w_2}{}^{w_2^{-1}w_1^{-1}}a\cdot {}^{v_1^\dyn w_2^{-1}}b\de_{v_1v_2}^{\dyn}.
\]
This agrees with \eqref{eq:prod}. 

The same remark holds for $\bbS''$,  that is, a rational section $a\de_{w}\de_{v}^\dyn$ of $\bbS_{w,v}$ defines a rational section $\de_v^\dyn \cdot{}^{(v^{-1})^\dyn}a\cdot\de_w$ of $\bbS''_{w,v}$, and \[
\de_{v_1}^\dyn {}^{(v_1^{-1})^\dyn}a\de_{w_1}\de_{v_2}^\dyn {}^{(v_2^{-1})^\dyn}b\de_{w_2}=\de_{v_1v_2}^\dyn  {}^{(v_2^{-1}v_1^{-1})^\dyn} a \cdot {}^{w_1(v_2^{-1})^\dyn}b\de_{w_1w_2},
\]
which again coincides with the result from \eqref{eq:prod}. 

\label{subsec:rat_sec}
In the same spirit, we spell out the action of $\bbS'$ on $\bbM$ from \eqref{eq:periodicaction} on the space of rational sections. Rational sections of $\bbM$ are finite sums of $a f_x$ where $a$ is a rational section of $\bbM_x=x^*\bbL$. Again, $f_x$ is here to keep track of the degree of $a$.  Then the action  can be written as 
\begin{equation}
\de_wb\de_v^\dyn \bullet af_x={}^{v^\dyn }a\cdot {}^{x^{-1}}bf_{wx}. 
\end{equation}
One can check that this  provides the same formula if we start from the variant forms of $\bbS'$, that is, we can also start from $\bbS$ or $\bbS''$. For example, if $\de_v^\dyn b\de_w$ is a rational section of $\bbS''_{w,v}$, the $\bullet$ action would give
\begin{equation}\label{eq:bullet}
\de_v^\dyn b\de_w\bullet af_x={}^{v^\dyn (wx)^{-1}}b\cdot {}^{v^\dyn}a f_{wx}. 
\end{equation}

Similarly, from \eqref{eq:periodicactiondyn} the action of of $\bbS''$ on $\bbM^\dyn$ can be described in the same way. Rational sections of $\bbM^\dyn_y$ are written as finite sums of  $a f_y^\dyn$, and given a rational section $\de_v^\dyn b\de_w$ of $\bbS''$, the action is written as 
\begin{equation}\label{eq:bulletdyn}
\de_v^\dyn b\de_w\bullet^\dyn af_y^\dyn=    {}^{(y^{-1})^\dyn}b\cdot{}^wa f^\dyn_{{vy}}. 
\end{equation}
If one takes a rational section $\de_w b\de_v^{\dyn}$ of $\bbS'$, the action would be written as 
\begin{equation}
\de_w b\de_v^\dyn\bullet^\dyn af_y^\dyn={}^w a\cdot {}^{((vy)^{-1})^\dyn w}bf_{vy}^\dyn. 
\end{equation}

\subsection{The  DL operators}\label{subsec:DL1}

Let $z\in \catA, \la\in \catA^\vee$. For any simple root $\al$, define the  Demazure-Lusztig (DL) operators with dynamical parameters by 
\begin{align}
\label{eq:Tzla}  T_{\al}^{z,\la}&=\de_\al^\dyn\frac{\theta(\hbar)\theta(z_\al+\la_{\al^\vee})}{\theta(z_\al)\theta(\hbar+\la_{\al^\vee})}+\de_\al^{\dyn}\frac{\theta(\hbar-z_\al)\theta(-\la_{\al^\vee})}{\theta(z_\al)\theta(\hbar+\la_{\al^\vee})}\de_\al , \\
\label{eq:Tzladyn} 
T_\al^{z,\la, \dyn}&=\de_\al\frac{\theta(\hbar)\theta(\la_{\al^\vee}+z_\al)}{\theta(\la_{\al^\vee})\theta(\hbar-z_{\al})}+\de_\al\frac{\theta(\hbar+\la_{\al^\vee})\theta(-z_{\al})}{\theta(\la_{\al^\vee})\theta(\hbar-z_{\al})}\de_\al^{\dyn}.
\end{align}

\begin{theorem}\label{thm:Trational}
The operators $T^{z,\la}_\al$  and $T^{z,\la,\dyn}_\al$  are  rational sections of $\bbS''_{}$ and $\bbS'_{}$, respectively. 
\end{theorem}

\begin{proof}  We only prove the conclusion for $T^{z,\la}_\al$ as an example. By Lemma \ref{lem:monoidal_equivalence} and \S\ref{subsec:Sproductrational}, it suffices to show that its variant form (move $\de_\al^\dyn$ to the right of the coefficients)
\begin{equation}\label{eq:T-in-bbS}
\frac{\theta(\hbar)\theta(z_\al-\la_{\al^\vee})}{\theta(z_\al)\theta(\hbar-\la_{\al^\vee})}\de_\al^{\dyn}+\frac{\theta(\hbar-z_\al)\theta(\la_{\al^\vee})}{\theta(z_\al)\theta(\hbar-\la_{\al^\vee})}\de_\al \de_\al^{\dyn}\end{equation}
 is a rational section of $\bbS$. In other words,  we show that the coefficient of $\de_\al^\dyn$ is a rational section of $\bbS_{e,s_\al}$ and that of $\de_\al\de_\al^\dyn$ is a rational section of $\bbS_{\al,\al}$.  Using Lemma \ref{lem:basicS}.\eqref{item:basicS1}  below,  
\begin{align*}
\bbS_{e,s_\al}|_z&=\calO(z_\al \al^\vee-\hbar\al^\vee), & \bbS_{e,s_\al}|_\la&=\calO(\la_{\al^\vee}\al).\\
\bbS_{s_\al,s_\al}|_z&=\calO(-\hbar\al^\vee), &\bbS_{s_\al,s_\al}|_\la&=\calO(\hbar\al). 
\end{align*}

Let us consider the fraction $\frac{\theta(\hbar)\theta(z_\al-\la_{\al^\vee})}{\theta(z_\al)\theta(\hbar-\la_{\al^\vee})}$ in \eqref{eq:T-in-bbS}. When restricting $\bbS_{e,s_\al}$ to $z$, one obtains a line bundle over $\catA^\vee$ and  $\la$ becomes the only variable. Since $\la_{\al^\vee}$ has a zero at $z_\al$ and a pole at $\hbar$, so by Proposition \ref{prop:E_zero_pole} it is a rational section of $\calO(z_\al-\hbar)$ on $E$.  Pulling back to $\catA^\vee$ via the map $\chi_{\al^\vee}$, we see that the fraction is a rational section of $\bbS_{e,s_\al}|_z=\calO((z_\al-\hbar)\al^\vee)$. Now restrict to $\la$, then $z\in \catA$ is the only variable, and this fraction has a zero at $\la_{\al^\vee}$ and a pole at $0$ on $E$. Pulling back to $\catA$, we see that it is a rational section of $\bbS_{e,s_\al}|_\la$. By the See-Saw Lemma, we see that the fraction is a rational section of $\bbS_{e,s_\al}$. 

For simplicity, we call the above analysis the `zero-poles' property. 
For the second fraction $\frac{\theta(\hbar-z_\al)\theta(\la_{\al^\vee})}{\theta(z_\al)\theta(\hbar-\la_{\al^\vee})}$, we can see that the `zeros-poles' for $\la$ is $-\hbar\al^\vee$, and the `zeros-poles' for $z$ is $\hbar\al$, so it is a rational section of $\bbS_{s_\al,s_\al}$. 
\end{proof}

\begin{remark} In
Lemma \ref{lem:i*T} below, we explain that \eqref{eq:Tzladyn} is identified with the elliptic DL operator for the Langlands dual system. More properties about  these  operators  are proved in \S~ \ref{sec:DL} below.
\end{remark}

\subsection{Main theorem} \label{subsec:map}
It is proved in \cite{RW20} that both operators for any $\alpha\in\Sigma$ are self-inverse, and that the braid relations are satisfied (see also \cite[Proposition~4.11]{ZZ22} for the present form). 
Given a reduced sequence of $v\in W$, by using the twisted product in \S~\ref{subsec:Sproductrational},  
 one define $T_v^{z,\la}$ and $T_v^{z,\la,\dyn}$, which are rational sections of $\bbS''$ and $\bbS'$, respectively. 

\begin{definition}\label{def:bbT}
We define the Demazure-Lusztig (DL) coefficients $a_{v,w}^{z,\la}$ (resp. $a_{v,w}^{z,\la,\dyn}$) as  rational sections of $\bbS_{w,v}''$  (resp. $\bbS'_{w,v}$) via the following formula
\begin{align}\label{eq:Tlinearcomb}
T_v^{z,\la}&=\sum_{w\le v}\de_v^\dyn a_{v,w}^{z,\la}\de_w, & T_v^{z,\la,\dyn}&=\sum_{w\le v}\de_v a_{v,w}^{z,\la,\dyn}\de_w^\dyn.
\end{align}
Define the rational maps 
\begin{align*}
\bbT^{z, \la}=\sum_vT_v^{z,\la}=\sum_{v,w} \fp^\dyn_v a^{z,\la}_{v,w}\fri_{w^{-1}}: \bbM=\bigoplus_{w}\bbM_{w^{-1}}\dashrightarrow \bigoplus_{v}\bbM_{v}^\dyn=\bbM^\dyn.
\end{align*}
\begin{align*}
\bbT^{ z,  \la, \dyn}=\sum_vT_v^{z,\la,\dyn}&=\sum_{v,w}\fp_v a^{z,\la, \dyn}_{v,u}\fri_{u^{-1}}^\dyn: \bbM^\dyn=\bigoplus_u\bbM_{u^{-1}}^\dyn\dashrightarrow \bigoplus_v \bbM_v= \bbM.
\end{align*}
\end{definition}
Here recall we have embeddings and projections
\[\xymatrix{\bbM\ar@/^.5pc/[r]^-{\fp_w}&\ar@/^.5pc/[l]^-{\fri_w} \bbM_w},\quad  \xymatrix{\bbM^\dyn\ar@/^.5pc/[r]^-{\fp^\dyn_w}&\ar@/^.5pc/[l]^-{\fri^\dyn_w} \bbM^\dyn_w} .\]
That is, $\bbT^{z, \la}$ is defined with $ a^{z,\la}_{v,w}$ as its $(v,w)$-th matrix coefficient. 
By the definition, we have the isomorphism
\[
\bbS''_{w,v}=v^\dynst\bbL\otimes (w^{-1})^*\bbL^{-1}\cong \sHom(\bbM_{w^{-1}}, \bbM_v^\dyn),
\]
which shows that a rational section $\de_v^\dyn a\de_w$ of $\bbS''_{w,v}$ is sent to $\fp_v^\dyn a \fri_{w^{-1}}$ as a rational section of $\sHom(\bbM,\bbM^\dyn)$. 
Since  $ a^{z,\la}_{v,w}$ is a rational section of $\bbS_{w,v}''$, so it  defines an rational map of line bundles \[\bbM_{w^{-1}}\dashrightarrow\bbM^\dyn_v.\]
Similar for $\bbT^{ z,  \la, \dyn}$, which uses 
\[
\bbS'_{w,v}=w^*\bbL\otimes (v^{-1})^\dynst\bbL^{-1}\cong \sHom(\bbM_{v^{-1}}^\dyn, \bbM_w),
\]
where a rational section $\de_wa\de_v^\dyn$ of $\bbS'_{w,v}$ is sent to $\fp_w a\fri^\dyn_{v^{-1}}$ as a rational section of $\sHom(\bbM^\dyn, \bbM)$. 

We are now ready to state the first main result of the present paper. 
\begin{theorem}
\label{thm:inverse_maps}
The following rational maps of vector bundles on $\catA\times\catA^\vee$ are inverses to each other
\begin{align*}
\bbT^{z,\la}&:\bbM\dashrightarrow\bbM^\dyn, & \bbT^{z,\la,\dyn}: \bbM^\dyn\dashrightarrow \bbM.
\end{align*}
\end{theorem}

Recall that (Remark~\ref{rmk:ell_coh}) $\bbM$ and $\bbM^\dyn$ are models of equivariant elliptic cohomologies of $T^*G/B$ and $T^*G^\vee/B^\vee$ respectively. Hence, the content of the theorem is two-fold. The matrix with DL-coefficients induce explicit rational isomorphism between the two cohomology spaces. Moreover, the matrix with DL-coefficients is inverse to the matrix with Langlands dual DL-coefficients. See Theorem~\ref{thm:invmat} for a reformulation in these terms. The proofs of both the reformulation Theorem~\ref{thm:invmat} as well as its equivalence with Theorem~\ref{thm:inverse_maps} are given in Section \S~\ref{sec:ell_class}. Elliptic classes and their Poincar\'e duality, to be developed in the next few sections, make the proofs easier. Moreover, in Theorem~\ref{thm:main}, the behaviour of $\bbT^{z,\la}$ and $ \bbT^{z,\la,\dyn}$ on elliptic classes are specified.

\section{Basic properties of the Poincar\'e line bundle}\label{sec:S_property}

In this section, for the convenience of the readers, we collect some basic properties and the restrictions of  the Poincar\'e line bundle, its shift $\bbL$, and also the twisted group algebra $\bbS$. Some of the properties are used in the proof of Theorem~\ref{thm:Trational} above, and again in later sections. 

\subsection{Weyl group actions on $\calP$}
We first fix some notations.
For the bundle $\calO(z)$ on $\catA^\vee$, denote by $\tilde\calO(z)$ its pull back to $\catA\times \catA^\vee$. Similarly, denote by $\tilde\calO(\la)$ the pull back of $\calO(\la)$ on $\catA$ to $\catA\times \catA^\vee$. The following lemma 
  collects some elementary facts about line bundles, whose proofs are straightforward.

\begin{lemma}\label{lem:basic-bundle}
\begin{enumerate}
\item \label{item:basic-bundle1}For any $ z,z'\in \catA, \la, \la'\in \catA^\vee,  $ we have
\[
\tilde\calO(z)|_{z'}=\calO(z), ~ \tilde\calO(z)|_\la=\calO_{\catA},~\tilde\calO(\la)|_{\la'}=\calO(\la), ~ \tilde\calO(\la)|_z=\calO_{\catA^\vee}.
\]	
\item \label{item:basic-bundle2}For any $v\in W$, we have
\begin{align*}
v^*\tilde\calO(z)&=\tilde\calO(z), & v^{\dyn*}\tilde\calO(z)&=(v^{\dyn*}\calO(z))^\sim=\tilde\calO(v^{-1}(z)), \\
v^{\dyn*}\tilde\calO(\la)&=\tilde\calO(\la), & v^{*}\tilde\calO(\la)&=(v^{*}\calO(\la))^\sim=\tilde\calO(v^{-1}(\la)).
\end{align*}
\item \label{item:basic-bundle3}Let $\phi:\catA\to \catA$, $\phi^\dyn:\catA^\vee\to \catA^\vee$ be maps of varieties.  For any sheaf $\calF$ on $\catA\times \catA^\vee$, we have 
\[
(\phi^*\calF)|_{\la}=\phi^*(\calF|_\la), ~(\phi^*\calF)|_z=\calF|_{\phi(z)}, ~(\phi^{\dyn*}\calF)|_\la=\calF|_{\phi^\dyn(\la)}, ~(\phi^{\dyn*}\calF)|_z=\phi^{\dyn*}(\calF|_z).
\]
\end{enumerate}
\end{lemma}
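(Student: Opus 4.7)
The plan is to prove all three parts by unraveling the definitions and invoking standard functoriality of pullback, together with the explicit description of $\calO(z)$ and $\calO(\la)$ via the character maps introduced in \S~\ref{subsec:rootsys}. First I would rewrite $\tilde\calO(z) = p_2^*\calO(z)$ and $\tilde\calO(\la) = p_1^*\calO(\la)$, where $p_1, p_2$ denote the two projections from $\catA \times \catA^\vee$.

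For part (\ref{item:basic-bundle1}), all four identities are then immediate from this description: restriction of $p_i^*\calF$ along the inclusion of a fiber of the opposite projection yields $\calF$ itself, whereas restriction along a fiber collapsed by $p_i$ yields the trivial sheaf on that fiber.

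For part (\ref{item:basic-bundle2}), the two identities with trivial effect (namely $v^*\tilde\calO(z)=\tilde\calO(z)$ and $v^{\dyn*}\tilde\calO(\la)=\tilde\calO(\la)$) follow because the corresponding Weyl group factor acts trivially on the factor from which the bundle is pulled back. For the nontrivial ones, I would use that for $z = \mu^\vee \otimes t$ with $\mu^\vee\in X_*$ and $t \in E$ we have $\calO(z) = \chi_{\mu^\vee}^*\calO(t)$, and then reduce the statement to verifying the equality of maps $\chi_{\mu^\vee}\circ v^\dyn = \chi_{v^{-1}\mu^\vee}$ on $\catA^\vee$; this in turn is a direct consequence of the $W$-invariance of the perfect pairing between $X^*$ and $X_*$. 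The symmetric identity for $\tilde\calO(\la)$ follows in exactly the same way, interchanging the roles of the two factors.

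For part (\ref{item:basic-bundle3}), each identity is an instance of base change applied to the obvious commutative square formed by $\phi \times \id$ (resp.\ $\id\times \phi^\dyn$) and the closed immersion of a fiber; the only thing to keep track of is whether the fiber lies in the source factor of $\phi$, in which case an evaluation at $\phi(z)$ (resp.\ $\phi^\dyn(\la)$) appears, or in the factor left untouched by $\phi$, in which case $\phi^*$ (resp.\ $\phi^{\dyn*}$) passes through the restriction unchanged. No serious obstacle arises; the only conceptual input beyond formal functoriality is the Weyl-equivariance of the pairing used in part (\ref{item:basic-bundle2}).
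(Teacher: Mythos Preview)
Your argument is correct and is exactly the kind of unwinding of definitions the paper has in mind when it declares the proofs ``straightforward'' and omits them; the only point worth noting is that in part~(\ref{item:basic-bundle2}) your verification at special points $z=\mu^\vee\otimes t$ extends to arbitrary $z\in\catA$ by linearity, since both $z\mapsto (v^{\dyn})^*\calO(z)$ and $z\mapsto \calO(v^{-1}z)$ are group homomorphisms $\catA\to\Pic^0(\catA^\vee)$.
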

Recall the convention of Poincar\'e line bundle in \S~\ref{subsec:rho-shift}, which has the property that
\[
\calP|_z=\calO(z-\hbar\rho^\vee), ~\calP|_\la=\calO(\la).
\]
The maps $w^{-1}$ and $w^{\dyn}$ from $\catA\times \catA^\vee$ to itself are related by the following property:
\begin{lemma}For any $w\in W$, we have
 $(w^{-1})^*\calP=w^{\dyn*}\calP\otimes \tilde \calO(w^{-1}\hbar\rho^\vee-\hbar\rho^\vee).$
\end{lemma}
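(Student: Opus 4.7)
The plan is to invoke the See-Saw Lemma mentioned in \S~\ref{subsec:rho-shift}, comparing $(w^{-1})^*\calP$ with $w^{\dyn*}\calP \otimes \tilde\calO(w^{-1}\hbar\rho^\vee - \hbar\rho^\vee)$ by restricting both to two families of slices of $\catA \times \catA^\vee$: the horizontal slices $\catA \times \{\lambda\}$ for every $\lambda$, and one vertical slice $\{z\} \times \catA^\vee$ with $z = \hbar\rho^\vee$ chosen to exploit the normalization of $\calP$.

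First I would handle the horizontal slices. Using Lemma~\ref{lem:basic-bundle}(\ref{item:basic-bundle3}) with $\phi = w^{-1}$ acting on the first factor, $(w^{-1})^*\calP|_{\catA \times \lambda} = (w^{-1})^*\bigl(\calP|_{\catA \times \lambda}\bigr) = (w^{-1})^*\calO(\lambda)$, which by Lemma~\ref{lem:basic-bundle}(\ref{item:basic-bundle2}) equals $\calO(w^\dyn \lambda)$. On the other side, the same lemma with $\phi^\dyn = w^\dyn$ on the second factor gives $w^{\dyn*}\calP|_{\catA \times \lambda} = \calP|_{\catA \times w^\dyn \lambda} = \calO(w^\dyn \lambda)$. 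The tilde factor $\tilde\calO(w^{-1}\hbar\rho^\vee - \hbar\rho^\vee)$ is pulled back from $\catA^\vee$ and therefore trivial on every horizontal slice by Lemma~\ref{lem:basic-bundle}(\ref{item:basic-bundle1}). So the two sides agree on all $\catA \times \{\lambda\}$.

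Next I would restrict to $\{\hbar\rho^\vee\} \times \catA^\vee$. For the left-hand side, Lemma~\ref{lem:basic-bundle}(\ref{item:basic-bundle3}) yields $(w^{-1})^*\calP|_{\hbar\rho^\vee \times \catA^\vee} = \calP|_{w^{-1}\hbar\rho^\vee \times \catA^\vee} = \calO(w^{-1}\hbar\rho^\vee - \hbar\rho^\vee)$, using the formula $\calP|_z = \calO(z - \hbar\rho^\vee)$ from the $\rho$-shifted normalization. For the right-hand side, the same lemma gives $w^{\dyn*}\calP|_{\hbar\rho^\vee \times \catA^\vee} = w^{\dyn*}\bigl(\calP|_{\hbar\rho^\vee \times \catA^\vee}\bigr) = w^{\dyn*}\calO = \calO$, while the tilde factor restricts to $\calO(w^{-1}\hbar\rho^\vee - \hbar\rho^\vee)$. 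Both sides again agree, so the See-Saw Lemma closes the argument.

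There is no real obstacle here, only bookkeeping: one must keep straight the two ways the Weyl group enters, namely the \emph{spectral} action $w$ on $\catA$ (whose induced map on $\Pic^0(\catA) \cong \catA^\vee$ is $(w^{-1})^\dyn$, via the argument in the proof of Lemma~\ref{lem:basic-bundle}(\ref{item:basic-bundle2})) versus the \emph{dynamical} action $w^\dyn$ on $\catA^\vee$, and to track that the asymmetry between these two actions is precisely what the $\rho$-shift in the normalization of $\calP$ measures. The identity $w^{-1}\hbar\rho^\vee - \hbar\rho^\vee$ on the right is exactly the discrepancy between $w^{-1}$ and the dotted reflection $(w^{-1})_\bullet$ that appears in \eqref{eqn:Pincare_dot}, which is the conceptual reason the correction term takes this form.
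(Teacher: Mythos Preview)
Your proof is correct and follows essentially the same approach as the paper: both compute the restrictions of the two sides to $\catA\times\{\lambda\}$ for all $\lambda$ and to $\{\hbar\rho^\vee\}\times\catA^\vee$, then invoke the See-Saw Lemma. The only cosmetic difference is that you write $w^\dyn\lambda$ where the paper writes $w(\lambda)$, and you append a conceptual remark about the dotted action that the paper omits.
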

\begin{proof}
By using Lemma \ref{lem:basic-bundle}, we get 
\begin{align*}
(w^{\dyn*}\calP)|_{\hbar\rho^\vee}&=w^{\dyn*}(\calP|_{\hbar\rho^\vee})=w^{\dyn*}\calO_{\catA^\vee}=\calO_{\catA^\vee}, \\
 (w^{\dyn*}\calP)|_\la&=\calP|_{w(\la)}=\calO(w(\la)),\\
((w^{-1})^*\calP)|_{\hbar\rho^\vee}&=\calP|_{w^{-1}(\hbar\rho^\vee)}=\calO(w^{-1}(\hbar\rho^\vee)-\hbar\rho^\vee)=\left(w^\dynst \calP\otimes \tilde\calO(w^{-1}(\hbar\rho^\vee)-\hbar\rho^\vee)\right)|_{\hbar\rho^\vee},\\ ((w^{-1})^*\calP)|_\la&=(w^{-1})^*(\calP|_\la)=(w^{-1})^*\calO(\la)=\calO(w(\la))=\left(w^\dynst \calP\otimes \tilde\calO(w^{-1}(\hbar\rho^\vee)-\hbar\rho^\vee)\right)|_{\la}.
\end{align*}
Therefore, by the See-Saw Lemma, 
\[
(w^{-1})^*\calP=w^{\dyn*}\calP\otimes \tilde \calO(w^{-1}(\hbar\rho^\vee)-\hbar\rho^\vee).
\]
\end{proof}

\subsection{Weyl group actions on $\bbL$}

By the definition of $\bbL$ in Definition \ref{def:bbL} and Lemma \ref{lem:basic-bundle}, it is easy to see that 
\begin{align}\label{eq:Lres}
\bbL|_\la&=\calP|_{\la+\rho\hbar}=\calO(\la+\rho \hbar), & \bbL|_{z }&=\calO(z-\hbar\rho^\vee).
\end{align}
We define a new action of $W$ on $\catA^\vee$ by 
\[w_\bullet^\dyn \la=w(\la+\rho\hbar)-\rho\hbar, ~\la\in \catA^\vee.\]
We then have 
\begin{lemma} For any $w\in W$, we have isomorphisms
\[
w_\bullet^*\calO(\la)=w^*\calO(\la), \quad w_\bullet^{\dyn*}\calO(z)=w^{\dyn*}\calO(z), \quad \forall z\in \catA, \la\in \catA^\vee.
\]
\end{lemma}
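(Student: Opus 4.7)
The plan is to exhibit the dotted actions as compositions of the ordinary Weyl group actions with translations on the abelian varieties, and then invoke the translation-invariance of degree-zero line bundles.

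First I would unravel the definitions. Writing $t_a$ for translation by $a \in \catA$, the identity
\[w_\bullet z = w(z - \hbar\rho^\vee) + \hbar\rho^\vee = w z + (\hbar\rho^\vee - w\hbar\rho^\vee)\]
shows $w_\bullet = t_{(1-w)\hbar\rho^\vee} \circ w$ as an endomorphism of $\catA$. Symmetrically, $w_\bullet^\dyn = t_{-(1-w)\rho\hbar} \circ w^\dyn$ as an endomorphism of $\catA^\vee$. Note $\hbar\rho^\vee \in \catA$ and $\rho\hbar \in \catA^\vee$, so these really are translations on the correct factors.

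The key input is then the standard fact that any line bundle $\mathcal{M} \in \Pic^0(\catA)$ is translation-invariant: for every $a \in \catA$, there is an isomorphism $t_a^* \mathcal{M} \cong \mathcal{M}$ (this is a direct consequence of the theorem of the square for abelian varieties, and in fact is precisely what characterizes $\Pic^0$). Since $\calO(\la) \in \Pic^0(\catA)$ by the construction recalled in \S\ref{subsec:rootsys}, applying this with $a = (1-w)\hbar\rho^\vee$ gives
\[w_\bullet^* \calO(\la) \;=\; w^* \, t^*_{(1-w)\hbar\rho^\vee} \calO(\la) \;\cong\; w^* \calO(\la).\]
The dynamical statement follows by the same argument with $\catA$ replaced by $\catA^\vee$, $w$ by $w^\dyn$, and $\calO(\la)$ by $\calO(z) \in \Pic^0(\catA^\vee)$.

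There is no genuine obstacle in this argument; it is a direct application of translation invariance of $\Pic^0$. The only thing to keep track of is which factor ($\catA$ or $\catA^\vee$) one is translating on, and that the translation parameters $(1-w)\hbar\rho^\vee$ and $(1-w)\rho\hbar$ do lie in the relevant abelian variety (using that $\rho^\vee \in X_*$ and $\rho \in X^*$, possibly after tensoring with $\frac{1}{2}\Z$ in the non-simply-laced case, which is harmless since an integer multiple of the translation suffices by passing to $n$-torsion arguments if needed).
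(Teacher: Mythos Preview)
Your proposal is correct and takes essentially the same approach as the paper: the paper's proof is the one-line remark that degree-zero line bundles are translation-invariant, and you have simply made explicit the factorization $w_\bullet = t_{(1-w)\hbar\rho^\vee}\circ w$ (and its dynamical analogue) before invoking that fact. The final caveat about $\rho^\vee$ possibly being half-integral and ``passing to $n$-torsion arguments'' is unnecessary and a bit muddled; the only thing that matters is that $w_\bullet$ and $w$ differ by a translation of the abelian variety, and translation-invariance of $\Pic^0$ applies to any translation.
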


\begin{proof}They follow from the fact that degree 0 line bundles are invariant under translations. 
\end{proof}

The following lemma shows that $\bbL$ behaves well in terms of the compatibility of the two $\bullet$-actions. 
\begin{lemma}\label{lem:bullettransfer}
For any $w\in W$, we have a canonical isomorphism
\[
(w_\bullet^{-1})^*\bbL\cong w_\bullet^{\dyn*}\bbL,
\]
which is compatible with composition of group elements. 
\end{lemma}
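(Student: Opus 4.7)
The plan is to bootstrap from the identity \eqref{eqn:Pincare_dot} for the Poincar\'e line bundle $\calP$, transported through the shift $\vor$ used to define $\bbL$. Two strict intertwining relations do essentially all the work: a direct calculation shows $(w^{-1})_\bullet = (w_\bullet)^{-1}$ (both send $z$ to $w^{-1}z + \hbar\rho^\vee - w^{-1}\hbar\rho^\vee$), and $\vor\circ w_\bullet^\dyn = w^\dyn\circ\vor$, the latter since $\vor(w_\bullet^\dyn \la) = w(\la+\rho\hbar) - \rho\hbar + \rho\hbar = w\cdot \vor(\la)$.

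Substituting $w\mapsto w^{-1}$ in \eqref{eqn:Pincare_dot} yields $(w_\bullet^{-1}\times 1)^*\calP \cong (1\times w^\dyn)^*\calP$ on $\catA\times\catA^\vee$. Applying $(1\times\vor)^*$, using $\bbL=(1\times\vor)^*\calP$, and commuting pullbacks along disjoint factors, I would compute
\begin{align*}
(w_\bullet^{-1})^*\bbL &= (1\times\vor)^*(w_\bullet^{-1}\times 1)^*\calP \\
&\cong (1\times\vor)^*(1\times w^\dyn)^*\calP \\
&= (1\times w^\dyn\!\circ\vor)^*\calP \\
&= (1\times \vor\!\circ w_\bullet^\dyn)^*\calP = w_\bullet^{\dyn*}\bbL.
\end{align*}
The single non-strict step (marked $\cong$) is exactly the isomorphism \eqref{eqn:Pincare_dot}; all other steps are strict equalities of morphisms between $\catA\times\catA^\vee$ and itself.

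For compatibility with composition of group elements, the point is that the only non-strict link in the chain above comes from Lemma~\ref{lem:Pic_Poincare}, where the isomorphism is already asserted to be compatible with compositions. Together with the strict relations $(w_1 w_2)_\bullet = (w_1)_\bullet (w_2)_\bullet$ and the functoriality of pullback by $\vor$ on the second factor, this reduces the compatibility statement for our isomorphism to the one already packaged in Lemma~\ref{lem:Pic_Poincare}. The main obstacle, such as it is, will be purely notational bookkeeping: tracing through the chain for $w_1 w_2$ and comparing it to the two-step version; I do not anticipate any conceptual difficulty.
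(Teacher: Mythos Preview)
Your proof is correct and follows essentially the same approach as the paper's own proof, which tersely invokes the definitions of $w_\bullet$ and $\bbL$ together with the canonical isomorphism \eqref{eqn:Pincare_dot}. You have simply unpacked those ingredients explicitly, including the intertwining relation $\vor\circ w_\bullet^\dyn = w^\dyn\circ\vor$ that makes the passage from $\calP$ to $\bbL$ go through.
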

\begin{proof}
This follows direct from the definition of $w_\bullet^*$, the definition of $\bbL$, as well as the canonical functorial isomorphism \eqref{eqn:Pincare_dot}.
\end{proof}
\begin{remark}
We may also use Lemma \ref{lem:basic-bundle} to compute the restrictions:
\begin{align*}
((w_\bullet^{-1})^{*}\bbL)|_{\la}&=(w_\bullet^{-1})^{*}(\bbL|_{\la})\overset{\eqref{eq:Lres}}=(w^{-1}_\bullet)^{*}\calO(\lambda+\hbar\rho)=(w^{-1})^*\calO(\la+\hbar\rho)=\calO(w(\lambda+\hbar\rho)),\\
(w_\bullet^{-1})^{*}\bbL|_{{\hbar\rho^\vee}}&=\bbL|_{w^{-1}_\bullet\hbar\rho^\vee}=\bbL|_{{\hbar\rho^\vee}}\overset{\eqref{eq:Lres}}=\calO_{\catA^\vee},\\
(w_\bullet^{\dyn*}\bbL)|_{\la}&=\bbL|_{w_\bullet\lambda}\overset{\eqref{eq:Lres}}=\calO(w_\bullet\lambda+\hbar\rho)=\calO(w(\lambda+\hbar\rho)),\\
(w_\bullet^{\dyn*}\bbL)|_{{\hbar\rho^\vee}}&=w_\bullet^{\dyn*}(\bbL|_{\hbar\rho^\vee})\overset{\eqref{eq:Lres}}=w_\bullet^{\dyn*}\calO_{\catA^\vee}=\calO_{\catA^\vee}.
\end{align*}
This also shows the existence of an isomorphism
$w^{\dyn*}_\bullet\bbL\cong (w^{-1}_\bullet)^*\bbL.$ However, Lemma~\ref{lem:bullettransfer} above provides a canonical isomorphism. 
\end{remark}

The following lemma explains how $\bbL$ behaves with the maps on $\catA\times \catA^\vee$. 
\begin{lemma}\label{lem:act}\begin{enumerate}
\item $
w^{\dyn*}\bbL\cong\bbL\otimes w^{\dyn*}\calP\otimes \calP^{-1}.$
\item $w^*\bbL\cong(w^{-1}_\bullet)^{\dyn*}\bbL\otimes\tilde\calO(w\hbar\rho^\vee-\hbar\rho^\vee)\cong \bbL\otimes (w_\bullet^{-1})^{\dyn*}\calP\otimes \calP^{-1} \otimes\tilde\calO(w\hbar\rho^\vee-\hbar\rho^\vee).
$
\end{enumerate}
\end{lemma}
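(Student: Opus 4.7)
The plan is to prove both parts by the See-Saw Lemma (stated in \S\ref{subsec:rho-shift}): I will compare restrictions of each claimed pair of line bundles to $\catA\times\{\lambda\}$ for every $\lambda\in\catA^\vee$ and to $\{z\}\times\catA^\vee$ for some $z\in\catA$, using Lemma~\ref{lem:basic-bundle}, the restriction formulas \eqref{eq:Lres}, and the normalization $\calP|_z=\calO(z-\hbar\rho^\vee)$, $\calP|_\lambda=\calO(\lambda)$. This is the natural tool given the way the rest of the section is organized.

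For part (1), on $\catA\times\{\lambda\}$ the left side is $w^{\dyn*}\bbL|_\lambda=\bbL|_{w\lambda}=\calO(w\lambda+\rho\hbar)$ by Lemma~\ref{lem:basic-bundle}(3) and \eqref{eq:Lres}, while the right side is $\calO(\lambda+\rho\hbar)\otimes\calO(w\lambda)\otimes\calO(-\lambda)=\calO(w\lambda+\rho\hbar)$. On $\{z\}\times\catA^\vee$ the left side is $w^{\dyn*}(\bbL|_z)=w^{\dyn*}\calO(z-\hbar\rho^\vee)$ by Lemma~\ref{lem:basic-bundle}(3), and on the right $\bbL|_z\otimes\calP^{-1}|_z=\calO(z-\hbar\rho^\vee)\otimes\calO(\hbar\rho^\vee-z)=\calO_{\catA^\vee}$, so the product collapses to the same $w^{\dyn*}\calO(z-\hbar\rho^\vee)$.

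For the first isomorphism of part (2), the key point is that the dotted dynamical map $(w^{-1}_\bullet)^\dyn\lambda=w^{-1}\lambda+(w^{-1}-1)\rho\hbar$ differs from the linear $(w^{-1})^\dyn$ by translation on $\catA^\vee$, which acts trivially on degree $0$ line bundles. On $\catA\times\{\lambda\}$: the left side $(w^*\bbL)|_\lambda=w^*\calO(\lambda+\rho\hbar)=\calO(w^{-1}(\lambda+\rho\hbar))$, while the right side gives $\bbL|_{(w^{-1}_\bullet)^\dyn\lambda}\otimes\calO_\catA=\calO(w^{-1}\lambda+w^{-1}\rho\hbar)$, matching precisely, and the corrective factor $\tilde\calO(w\hbar\rho^\vee-\hbar\rho^\vee)$ restricts to $\calO_\catA$ by Lemma~\ref{lem:basic-bundle}(1). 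On $\{z\}\times\catA^\vee$: the left side is $\bbL|_{wz}=\calO(wz-\hbar\rho^\vee)$, while the right side equals $(w^{-1})^{\dyn*}\calO(z-\hbar\rho^\vee)\otimes\calO(w\hbar\rho^\vee-\hbar\rho^\vee)=\calO(w(z-\hbar\rho^\vee))\otimes\calO(w\hbar\rho^\vee-\hbar\rho^\vee)=\calO(wz-\hbar\rho^\vee)$ by Lemma~\ref{lem:basic-bundle}(2); here the shift $w\hbar\rho^\vee-\hbar\rho^\vee$ is exactly what absorbs the discrepancy between $w$ and $w_\bullet$. The second isomorphism in (2) is the verbatim analogue of part (1) with $w^\dyn$ replaced by $(w^{-1}_\bullet)^\dyn$, and follows from the same see-saw calculation, since the translation component of the dotted action again drops out on degree-$0$ fibers.

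The main obstacle is purely bookkeeping: keeping clean track of when the dotted action $(w_\bullet)^{\dyn}$ can be swapped for the linear $w^{\dyn}$ (namely, whenever one pulls back a degree $0$ line bundle) and when the translation by $(w-1)\rho\hbar$ produces a genuine correction. The correction surfaces on the $\{z\}\times\catA^\vee$ restriction in part (2) and is neutralized exactly by the factor $\tilde\calO(w\hbar\rho^\vee-\hbar\rho^\vee)$; verifying that this factor is invisible on $\catA\times\{\lambda\}$ by Lemma~\ref{lem:basic-bundle}(1) and nontrivial on $\{z\}\times\catA^\vee$ is what makes the see-saw comparison close.
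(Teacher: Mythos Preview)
Your proof is correct and follows essentially the same approach as the paper: verify the two claimed isomorphisms by the See-Saw Lemma via restrictions to $\catA\times\{\lambda\}$ and $\{z\}\times\catA^\vee$. The only cosmetic differences are that the paper restricts to the specific point $z=\hbar\rho^\vee$ (where $\bbL|_z$ and $\calP|_z$ become trivial) rather than a general $z$, and for part~(2) the paper invokes Lemma~\ref{lem:bullettransfer} in place of your direct computation of $((w^{-1}_\bullet)^{\dyn*}\bbL)|_\lambda$.
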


\begin{proof}(1). From Lemma \ref{lem:basic-bundle}, we have
\begin{align*}
(w^{\dyn*}\bbL)|_\la&=(w^{\dyn*}\vor^*\calP)|_\la=\calP|_{w^{\dyn}(\la)+\rho\hbar}=\calP|_{\la+\rho\hbar+w^{\dyn}(\la)-\la}=(\bbL\otimes w^{\dyn*}\calP\otimes \calP^{-1})|_\la,\\
(w^\dynst\bbL)|_{\hbar\rho^\vee}&=w^\dynst(\bbL|_{\hbar\rho^\vee})=w^\dynst\calO_{\catA^\vee}=\calO_{\catA^\vee}=(\bbL\otimes w^{\dyn*}\calP\otimes\calP^{-1})|_{\hbar\rho^\vee}.
\end{align*}
By the See-Saw Lemma, we know that $w^\dynst\bbL\cong\bbL\otimes w^\dynst\calP\otimes \calP^{-1}$.

(2).  Similarly, we have 
\begin{align*}(w^*\bbL)|_{\lambda}&= w^*(\bbL|_\la)= w^*_\bullet(\bbL|_\la)= (w^*_\bullet\bbL)|_\la\overset{\text{Lem. } \ref{lem:bullettransfer}}=((w_\bullet^{-1})^{\dyn*}\bbL)|_\la,\\
(w^*\bbL)|_{\hbar\rho^\vee}&=\bbL|_{w(\hbar\rho^\vee)}=\vor^*(\calP|_{w(\hbar\rho^\vee)})\overset{\sharp}=\calP|_{w(\hbar\rho^\vee)}=\calO(w(\hbar\rho^\vee)-\hbar\rho^\vee),\\
((w_\bullet^{-1})^{\dyn*}\bbL)|_{\hbar\rho^\vee}&=(w_\bullet^{-1})^{\dyn*}(\bbL|_{\hbar\rho^\vee})=(w_\bullet^{-1})^{\dyn*}\calO_{\catA^\vee}=\calO_{\catA^\vee}.
\end{align*}
Here $\sharp$ follows from the invariance of translations  of degree zero line bundles. 
Therefore, the See-Saw Lemma gives
\[
w^*\bbL\cong(w_\bullet^{-1})^{\dyn*}\bbL\otimes \tilde \calO(w(\hbar\rho^\vee)-\hbar\rho^\vee).
\]
\end{proof}

\begin{example}\label{rem:bullet} For each simple root $\al$ and $\la\in \catA^\vee$, Lemma \ref{lem:act} gives
\begin{align*}(s_\al^{\dyn *}\bbL)|_\la&=\bbL|_{s_\al(\la)}=\calO(s_\al(\la)+\rho\hbar)=\calO(\la+\rho\hbar)\otimes \calO(-\la_{\al^\vee}\al)=\bbL|_\la\otimes \calO(-\la_{\al^\vee}\al),\\
(s_\al^{\dyn *}\bbL)|_{\hbar\rho^\vee}&=s_\al^{\dyn*}(\bbL|_{\hbar\rho^\vee})=s_\al^{\dyn*}\calO_{\catA^\vee}=\calO_{\catA^\vee},\\
(s_\al^*\bbL)|_\la&=s_\al^*\calO(\la+\rho\hbar)=\calO(s_\al(\la+\hbar\rho))=\bbL|_\la\otimes \calO(-\la_{\al^\vee}\al-\hbar\al),\\
(s_\al^*\bbL)|_{\hbar\rho^\vee}&=\bbL|_{s_\al(\hbar\rho^\vee)}=\calO(-\hbar\al^\vee). 
\end{align*}
\end{example}

\begin{lemma}\label{lem:salphacomp}For each simple root $\al$, we have
\[
s_\al^*s_\al^{\dyn*}\bbL\cong\bbL\otimes \tilde\calO(-\hbar\al)\otimes \tilde \calO(\hbar\al^\vee)\cong\bbL\otimes (\calO(-\hbar\al)\boxtimes \calO(\hbar\al^\vee)).
\]
\end{lemma}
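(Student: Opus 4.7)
The plan is to invoke the See-Saw Lemma (used repeatedly in this section), verifying that the two sides of the claimed isomorphism have isomorphic restrictions to $\catA\times\{\lambda\}$ for every $\lambda\in\catA^\vee$ and to $\{\hbar\rho^\vee\}\times\catA^\vee$ at the normalization point.

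For the left-hand side, note that $s_\al^*$ and $s_\al^{\dyn*}$ act on different factors of $\catA\times\catA^\vee$ and therefore commute. Using Lemma~\ref{lem:basic-bundle}(3) to push restrictions through each pullback, over $\catA\times\{\lambda\}$ one finds
\[
(s_\al^*s_\al^{\dyn*}\bbL)|_\lambda = s_\al^*(\bbL|_{s_\al\lambda}) = s_\al^*\calO(s_\al\lambda+\rho\hbar) = \calO(\lambda+\rho\hbar-\hbar\al),
\]
where the final step uses $s_\al\rho=\rho-\al$ for simple $\al$ together with the standard rule $s_\al^*\calO(\mu) = \calO(s_\al\mu)$. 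Over $\{\hbar\rho^\vee\}\times\catA^\vee$, the identity $s_\al(\hbar\rho^\vee) = \hbar\rho^\vee - \hbar\al^\vee$ combined with \eqref{eq:Lres} and the analogous rule $s_\al^{\dyn*}\calO(\mu) = \calO(s_\al\mu)$ on $\catA^\vee$ yields
\[
(s_\al^*s_\al^{\dyn*}\bbL)|_{\hbar\rho^\vee} = s_\al^{\dyn*}(\bbL|_{\hbar\rho^\vee-\hbar\al^\vee}) = s_\al^{\dyn*}\calO(-\hbar\al^\vee) = \calO(\hbar\al^\vee),
\]
where the last step uses $s_\al\al^\vee = -\al^\vee$.

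For the right-hand side, Lemma~\ref{lem:basic-bundle}(1) produces the matching restrictions directly: $\tilde\calO(-\hbar\al)|_\lambda = \calO(-\hbar\al)$ and $\tilde\calO(\hbar\al^\vee)|_\lambda = \calO_\catA$, whereas $\tilde\calO(-\hbar\al)|_{\hbar\rho^\vee} = \calO_{\catA^\vee}$ and $\tilde\calO(\hbar\al^\vee)|_{\hbar\rho^\vee} = \calO(\hbar\al^\vee)$. Combined with the known restrictions of $\bbL$, the RHS restrictions are $\calO(\lambda+\rho\hbar-\hbar\al)$ and $\calO(\hbar\al^\vee)$, matching the LHS, and the See-Saw Lemma then delivers the first isomorphism. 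The second isomorphism is the definition of the external tensor product $\boxtimes$, once one recalls that $\calO(-\hbar\al)$ lives on $\catA$ (since $-\hbar\al\in\catA^\vee$) and $\calO(\hbar\al^\vee)$ lives on $\catA^\vee$ (since $\hbar\al^\vee\in\catA$).

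The main obstacle is not conceptual but bookkeeping: one has to keep straight the convention whereby data in $X^*$ produces line bundles on $\catA$, while data in $X_*$ produces line bundles on $\catA^\vee$, which governs which of $\tilde\calO(-\hbar\al)$ or $\tilde\calO(\hbar\al^\vee)$ trivializes on which slice. An alternative route would combine parts (1) and (2) of Lemma~\ref{lem:act} and exploit the commutation of $s_\al^*$ with $s_\al^{\dyn*}$ to reduce to the already-derived transformation rules for $\calP$, but this produces extraneous $\calP$-twists and seems less transparent than the direct See-Saw computation sketched above.
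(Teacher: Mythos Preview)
Your proof is correct and follows essentially the same route as the paper's own proof: both compute the restrictions of each side to $\catA\times\{\lambda\}$ and to $\{\hbar\rho^\vee\}\times\catA^\vee$ using Lemma~\ref{lem:basic-bundle} and \eqref{eq:Lres}, obtain $\calO(\lambda+\rho\hbar-\hbar\al)$ and $\calO(\hbar\al^\vee)$ on both sides, and conclude by the See-Saw Lemma. Your additional remarks about bookkeeping and the alternative route via Lemma~\ref{lem:act} are accurate but not needed for the argument.
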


\begin{proof}We have
\begin{align*}
(s_\al^*s_\al^{\dyn*}\bbL)|_\la&=s_\al^*(s_\al^{\dyn*}\bbL|_\la)=s_\al^*(\bbL|_{s_\al(\la)})=s_\al^*\calO(s_\al(\la)+\rho\hbar)\\
&=\calO(\la+s_\al(\rho\hbar))=\calO(\la+\rho\hbar-\hbar\al),\\
(\bbL\otimes \tilde\calO(-\hbar\al)\otimes \tilde \calO(\hbar\al^\vee))|_\la&=\calO(\la+\rho\hbar)\otimes \calO(-\hbar\al)=\calO(\la+\rho\hbar-\hbar\al),\\
(s_\al^*s_\al^{\dyn*}\bbL)|_{\hbar\rho^\vee}&=(s_\al^{\dyn*}\bbL)|_{s_\al(\hbar\rho^\vee)}=s_\al^{\dyn*}(\bbL|_{s_\al(\hbar\rho^\vee)})=s_\al^{\dyn*}\calO(s_\al(\hbar\rho^\vee)-\hbar\rho^\vee)\\
&=s_\al^{\dyn*}\calO(-\hbar\al^\vee)=\calO(\hbar\al^\vee),\\
(\bbL\otimes \tilde\calO(-\hbar\al)\otimes \tilde\calO(\hbar\al^\vee))|_{\hbar\rho^\vee}&=\bbL|_{\hbar\rho^\vee}\otimes \tilde\calO(-\hbar\al)|_{\hbar\rho^\vee}\otimes \tilde\calO(\hbar\al^\vee)|_{\hbar\rho^\vee}\\
&=\calO(\hbar\rho^\vee-\hbar\rho^\vee)\otimes \calO_{\catA}\otimes \calO(\hbar\al^\vee)=\calO(\hbar\al^\vee).
\end{align*}
The conclusion then follows from the See-Saw Lemma.
\end{proof}

\subsection{Restrictions of $\bbS$}
The following Lemma will be used when we determine zeros and poles of rational sections. 
\begin{lemma}\label{lem:basicS}
\begin{enumerate}
\item \label{item:basicS1}
We have the following restrictions: 
\begin{align*}
\bbS_{w,v}|_\la&=\calO(\la-w_\bullet v^{-1}(\la)),&\quad  \bbS_{w,v}|_{z}&=\calO(z-v_\bullet w^{-1}(z)),\\
\bbS'_{w,v}|_\la&=\calO(w_\bullet^{-1}\la- v^{-1}(\la)),&\quad  \bbS'_{w,v}|_{z}&=\calO(w(z)-v_\bullet z),\\
\bbS''_{w,v}|_\la&=\calO(v(\la)-w_\bullet \la),&\quad  \bbS''_{w,v}|_{z}&=\calO(v_\bullet^{-1}z- w^{-1}(z)).
\end{align*}
\item \label{item:basicS2}If $w=v=s_\al$, we have
\[
\bbS_{\al, \al}=\bbS_{s_\al, s_\al}=\calO(\hbar\al)\boxtimes \calO(-\hbar\al^\vee).
\]
In particular, $\bbS_{w_0,w_0}=\calG\otimes \calH^{-1}$ where $\calG:=\tilde\calO(2\hbar\rho)$, $\calH:=\tilde\calO(2\hbar\rho^\vee)$. 

\end{enumerate}
\end{lemma}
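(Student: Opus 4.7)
The plan is to prove both parts by direct computation using the restriction identities of Lemma~\ref{lem:basic-bundle}, combined with the formulas \eqref{eq:Lres} for the restrictions of $\bbL$ and the standard identification $(w^{-1})^*\calO(\mu) = \calO(w\mu)$ for degree-zero line bundles (together with its dynamical analogue $(v^{-1})^{\dyn*}\calO(\mu) = \calO(v\mu)$). For part~(\ref{item:basicS2}), the See-Saw Lemma will then upgrade matching restrictions to isomorphisms on $\catA\times\catA^\vee$.

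For part~(\ref{item:basicS1}), I would start from the definition $\bbS_{w,v}=\bbL\otimes(w^{-1})^*(v^{-1})^{\dyn*}\bbL^{-1}$ and push the restriction $(-)|_\lambda$ inside using Lemma~\ref{lem:basic-bundle}(\ref{item:basic-bundle3}): first turn $(v^{-1})^{\dyn*}$ into an evaluation at $v^{-1}\lambda$, then pull $(w^{-1})^*$ past the restriction. After applying $(w^{-1})^*\calO(\mu) = \calO(w\mu)$ and collecting terms, the $\hbar$-shift coming from $\bbL = \vor^*\calP$ assembles into exactly the dynamical dot-action $w_\bullet^\dyn$, yielding $\calO(\lambda - w_\bullet v^{-1}\lambda)$. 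The $z$-restriction is symmetric, with the roles of the two pullbacks swapped and $(v^{-1})^{\dyn*}\calO(\mu) = \calO(v\mu)$ used in place of its $\catA$-counterpart. The formulas for $\bbS'_{w,v} = w^*\bbS_{w,v}$ and $\bbS''_{w,v} = v^{\dyn*}\bbS_{w,v}$ then follow by one more application of Lemma~\ref{lem:basic-bundle}(\ref{item:basic-bundle3}), together with the same dot-action rearrangement.

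For part~(\ref{item:basicS2}), I would substitute $w=v=s_\alpha$ into the formulas just established. Using $\rho - s_\alpha\rho = \alpha$ and $\rho^\vee - s_\alpha\rho^\vee = \alpha^\vee$ (which hold because $\alpha$ is simple), the restrictions collapse to $\bbS_{s_\alpha,s_\alpha}|_\lambda \cong \calO(\hbar\alpha)$ and $\bbS_{s_\alpha,s_\alpha}|_z \cong \calO(-\hbar\alpha^\vee)$. These are precisely the restrictions of $\calO(\hbar\alpha)\boxtimes\calO(-\hbar\alpha^\vee)$ by Lemma~\ref{lem:basic-bundle}(\ref{item:basic-bundle1}), so the See-Saw Lemma produces the claimed isomorphism. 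For $\bbS_{w_0,w_0}$, the same procedure with $w_0\rho = -\rho$ and $w_0\rho^\vee = -\rho^\vee$ gives $\bbS_{w_0,w_0}|_\lambda \cong \calO(2\hbar\rho)$ and $\bbS_{w_0,w_0}|_z \cong \calO(-2\hbar\rho^\vee)$, matching the restrictions of $\calG\otimes\calH^{-1}$; See-Saw concludes.

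The only real obstacle is bookkeeping: the defining formula of $\bbS_{w,v}$ is written in terms of the direct $W$-actions, but the cleanest output — absorbing the dot-shift baked into $\bbL$ via $\vor$ — is most naturally expressed in terms of the dot-actions. At each step one must carefully identify expressions such as $w\mu + (\rho - w\rho)\hbar$ with $w_\bullet\mu$ (and their $\catA$-side analogues), but once this notational dictionary is set up, each of the six identities in part~(\ref{item:basicS1}) reduces to a one-line rearrangement, and the See-Saw applications in part~(\ref{item:basicS2}) are immediate.
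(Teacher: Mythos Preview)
Your proposal is correct and follows essentially the same approach as the paper: compute the restrictions of $\bbS_{w,v}$ directly from the definition using Lemma~\ref{lem:basic-bundle} and \eqref{eq:Lres}, rewrite the output via the dot-action, then deduce the primed and double-primed versions by one more pullback. For part~(\ref{item:basicS2}) the paper simply says the result follows from part~(\ref{item:basicS1}) together with $s_\al(\rho)=\rho-\al$ and $w_0(\rho)=-\rho$; your explicit invocation of the See-Saw Lemma to upgrade matching restrictions to a global isomorphism is what the paper leaves implicit.
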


\begin{proof}(1). They follow from  \eqref{eq:Lres} and  Lemma \ref{lem:basic-bundle}. We prove the restrictions to $\la$ as examples.
\begin{align*}
\bbS_{w,v}|_\la&=(\bbL\otimes (w^{-1})^*(v^{-1})^\dynst\bbL^{-1})|_\la=\calO(\la+\rho\hbar)\otimes (w^{-1})^*(\bbL^{-1}|_{v^{-1}(\la)})\\
&=\calO(\la+\rho\hbar)\otimes (w^{-1})^*\calO(-v^{-1}(\la)-\rho\hbar)=\calO(\la+\rho\hbar-wv^{-1}(\la)-w(\rho\hbar))\\
&=\calO(\la-w_\bullet v^{-1}(\la)). \\
\bbS'_{w,v}|_\la&=(w^*\bbS_{w,v})|_\la=w^*(\bbS_{w,v}|_\la)=w^*\calO(\la-w_\bullet v^{-1}(\la))=\calO(w^{-1}_\bullet \la-v^{-1}(\la)),\\
\bbS''_{w,v}|_\la&=(v^\dynst \bbS_{w,v})|_\la=\bbS_{w,v}|_{v(\la)}=\calO(v(\la)-w_\bullet \la)).
\end{align*}
(2). They follow from Part (1), and the identities $s_\al(\rho)=\rho-\al$ and  $w_0(\rho)=-\rho.$
\end{proof}

\section{Duality functors and Poincar\'e pairings} \label{sec:dual}
In this section we introduce an anti-involution of the elliptic twisted group algebra and the Poincar\'e pairings of periodic modules. 

\subsection{Duality functors}
Denote 
\[
\theta_\Pi(\hbar\pm z)=\prod_{\al>0}\theta(\hbar\pm z_\al), \quad \theta_\Pi(\hbar\pm \la)=\prod_{\al>0}\theta(\hbar\pm\la_{\al^\vee}), \quad \theta_\Pi(z)=\prod_{\al>0}\theta(z_\al), \quad \theta_\Pi(\la)=\prod_{\al>0}\theta(\la_{\al^\vee}). 
\]
Note that ${}^w\theta_\Pi(z)=\epsilon_w\theta_\Pi(z)$ and ${}^{w^{\dyn}}\theta_\Pi(\la)=\epsilon_w\theta_\Pi(\la)$. 
Define
\begin{equation}\label{eqn:h_g}
\bfg=\frac{\theta_\Pi(\hbar-z)}{\theta_\Pi(z)},\quad  \bfh=\frac{\theta_\Pi(\hbar-\la)}{\theta_\Pi(\la)}. 
\end{equation}
Recall from Lemma \ref{lem:basicS} that \[\calG=\tilde\calO(2\hbar\rho),\, \ \calH=\tilde\calO(2\hbar\rho^\vee).\]
Note that  $w^*\calH=\calH, w^{\dyn*}\calG=\calG$.

\begin{lemma}\label{lem:GHrational}The function $\bfg=\frac{\theta_\Pi(\hbar-z)}{\theta_\Pi(z)}$ is a rational section of the line bundle $\calG=\tilde\calO(2\hbar\rho )$, and $\bfh=\frac{\theta_\Pi(\hbar-\la)}{\theta_\Pi(\la)}$ is a rational section of the line bundle $\calH=\tilde \calO(2\hbar\rho^\vee)$. Moreover, $\bfg$ is invariant under $w^{\dyn*}$ and $\bfh$ is invariant under $w^*$.
\end{lemma}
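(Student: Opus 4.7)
The plan is to analyze each of the two functions factor by factor, identify the line bundle each factor is a rational section of on $E$, pull back via the character maps $\chi_\al$ (resp.\ $\chi_{\al^\vee}$), and then take the tensor product over positive roots. The invariance statements will then be immediate because $\bfg$ depends only on the $\catA$-coordinate and $\bfh$ depends only on the $\catA^\vee$-coordinate, while $w^{\dyn*}$ acts only on $\catA^\vee$ and $w^*$ only on $\catA$.

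More concretely, I would first treat a single factor of $\bfg$. Recall from \S\ref{sec:poin} that $\theta(u)$ vanishes to order one at $0\in E$ and nowhere else, so it is a (holomorphic) section of the degree one line bundle $\calO(\{0\})$ on $E$. Consequently $\theta(\hbar-u)$ is a section of $\calO(\{\hbar\})$, and the ratio $\theta(\hbar-u)/\theta(u)$ is a rational section of the degree zero line bundle $\calO(\{\hbar\}-\{0\})=\calO(\hbar)$. Pulling back along $\chi_\al:\catA\to E$ and using the convention $\chi_\mu^*\calO(t)=\calO(t\mu)$ from \S\ref{subsec:rootsys}, I get that $\theta(\hbar-z_\al)/\theta(z_\al)$ is a rational section of $\calO(\hbar\al)$ on $\catA$.

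Next, taking the product over positive roots and using $\sum_{\al>0}\al=2\rho$, I conclude that $\bfg$ is a rational section of
\[
\bigotimes_{\al>0}\calO(\hbar\al)=\calO\Bigl(\hbar\sum_{\al>0}\al\Bigr)=\calO(2\hbar\rho)
\]
on $\catA$. Pulling back along the projection $\pr_1:\catA\times\catA^\vee\to\catA$ yields $\tilde\calO(2\hbar\rho)=\calG$, so $\bfg$ is indeed a rational section of $\calG$. The argument for $\bfh$ is entirely parallel: each factor $\theta(\hbar-\la_{\al^\vee})/\theta(\la_{\al^\vee})$ is the pullback via $\chi_{\al^\vee}:\catA^\vee\to E$ of $\theta(\hbar-\cdot)/\theta(\cdot)$, hence a rational section of $\calO(\hbar\al^\vee)$, and the product over positive coroots gives a rational section of $\calO(2\hbar\rho^\vee)$ on $\catA^\vee$, which pulls back to $\calH$.

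Finally, for the invariance: $\bfg$ is the pullback of a function on $\catA$ under $\pr_1$, and since $w^{\dyn*}$ acts trivially on the $\catA$-factor of $\catA\times\catA^\vee$, we obtain $w^{\dyn*}\bfg=\bfg$. The symmetric statement $w^*\bfh=\bfh$ follows in the same way. No serious obstacle is expected here; the only care needed is bookkeeping between $\theta$ as a function on $\bbC^*$ (or $\bbC$) and $\theta$ as a section of a specific line bundle on $E$, which is already recorded in \S\ref{sec:poin}.
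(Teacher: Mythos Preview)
Your proof is correct and follows essentially the same approach as the paper: analyze each factor $\theta(\hbar-z_\al)/\theta(z_\al)$ as a rational section of $\calO(\hbar)$ on $E$, pull back via $\chi_\al$ to $\calO(\hbar\al)$, and tensor over positive roots to obtain $\tilde\calO(2\hbar\rho)$. The paper cites Lemma~\ref{lem:basic-bundle} for the invariance, which amounts to your observation that $\bfg$ is pulled back from the $\catA$-factor and $w^{\dyn*}$ acts only on $\catA^\vee$.
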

\begin{proof}Let $\al>0$. We first look at the function  $\frac{\theta(\hbar-z_\al)}{\theta(z_\al)}$ with variable $z_\al\in E$. It is a rational section of $\calO(\hbar)$ over $E$. Pulling back to $\catA$ via $\chi_\al:\catA\to E$, we see that $\frac{\theta(\hbar-z_\al)}{\theta(z_\al)}$ is a rational section of $\calO(\hbar\al)$ over $\catA$, which can also be viewed as a rational section of $\tilde\calO(\hbar\al)$ over $\catA\times \catA^\vee$. Therefore, the fraction $\frac{\theta_\Pi(\hbar-z)}{\theta_\Pi(z)}$ is a rational section of 
\[
\bigotimes_{\al>0}\tilde\calO(\hbar\al)=\tilde\calO(2\hbar\rho).
\]
The claim for $\bfh$ is proved similarly. The last part follows from Lemma \ref{lem:basic-bundle}. 
\end{proof}

We have the following duality functors 
\[\sHom(-,\calH^{-1}):\Coh(\catA\times \catA^\vee)\to \Coh(\catA\times \catA^\vee), \, \ \sHom(-,\calG):\Coh(\catA\times \catA^\vee)\to \Coh(\catA\times \catA^\vee). \]
The functors given in this form will not be used. 
Instead, below we give simple expressions of these functors on the objects of interests. 
\subsection{Dual algebras and modules}

Define the map
\[D_\la:E\times \catA\times \catA^\vee\to E\times \catA\times \catA^\vee, (\hbar,z, \la)\mapsto (\hbar, z,-\la),\]
 and similarly define $D_z, D_\hbar: E\times \catA\times \catA^\vee\to E\times \catA\times \catA^\vee$ inverting $z$ and $\hbar$ respectively.  
We have the following  ({\it non-commutative}) diagram
\[
\xymatrix{E\times \catA\times\catA^\vee\ar[rd]^-{D_z}\ar[rr]^-{D_\la}&&E\times \catA\times \catA^\vee\\
&E\times \catA\times \catA^\vee\ar[ur]^{D_\hbar}&}.
\]
These maps induce auto-functors on the categories $\Coh^{W\times W^{\dyn}}(\catA\times \catA^\vee)$, $\Coh^{W^{\dyn}}(\catA\times \catA^\vee)$, $\Coh^{W}(\catA\times \catA^\vee)$, and $\Coh(\catA\times \catA^\vee)$.
By definition, it is easy to see the following. 
\begin{lemma}\label{lem:D1}
We have
\begin{enumerate}
 \item  $D_z^*, D_\la^*, D_\hbar^*$ commute with each other, and  commute with $w^*$  and $w^{\dyn*}$;
 \item we  have
 $(D_\la^*)^2=(D_z^*)^2=(D_\hbar^*)^2=\id$. 
 \end{enumerate}
\end{lemma}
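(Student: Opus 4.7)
The plan is to verify both assertions by unpacking the definitions of the maps $D_z$, $D_\la$, $D_\hbar$ and the Weyl group actions, which act on disjoint factors of the product $E \times \catA \times \catA^\vee$ (with the obvious exceptions). Since pullback is contravariant and turns composition of maps into composition of functors in reverse, everything reduces to checking commutativity of the underlying endomorphisms of $E \times \catA \times \catA^\vee$ as algebraic maps.

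For part (1), I would handle the three families separately. First, the maps $D_z$, $D_\la$, $D_\hbar$ each negate one of the three factors (using the group structure of $E$, $\catA$, $\catA^\vee$) and leave the other two alone, so pairwise commutativity is immediate from the fact that operations on different factors of a product commute. Second, the Weyl group actions $w$ on $\catA$ and $w^\dyn$ on $\catA^\vee$ also act on disjoint factors, hence $w$ commutes with $D_\la$ and $D_\hbar$ on the nose, and $w^\dyn$ commutes with $D_z$ and $D_\hbar$. Third, the only nontrivial cases are $w \circ D_z = D_z \circ w$ on $\catA$ and $w^\dyn \circ D_\la = D_\la \circ w^\dyn$ on $\catA^\vee$; but the Weyl group acts by group homomorphisms on the abelian varieties $\catA$ and $\catA^\vee$, so it commutes with the negation map, giving both identities. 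Passing to pullbacks preserves all these commutation relations (pullback is a contravariant functor, but $AB = BA$ implies $B^*A^* = A^*B^*$ just as well).

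For part (2), each of $D_\la$, $D_z$, $D_\hbar$ is an involution as a morphism of varieties because inversion on an abelian variety (or on $E$) squares to the identity. Contravariance gives $(D_?^*)^2 = (D_? \circ D_?)^* = \id^* = \id$.

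There is no real obstacle here: the content is notational, and the argument is a direct book-keeping check. The only mild subtlety worth flagging explicitly in the write-up is that the Weyl group actions $w^*$ and $w^{\dyn*}$ appearing in the statement are the \emph{linear} actions (not the dotted actions $w_\bullet^*$ introduced in \S\ref{subsec:rho-shift}), so they do not involve $\hbar$ and therefore automatically commute with $D_\hbar^*$; this is why the statement is as clean as it is.
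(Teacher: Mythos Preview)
Your proposal is correct and follows the same approach as the paper, which simply states ``By definition, it is easy to see the following'' without further elaboration. You have spelled out the book-keeping that the paper leaves implicit, including the one mildly nontrivial point that the linear Weyl group actions commute with negation because they are group homomorphisms of the abelian varieties.
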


\begin{definition}We define 
\begin{align}\bbL(-\la)&=D_\la^*\bbL,& \bbL(-z)&=D_{z}^*\bbL, & \bbS(-\la)&=D_\la^*\bbS,& \bbS(-z)&=D_z^*\bbS.
\end{align}
\end{definition}
We prove some basic properties of these functors. In particular, we are going to see that $D_\la^*\bbL=\sHom(\bbL,\calG)$ and $D_z^*\bbL=\sHom(\bbL,\calH^{-1})$, so $D_\la^*$ and $D_z^*$ are actually the duality functors when applied to $\bbL$.

\begin{lemma}\label{lem:D2}We have the following canonical isomorphisms.
\begin{enumerate}
\item $D_\la^*\calG=\calG, D_\la^*\calH=\calH^{-1}$.
\item $D_z^*\calG=\calG^{-1}$, $D_z^*\calH=\calH$.
\item $D_\hbar^*\calG=\calG^{-1},$ $D_\hbar^*\calH=\calH^{-1}$. 
\item $D_\la^*\bbL=\bbL^{-1}\otimes \calG=\sHom(\bbL,\calG)$,
\item $D_z^*\bbL=\bbL^{-1}\otimes \calH^{-1}=\sHom(\bbL,\calH^{-1})$,
\item $D_\hbar^*\bbL=\bbL\otimes \calG^{-1}\otimes \calH$. 
\item Applied on $\bbL$, $\calH$, and $\calG$, the composition of  two of the maps in $\{D_z^*, D_\la^*, D_\hbar^*\}$ agrees with the third one.  
\end{enumerate}
\end{lemma}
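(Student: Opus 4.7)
The plan is to handle (1)--(3) directly from the definitions of $\calG$ and $\calH$, to establish (4)--(6) by the See-Saw Lemma applied to each line bundle on $\catA\times \catA^\vee$, and to obtain (7) as a formal consequence of (1)--(6). Recall that $\calG=\tilde{\calO}(2\hbar\rho)$ is pulled back from $\catA$, while $\calH=\tilde{\calO}(2\hbar\rho^\vee)$ is pulled back from $\catA^\vee$; moreover, on an abelian variety the pullback of a degree-zero line bundle by the inversion map $[-1]$ is canonically isomorphic to its dual.

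For (1)--(3), the idea is that each $D^*_\bullet$ acts as the identity on one factor and as inversion on the other (with $D_\hbar^*$ inverting the parameter $\hbar$). Hence $D_\la^*\calG=\calG$ and $D_z^*\calH=\calH$ for the trivial reason that the bundles do not depend on the inverted variable; while $D_\la^*\calH=\calH^{-1}$ and $D_z^*\calG=\calG^{-1}$ follow from $[-1]^*$ inverting the relevant degree-zero line bundles $\calO(2\hbar\rho^\vee)$ and $\calO(2\hbar\rho)$. Finally, $D_\hbar^*\calG=\calG^{-1}$ and $D_\hbar^*\calH=\calH^{-1}$ follow because these bundles are linear in $\hbar$: writing $\calG=\chi_{2\rho}^*\calO(\hbar)$ and using $\calO(-\hbar)\cong \calO(\hbar)^{-1}$ on $E$, inverting $\hbar$ inverts these bundles.

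For (4)--(6), the method is to verify the claimed isomorphisms on $\calA\times\{\lambda\}$ for all $\lambda\in\catA^\vee$ and on $\{\hbar\rho^\vee\}\times\catA^\vee$, then invoke the See-Saw Lemma. For instance, for (4) we compute both sides on $\calA\times\{\la\}$: $(D_\la^*\bbL)|_{\la}=\bbL|_{-\la}=\calO(-\la+\rho\hbar)$ by \eqref{eq:Lres}, while $(\bbL^{-1}\otimes\calG)|_\la=\calO(-\la-\rho\hbar)\otimes\calO(2\hbar\rho)=\calO(-\la+\rho\hbar)$. On $\{\hbar\rho^\vee\}\times\catA^\vee$ both restrict to $\calO_{\catA^\vee}$ by the normalization of $\calP$ together with part (1). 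The identification with $\sHom(\bbL,\calG)$ and $\sHom(\bbL,\calH^{-1})$ is automatic for line bundles. The computation for (6) is similar, using the restrictions and the part already established for $\calG$, $\calH$.

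Part (7) reduces to a direct calculation: e.g.\ for $\bbL$, one has $D_\la^* D_z^*\bbL=D_\la^*(\bbL^{-1}\otimes\calH^{-1})= (\bbL^{-1}\otimes\calG)^{-1}\otimes\calH=\bbL\otimes\calG^{-1}\otimes\calH=D_\hbar^*\bbL$ using (1), (2), (4), (5), (6), and permuting the roles gives the other two equalities; the analogous statements for $\calG$ and $\calH$ follow directly by pairing the signs from (1)--(3). The main technical point is not difficulty but bookkeeping: one must be careful that the See-Saw comparisons produce canonical isomorphisms consistent with the compositions in (7), so that the identifications for $\bbL$ agree (rather than differing by a scalar) with those obtained by tensoring identifications for $\calG$ and $\calH$. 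This is ensured by choosing the normalization point $z=\hbar\rho^\vee$ at which all the bundles in play restrict canonically to $\calO_{\catA^\vee}$, after which the verifications in (1)--(6) pin down each isomorphism uniquely.
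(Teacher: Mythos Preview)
Your proposal is correct and follows essentially the same approach as the paper. The paper's own proof is terse---it simply says (1)--(3) ``follow from definitions,'' (4)--(6) ``follow from the property of Poincar\'e line bundle together with definitions,'' and for (7) computes one of the compositions (specifically $D_z^*D_\hbar^*\bbL$) directly from (1)--(6)---so your detailed See-Saw verification for (4)--(6) and your remarks on canonicity are a welcome expansion rather than a different method.
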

\begin{proof}

(1)-(3). They follow from definitions. 

(4)-(6). They follow from the property of Poincar\'e line bundle together with definitions of  $\bbL$, $\calH$, and $\calG$.
 
(7). We only prove $
D_\la^*\bbL=D_z^*D_\hbar^*\bbL$ as an example. By Parts (1)-(6), we have 
\begin{align*}
D_z^*D_\hbar^*\bbL&=D_z^*(\bbL\otimes \calG^{-1}\otimes \calH)=\bbL^{-1}\otimes \calH^{-1}\otimes \calG^{-1}\otimes \calH=\bbL^{-1}\otimes \calG^{-1}=D_\la^*\bbL.
\end{align*}
\end{proof}

\begin{lemma}\label{lem:D-S}
 We have the following canonical isomorphisms. 
\begin{enumerate}
\item $\bbS(-\la)_{w,v}=\bbL(-\la)\otimes (w^{-1})^*(v^{-1})^{\dyn*}\bbL(-\la)^{-1}.$
\item $\bbS(-z)_{w,v}=\bbL(-z)\otimes (w^{-1})^*(v^{-1})^{\dyn*}\bbL(-z)^{-1}. $
\item $D_\la^*\bbS_{w,v}=\bbS(-\la)_{w,v}=\bbS^{-1}_{w,v}\otimes \calG\otimes (w^{-1})^*\calG^{-1}.$
\item $ D_z^*\bbS_{w,v}=\bbS(-z)_{w,v}=\bbS^{-1}_{w,v}\otimes \calH^{-1}\otimes (v^{-1})^\dynst \calH. $
\item $D_\hbar^*\bbS_{w,v}=\bbS_{w,v}\otimes (w^{-1})^*\calG\otimes \calG^{-1}\otimes (v^{-1})^\dynst \calH^{-1}\otimes \calH.$
\item $D_\la^*\bbS(-z)_{w,v}=\bbS(-z)_{w,v}^{-1}\otimes (w^{-1})^*\calG\otimes \calG^{-1}$. 
\item $D_z^*\bbS(-\la)_{w,v}=\bbS(-\la)_{w,v}^{-1}\otimes \calH\otimes (v^{-1})^\dynst \calH^{-1}$. 
\end{enumerate}
\end{lemma}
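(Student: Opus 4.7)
The plan is to prove all seven parts by direct unwinding of the definitions, leveraging the commutation relations in Lemma \ref{lem:D1} and the explicit formulas for the duality functors on $\bbL$, $\calG$, and $\calH$ established in Lemma \ref{lem:D2}. Two structural facts will be used throughout: first, that $D_z^*$, $D_\la^*$, $D_\hbar^*$ pairwise commute and commute with $w^*$ and $v^{\dyn*}$; second, that $\calG = \tilde\calO(2\hbar\rho)$ is pulled back from $\catA$, so that $(v^{-1})^{\dyn*}\calG = \calG$, while $\calH = \tilde\calO(2\hbar\rho^\vee)$ is pulled back from $\catA^\vee$, so that $(w^{-1})^*\calH = \calH$. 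These invariances are exactly what make the formulas in (3)--(7) collapse to the asymmetric forms stated, in which only one of the two pullback factors survives.

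For parts (1) and (2), I would simply distribute $D_\la^*$ (respectively $D_z^*$) across the tensor factors and pullbacks in $\bbS_{w,v} = \bbL \otimes (w^{-1})^*(v^{-1})^{\dyn*}\bbL^{-1}$, using Lemma \ref{lem:D1} to commute the duality past $(w^{-1})^*(v^{-1})^{\dyn*}$, and then recognize $D_\la^*\bbL = \bbL(-\la)$ (resp.\ $D_z^*\bbL = \bbL(-z)$) by definition. For parts (3), (4), (5) the same distribution is performed, but one substitutes $D_\la^*\bbL = \bbL^{-1}\otimes\calG$, $D_z^*\bbL = \bbL^{-1}\otimes\calH^{-1}$, and $D_\hbar^*\bbL = \bbL\otimes\calG^{-1}\otimes\calH$ from Lemma \ref{lem:D2}(4)--(6); after distributing $(w^{-1})^*(v^{-1})^{\dyn*}$ over the auxiliary factors and applying the invariances $(v^{-1})^{\dyn*}\calG^{\pm 1}=\calG^{\pm 1}$ and $(w^{-1})^*\calH^{\pm 1}=\calH^{\pm 1}$, one identifies the remaining $\bbL$-dependent part with $\bbS_{w,v}^{\pm 1}$ and reads off the claimed formulas.

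Parts (6) and (7) are then handled in the same way, starting from the formulas in (1) and (2) for $\bbS(-\la)_{w,v}$ and $\bbS(-z)_{w,v}$ and applying the remaining duality functor; alternatively, one can invoke Lemma \ref{lem:D2}(7), which gives $D_\la^* D_z^* \cong D_\hbar^*$ on $\bbL, \calG, \calH$ (and hence on any object built from them by tensor and Weyl-group pullback), reducing (6) and (7) to a rewriting of (5) in terms of $\bbS(-z)_{w,v}^{-1}$ and $\bbS(-\la)_{w,v}^{-1}$ respectively.

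I do not expect any substantive obstacle: the argument is entirely formal once the invariance properties of $\calG$ and $\calH$ are recorded, and the See-Saw Lemma is not needed here because every required isomorphism is already canonical at the level of pullbacks and tensor products. The only bookkeeping care required is to track which of the two Weyl-group pullback factors actually acts non-trivially on each of $\calG$ and $\calH$ after distribution, since this is what determines the exact shape of the correction terms $(w^{-1})^*\calG^{\pm 1}$ and $(v^{-1})^{\dyn*}\calH^{\pm 1}$ appearing in parts (3)--(7).
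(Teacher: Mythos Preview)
Your proposal is correct and takes exactly the approach the paper uses: apply $D_\la^*$, $D_z^*$, $D_\hbar^*$ directly to the defining formula $\bbS_{w,v}=\bbL\otimes(w^{-1})^*(v^{-1})^{\dyn*}\bbL^{-1}$, commute past the Weyl pullbacks via Lemma~\ref{lem:D1}, and substitute the identities of Lemma~\ref{lem:D2}. The paper's own proof is a two-line sketch saying precisely this; your write-up simply fills in the bookkeeping (in particular the invariances $(v^{-1})^{\dyn*}\calG=\calG$ and $(w^{-1})^*\calH=\calH$) that the paper leaves implicit.
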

\begin{proof}(1). Applying $D_\la^*$ and $D_z^*$ on the identity from the definition of $\bbS_{w,v}$ in Definition \ref{def:bbS}:
\[
\bbS_{w,v}=\bbL\otimes (w^{-1})^*(v^{-1})^\dynst\bbL^{-1},
\]
one will obtain the two identities.

(2)-(6): They all follow from Lemma \ref{lem:D2}. 
\end{proof}

\begin{lemma}
The functors $D_\lambda^*$ and $ D_z^*$ are  monoidal functors  in the tensor monoidal category $\Coh^{W\times W^\dyn}(\catA\times \catA^\vee)$. In particular, $\bbS(-\la)$ and $\bbS(-z)$ are algebra objects acting on the modules  $\bbL(-\la)$ and $\bbL(-z)$, respectively.
\end{lemma}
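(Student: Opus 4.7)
The plan is to show that $D_\lambda^*$ and $D_z^*$ are monoidal for the tensor structure $\star$, and then immediately deduce both assertions by transport of structure.

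The core observation is that the standard pullback $D_\lambda^*$ is already strong monoidal for the usual tensor product on $\Coh(\catA\times\catA^\vee)$, i.e.\ $D_\lambda^*(\calF\otimes\calG)\cong D_\lambda^*\calF\otimes D_\lambda^*\calG$. To lift this to the graded tensor $\star$, I only need to check that $D_\lambda^*$ preserves the bi-grading (which is immediate since $D_\lambda$ acts only on the $\catA^\vee$-factor and does not touch the $(w,v)$-labels) and that it commutes with the translation-like functors $(w^{-1})^*$ and $(v^{-1})^{\dyn*}$ that appear in the definition of $\star$. This latter commutation is exactly Lemma~\ref{lem:D1}(1). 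Combining these, for objects $\calF_{w,v}$ and $\calG_{w',v'}$ I obtain
\[
D_\lambda^*\bigl(\calF_{w,v}\star\calG_{w',v'}\bigr)
=D_\lambda^*\bigl(\calF_{w,v}\otimes(w^{-1})^*(v^{-1})^{\dyn*}\calG_{w',v'}\bigr)
\cong D_\lambda^*\calF_{w,v}\star D_\lambda^*\calG_{w',v'},
\]
and the unit $\calO$ at degree $(e,e)$ is plainly preserved. The same argument, using that $D_z^*$ also commutes with $w^*$ and $w^{\dyn *}$, shows that $D_z^*$ is monoidal. I would also verify that the coherence isomorphisms (associator, unitors) match up, but since both structures are built from the same pullback-and-tensor recipe, this is essentially formal.

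Once $D_\lambda^*$ is monoidal, I apply it to the algebra structure on $\bbS$ from Lemma~\ref{lem:tensor}(2): the multiplication $\bbS\star\bbS\to\bbS$ pulls back to a multiplication
\[
\bbS(-\lambda)\star\bbS(-\lambda)=D_\lambda^*\bbS\star D_\lambda^*\bbS\cong D_\lambda^*(\bbS\star\bbS)\longrightarrow D_\lambda^*\bbS=\bbS(-\lambda),
\]
with associativity and unit inherited. Likewise, the module structure $\bbS\star\bbL\to\bbL$ of Lemma~\ref{lem:poly} pulls back to a module structure $\bbS(-\lambda)\star\bbL(-\lambda)\to\bbL(-\lambda)$. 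The same procedure applied to $D_z^*$ yields the algebra $\bbS(-z)$ acting on $\bbL(-z)$.

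The only mild subtlety is bookkeeping: one must check that the degree conventions on $\bbS(-\lambda)_{w,v}$ and $\bbL(-\lambda)$ obtained by pullback agree with those declared in Lemma~\ref{lem:D-S}(1)--(2), so that the monoidal transport really lands inside $\Coh^{W\times W^{\dyn}}(\catA\times\catA^\vee)$ in the same grading; this is built into the fact that $D_\lambda$ is $W\times W^{\dyn}$-equivariant when the two Weyl group actions on $\catA\times\catA^\vee$ commute with inversion in the $\catA^\vee$-factor up to the existing action. Beyond this verification, the proof is formal transport along monoidal functors, so I do not anticipate a substantial obstacle.
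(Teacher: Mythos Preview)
Your proposal is correct and follows the same approach as the paper: the paper's proof is the single sentence ``They follow from the fact that $D_\lambda$ and $D_z$ both commute with the Weyl actions,'' which is precisely the commutation you cite from Lemma~\ref{lem:D1}(1) together with the ordinary monoidality of pullback. Your write-up simply unpacks this one line and then spells out the transport-of-structure argument for the algebra and module statements, so there is nothing to correct.
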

\begin{proof}They follow from the fact that $D_\la$ and $D_z$ both commute with the Weyl group actions.
\end{proof}
Hence, for any two morphisms $f:\calF_1\to\calF_2$ and $g:\calF_1'\to \calF_2'$, we have  $D^*_\lambda(f\star g)=D^*_\lambda(f)\star D^*_\lambda(g)$.
We can similarly define $\bbS(-\la)'$, $\bbS(-\la)''$, $\bbS(-z)'$, and $\bbS(-z)''$.  
\begin{remark}
Similar to  \S~\ref{subsec:rat_sec}, one can write the twisted product of rational sections of $\bbS(-\la)$ and $\bbS(-z)$. 
Moreover, the actions of $\bbS(-z)$ and $\bbS(-\la)$ on the corresponding modules $\bbM(-\la)$, $\bbM(-\la)^\dyn$, $ \bbM(-z)$, and $\bbM(-z)^\dyn$ can be described in the same way as in \S~\ref{subsec:rat_sec}. 
\end{remark}

It is easy to see that $\bbS(-\la)_{w,v}$ and $\bbS(-z)_{w,v}$ share similar properties as $\bbS_{w,v}$ in Lemma \ref{lem:basicS}. Notably, we have 
\begin{align*}
\bbS(-z)_{w_1w_2,v_1v_2}&=\bbS(-z)_{w_1,v_1}\otimes  (w_1^{-1})^*(v_1^{-1})^{\dyn*}\bbS(-z)_{w_2,v_2},\\
 \bbS(-\la)_{w,v}^{-1}&= (w^{-1})^*(v^{-1})^{\dyn*}\bbS(-\la)_{w^{-1}, v^{-1}},\\
\bbS(-\la)_{\al, \al}&=\calO(\hbar\al)\boxtimes \calO(\hbar\al^\vee),\\
\bbS(-z)_{w_1w_2,v_1v_2}&=\bbS(-z)_{w_1,v_1}\otimes  (w_1^{-1})^*(v_1^{-1})^{\dyn*}\bbS(-z)_{w_2,v_2},\\
\bbS(-z)_{w,v}^{-1}&= (w^{-1})^*(v^{-1})^{\dyn*}\bbS(-z)_{w^{-1}, v^{-1}},\\
\bbS(-z)_{\al, \al}&=\calO(-\hbar\al)\boxtimes \calO(-\hbar\al^\vee).
\end{align*}

\subsection{The degree inversion}\label{subsec:deg_inv}
We define the degree inversion functor, to be used in \S~\ref{subsec:antiinvolution} below.   Define 
\[\fD:\Coh^{W\times W^{\dyn}}(\catA\times \catA^\vee)\to \Coh^{W\times W^{\dyn}}(\catA\times \catA^\vee)\]
\[\calF\mapsto \fD(\calF) \text{ so that } \fD(\calF)_{w,v}=\calF_{w^{-1},v^{-1}}.\]
Note that 
$\fD^2=\id$. 
It is easy to see we have a canonical isomorphism
\begin{align}\label{eq:inv_deg}
\fD(\bbS''_{w,v})\star'\fD(\bbS''_{x,y})\cong\fD(\bbS''_{wx,vy}). 
\end{align}
Similar identity holds if one replaces $\bbS''$ by $\bbS'$ and  $\star'$ by $\star''$. 

The conclusion goes through word-for-word for variant forms of $\bbS''$, i.e., the $\bbS''(-\la)$ and $\bbS''(-z)$.

\subsection{The anti-involutions}\label{subsec:antiinvolution}
From Lemma~\ref{lem:D-S}.(2),  we have the following canonical isomorphisms
\[
\bbS(-\la)''_{w^{-1},v^{-1}}=(v^{-1})^\dynst \bbS(-\la)_{w^{-1},v^{-1}}= w^*\bbS_{w, v}\otimes w^*\calG^{-1}\otimes \calG=\bbS_{w,v}'\otimes w^*\calG^{-1}\otimes \calG.\]
The rational section  $\bfg$ defines a rational section
$\frac{\bfg}{{}^{w^{-1}}\bfg}$ of $w^*\calG^{-1}\otimes \calG$.
We define the rational map 
\begin{equation}
\label{eq:iotaDla}
{\iota}_{\la}:\bbS_{w,v}'\dashrightarrow \bbS(-\la)_{w^{-1},v^{-1}}''=\fD(\bbS(-\la)_{w^{-1}, v^{-1}}), \end{equation} 
by multiplication by $\frac{\bfg}{{}^{w^{-1}}\bfg}$. 
Taking the  sum over all $w,v\in W$, we obtain a rational map of graded coherent sheaves
\begin{equation}\label{eq:iota_la}
{\iota}_{\la}:\bbS'\dashrightarrow  \fD(\bbS(-\la)''), \quad \de_wa\de_v^\dyn\mapsto \de_{v^{-1}}^\dyn a\cdot \frac{\bfg}{{}^{w^{-1}\bfg}}\de_{w^{-1}}.\end{equation}

\begin{prop}\label{prop:anti}The rational map $\iota_{\la}$ defines an anti-homomorphism of sheaves of algebras. That is, we have the following commutative diagram
\[
\xymatrix{\bbS'_{w_1,v_1}\star' \bbS'_{w_2,v_2}\ar[d]_-{\sim}\ar@{-->}[rr]^-{\sigma\circ (\iota_{\la}\times \iota_{\la})}&&\fD(\bbS(-\la)_{w_2^{-1},v_2^{-1}}'')\star' \fD(\bbS(-\la)_{w_1^{-1},v_1^{-1}}'')\ar[d]^-\sim_-{\eqref{eq:inv_deg}}\\
\bbS'_{w_1w_2,v_1v_2}\ar@{-->}[rr]_-{\iota_{\la}}&&\fD(\bbS(-\la)_{(w_1w_2)^{-1},(v_1v_2)^{-1}}'')}
\]
Here $\sigma$ means switching the two components.
\end{prop}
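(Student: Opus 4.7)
My plan is to verify the anti-algebra-homomorphism property of $\iota_\la$ by a direct computation on rational sections, using the product formulas of \S~\ref{subsec:Sproductrational}. The commutativity of the stated diagram is equivalent to the identity
\[
\iota_\la\bigl(\de_{w_1}c_1\de_{v_1}^\dyn\star'\de_{w_2}c_2\de_{v_2}^\dyn\bigr)=\iota_\la(\de_{w_2}c_2\de_{v_2}^\dyn)\sttwo\iota_\la(\de_{w_1}c_1\de_{v_1}^\dyn),
\]
after unwinding the $\fD$-functor via \eqref{eqn:D-star}: $\fD$ just repackages $\sttwo$ as $\stone$ with the two factors swapped, accounting for the $\sigma$ in the top row. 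Here $c_i$ denotes an arbitrary rational section of $\bbS'_{w_i,v_i}$.

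For the left hand side, the $\star'$-formula gives $\de_{w_1w_2}\cdot({}^{w_2^{-1}}c_1)({}^{v_1^\dyn}c_2)\cdot\de_{v_1v_2}^\dyn$, so applying $\iota_\la$ yields
\[
\de_{v_2^{-1}v_1^{-1}}^\dyn\cdot({}^{w_2^{-1}}c_1)({}^{v_1^\dyn}c_2)\cdot\frac{\bfg}{{}^{w_2^{-1}w_1^{-1}}\bfg}\cdot\de_{w_2^{-1}w_1^{-1}}.
\]
For the right hand side, set $d_i:=c_i\cdot\frac{\bfg}{{}^{w_i^{-1}}\bfg}$; then $\iota_\la(\de_{w_i}c_i\de_{v_i}^\dyn)=\de_{v_i^{-1}}^\dyn d_i\de_{w_i^{-1}}$, and the $\sttwo$-product formula for $\bbS(-\la)''$ gives
\[
\de_{v_2^{-1}v_1^{-1}}^\dyn\cdot({}^{v_1^\dyn}d_2)({}^{w_2^{-1}}d_1)\cdot\de_{w_2^{-1}w_1^{-1}}.
\]
After expanding $d_1,d_2$ and commuting the function factors, the $c_i$-part matches the left hand side, and the whole check reduces to the single identity
\[
\Bigl({}^{v_1^\dyn}\frac{\bfg}{{}^{w_2^{-1}}\bfg}\Bigr)\cdot\Bigl({}^{w_2^{-1}}\frac{\bfg}{{}^{w_1^{-1}}\bfg}\Bigr)=\frac{\bfg}{{}^{w_2^{-1}w_1^{-1}}\bfg}.
\]
By Lemma~\ref{lem:GHrational}, $\bfg$ is $W^\dyn$-invariant, and since spectral and dynamical actions commute one has ${}^{v_1^\dyn}\bfg=\bfg$ and ${}^{v_1^\dyn w_2^{-1}}\bfg={}^{w_2^{-1}}\bfg$; the left hand side then telescopes to the right.

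There is no serious analytic obstacle; the substantive input is solely the dynamical invariance of $\bfg$, which is precisely why an anti-involution of this form exists. The main thing to take care over is the bookkeeping involved in juggling the four monoidal structures $\tensor$, $\stone$, $\sttwo$, and $\fD$, and in confirming that Lemma~\ref{lem:D-S} legitimately identifies $\bbS(-\la)''_{w^{-1},v^{-1}}$ with $\bbS'_{w,v}\otimes\calG\otimes w^*\calG^{-1}$, so that multiplication by the rational section $\bfg/{{}^{w^{-1}}\bfg}$ of $\calG\otimes w^*\calG^{-1}$ really does define the map $\iota_\la$ at the sheaf level before any rational trivialisation.
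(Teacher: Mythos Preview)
Your proof is correct and follows essentially the same approach as the paper: a direct computation on rational sections comparing $\iota_\la$ of the product with the product (in reversed order) of the $\iota_\la$'s, reducing to the telescoping identity $\frac{\bfg}{{}^{w_2^{-1}}\bfg}\cdot{}^{w_2^{-1}}\bigl(\frac{\bfg}{{}^{w_1^{-1}}\bfg}\bigr)=\frac{\bfg}{{}^{(w_1w_2)^{-1}}\bfg}$. You make the use of $W^\dyn$-invariance of $\bfg$ explicit where the paper applies it tacitly, but otherwise the arguments are the same.
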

\begin{proof}
The  top map sends a rational section $(\de_{w_1}a_1\de_{v_1}^\dyn, \de_{w_2}a_2\de_{v_2}^\dyn)$ to 
\[
\de_{v_2^{-1}}^\dyn a_2\cdot \frac{\bfg}{{}^{w_2^{-1}}\bfg}\de_{w_2^{-1}}\de_{v_1^{-1}}^\dyn a_1\cdot \frac{\bfg}{{}^{w_1^{-1}}\bfg}\de_{w_1^{-1}}=\de_{(v_1v_2)^{-1}}^\dyn{}^{v_1^\dyn}a_2\cdot {}^{w_2^{-1}}a_1\cdot \frac{\bfg}{{}^{w_2^{-1}}\bfg}\cdot {}^{w_2^{-1}}(\frac{\bfg}{{}^{w_1^{-1}}\bfg})\de_{(w_1w_2)^{-1}}.\]
On the other hand, we have 
\[
\iota_\la(\de_{w_1}a_1\de_{v_1}^\dyn\de_{w_2}a_2\de_{v_2}^\dyn)=\iota_\la (\de_{w_1w_2}{}^{w_2^{-1}}a_1\cdot {}^{v_1^\dyn}a_2\de_{v_1v_2}^\dyn)=\de_{(v_1v_2)^{-1}}^\dyn{}^{v_1^\dyn}a_2\cdot {}^{w_2^{-1}}a_1\cdot \frac{\bfg}{{}^{(w_1w_2)^{-1}}\bfg}\de_{(w_1w_2)^{-1}}. 
\]
Therefore, the two agree.
\end{proof}
Similarly, the rational section $\bfh$ of $\calH$ 
and the canonical isomorphisms \[\bbS(-z)_{w^{-1}, v^{-1}}''=(v^{-1})^\dynst \bbS(-z)_{w^{-1},v^{-1}}=w^*\bbS_{w,v}\otimes (v^{-1})^\dynst \calH^{-1}\otimes \calH=\bbS'_{w,v}\otimes (v^{-1})^\dynst  \calH^{-1}\otimes \calH \] define an anti-homomorphism by multiplying by $\frac{\bfh}{{}^{v^\dyn}\bfh}$:
\begin{equation}\label{eq:iota_z}
\iota_{z}: \bbS'_{w, v}\dashrightarrow \bbS(-z)''_{w^{-1},v^{-1}}, \quad \de_w a\de_v^\dyn\mapsto \de_{v^{-1}}^\dyn a\cdot \frac{\bfh}{{}^{v^\dyn}\bfh}\de_{w^{-1}}. 
\end{equation}
The statement corresponding to Proposition \ref{prop:anti} holds as well, that is, 
\[
\iota_z(\de_{w_1}a_1\de_{v_1}^\dyn\de_{w_2}a_2\de_{v_2}^\dyn)=\iota_z(\de_{w_2}a_2\de_{v_2}^\dyn)\iota_z(\de_{w_1}a_1\de_{v_1}^\dyn).
\]

\begin{remark}Via the equivalences of categories in Lemma  \ref{lem:monoidal_equivalence}, we can replace the domains and codomains of $\iota_\la$ and $\iota_z$ by the other versions of $\bbS, \bbS(-\la)$ and $\bbS(-z)$. For instance, if one considers $\iota_\la:\bbS_{w,v}(-\la)\dashrightarrow \bbS_{w^{-1}, v^{-1}}'$, in terms of rational sections, one would get
\[
\iota_\la(a\de_w\de_v^\dyn)=\iota_\la(\de_w {}^{w^{-1}}a\de_v^\dyn)=\de_{v^{-1}}^\dyn{}^{w^{-1}}a\cdot \frac{\bfg}{{}^{w^{-1}}\bfg}\de_{w^{-1}}=\de_{w^{-1}}{}^{(v_1^{-1})^\dyn}a\cdot \frac{{}^w\bfg}{\bfg}\de_{v^{-1}}^\dyn.
\]
\end{remark}

\subsection{Poincar\'e pairings on periodic modules}
Recall that \begin{align*}
\bbM&=\bigoplus_{x\in W}\bbM_x=\bigoplus_{x\in W}x^*\bbL, &\bbM^\dyn&=\bigoplus_{y\in W}\bbM^\dyn_y=\bigoplus_{y\in W}y^{\dyn*}\bbL.
\end{align*}
Applying $D^*_\la$ and $D^*_z$ give 
\begin{align*}
\bbM(-\la)&:=\bigoplus_{x\in W}\bbM(-\la)_x=\bigoplus_{x\in W}x^*\bbL(-\la), &\bbM(-\la)^\dyn&:=\bigoplus_{w\in W}\bbM(-\la)^\dyn_y=\bigoplus_{y\in W}y^{\dyn*}\bbL(-\la),\\
\bbM(-z)&:=\bigoplus_{x\in W}\bbM(-z)_x=\bigoplus_{x\in W}x^*\bbL(-z), &\bbM(-z)^\dyn&:=\bigoplus_{y\in W}\bbM(-z)_y^\dyn=\bigoplus_{y\in W}y^{\dyn*}\bbL(-z).
\end{align*}
Similar as in \S~\ref{subsec:act_periodic}, $\bbM(-\la)$ and $\bbM(-\la)^\dyn$ (resp. $\bbM(-z)$ and $\bbM(-z)^\dyn$) are modules over $\bbS(-\la)$, $\bbS(-\la)'$, and $\bbS(-\la)''$ (resp. $\bbS(-z)$, $ \bbS(-z)'$, and $\bbS(-z)''$), with actions denoted by $\bullet$\ and $\bullet^\dyn$.

We define the following pairing
\begin{equation}
\lag\_, \_\rag_{\la}: \bbM\otimes \bbM(-\la)\dashrightarrow \calO, ~\bbM_x\otimes \bbM(-\la)_x=x^*\calG\overset{\frac{1}{{}^{x^{-1}}\bfg}}\dashrightarrow \calO.
\end{equation}
\begin{equation}\label{eq:D-la-pair}
\lag af_{x_1}, bf_{x_2}\rag_{\la}\mapsto \de_{x_1,x_2}^{\Kr}a\cdot b\cdot \frac{1}{{}^{x^{-1}}\bfg}.
\end{equation}

\begin{prop}[Adjunction]\label{prop:adj}
For any $v,w,x\in W$, the following diagram commutes 
\[
\xymatrix{
v^\dynst\big((\bbS_{w,v}'\star' \bbM_x)\otimes \bbM(-\la)_{wx}\big) \ar@{-->}[d]^-{
(p\star'af_x,bf_{wx})
\mapsto (af_x,\iota_\la(p)\star'' bf_{wx})
}\ar[rr]^-{v^\dynst(\bullet\otimes
\id)}&&v^\dynst\big(\bbM_{wx}\otimes \bbM(-\la)_{wx}\big)\ar@{-->}[rr]^-{v^\dynst\lag-,-\rag_\la}&&\calO\ar@{=}[d]\\
\bbM_x\otimes \left(\bbS''(-\la)_{w^{-1}, v^{-1}}\star'' \bbM(-\la)_{wx}\right)\ar[rr]_-{\id\otimes \bullet}& &\bbM_x\otimes \bbM(-\la)_x\ar@{-->}[rr]_-{\lag-,-\rag_\la} && \calO}.
\]
Here the left vertical map is given by the composite
\[
\xymatrix{v^\dynst\big((\bbS_{w,v}'\star' \bbM_x)\otimes \bbM(-\la)_{wx}\big)\ar[r]^-\sim &v^\dynst x^*\bbS'_{w,v}\otimes \bbM_x\otimes v^\dynst \bbM(-\la)_{wx}\ar@{-->}[d]^-{\iota_\la\otimes \id\otimes \id}\\
\bbM_x\otimes \left(\bbS''(-\la)_{w^{-1}, v^{-1}}\star'' \bbM(-\la)_{wx}\right)& x^*v^\dynst \bbS(-\la)''_{w^{-1}, v^{-1}}\otimes \bbM_x\otimes v^\dynst \bbM(-\la)_{wx}\ar[l]^-\sim}.
\]
In particular, for any rational section $p$ of $\bbS_{w,v}'$, and $f$ and $f'$ rational sections of the corresponding modules, we have 
\begin{align}
\label{eq:adjD-la}{}^{(v^{-1})^{\dyn}}\lag p\bullet f,f'\rag_{\la}&=\lag f,\iota_{\la}(p)\bullet f'\rag_{\la}.
\end{align}
\end{prop}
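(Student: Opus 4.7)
Since every map in the diagram is either a canonical isomorphism of $\star'$- or $\star''$-tensor products or multiplication by an explicit rational section (the pairing~\eqref{eq:D-la-pair} and the anti-involution $\iota_\la$ of \eqref{eq:iota_la}), commutativity of the diagram is equivalent to the pointwise identity~\eqref{eq:adjD-la}. The plan is therefore to pick generic rational sections $p=\de_wc\de_v^\dyn$ of $\bbS'_{w,v}$, $f=af_x$ of $\bbM_x$, and $f'=c' f_{wx}$ of $\bbM(-\la)_{wx}$ — only the diagonal summand contributes by the Kronecker $\de$ in~\eqref{eq:D-la-pair} — and verify that both composites produce the same rational section of $\calO$.

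\textbf{The computation.} For the left-hand side, the action formula from \S~\ref{subsec:rat_sec} gives $p\bullet f={}^{v^\dyn}a\cdot {}^{x^{-1}}c\,f_{wx}$, and the pairing~\eqref{eq:D-la-pair} yields $\lag p\bullet f,f'\rag_\la={}^{v^\dyn}a\cdot {}^{x^{-1}}c\cdot c'\cdot 1/{}^{(wx)^{-1}}\bfg$. Translating by $(v^{-1})^\dyn$ converts the leading ${}^{v^\dyn}a$ back to $a$ and leaves the $\bfg$-factor unchanged, because $\bfg$ is $W^\dyn$-invariant by Lemma~\ref{lem:GHrational}. For the right-hand side, the formula \eqref{eq:iota_la} gives $\iota_\la(p)=\de_{v^{-1}}^\dyn c\cdot\tfrac{\bfg}{{}^{w^{-1}}\bfg}\de_{w^{-1}}$, and the analogue of \eqref{eq:bullet} for the action $\bbS''(-\la)\sttwo\bbM(-\la)\to\bbM(-\la)$ produces
\[
\iota_\la(p)\bullet f'={}^{(v^{-1})^\dyn x^{-1}}c\cdot\frac{{}^{x^{-1}}\bfg}{{}^{(wx)^{-1}}\bfg}\cdot {}^{(v^{-1})^\dyn}c'\,f_x,
\]
after applying the $W^\dyn$-invariance of $\bfg$ once more to simplify the middle factor. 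Pairing with $f=af_x$ via~\eqref{eq:D-la-pair} introduces the factor $1/{}^{x^{-1}}\bfg$, which cancels the numerator ${}^{x^{-1}}\bfg$ arising from $\iota_\la$, and the surviving expression coincides exactly with the one obtained on the left.

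\textbf{Main obstacle.} The conceptual content is light; the difficulty is bookkeeping. One must carefully trace the canonical isomorphism $v^\dynst(\bbS'_{w,v}\star'\bbM_x)\otimes\bbM(-\la)_{wx}\cong v^\dynst x^*\bbS'_{w,v}\otimes\bbM_x\otimes v^\dynst\bbM(-\la)_{wx}$ used in the left vertical arrow, then reidentify $v^\dynst x^*\bbS'_{w,v}$ with $x^*v^\dynst\bbS(-\la)''_{w^{-1},v^{-1}}$ in a way compatible with $\iota_\la$ as defined in~\eqref{eq:iotaDla}. Once these identifications and the commutativity of spectral and dynamical Weyl actions are unwound, the proof is purely symbolic, resting on the $W^\dyn$-invariance of $\bfg$ and the two cancellations highlighted above.
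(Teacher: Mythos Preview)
Your proposal is correct and follows exactly the approach the paper takes: the paper's own proof is the single sentence ``One can check the commutativity by using the formulas of $\iota_\la$ in \eqref{eq:iota_la} and the pairing $\lag\_,\_\rag_\la$ in \eqref{eq:D-la-pair},'' and you have carried out precisely that check. One presentational remark: when you describe the effect of applying ${}^{(v^{-1})^\dyn}$ to the left-hand side you only mention what happens to $a$ and to the $\bfg$-factor, but of course ${}^{x^{-1}}c$ and $c'$ also acquire the prefix ${}^{(v^{-1})^\dyn}$, and it is these transformed factors that match the ones you correctly compute on the right-hand side; making that explicit would remove any ambiguity.
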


\begin{proof} One can check the commutativity by using the formulas of $\iota_\la$ in \eqref{eq:iota_la} and the pairing $\lag\_, \_\rag_\la$ in \eqref{eq:D-la-pair}. 
\end{proof}

\begin{remark}
Similarly, we also have the following pairings:
\begin{align*}
\lag\_, \_\rag_{ \la}^\dyn:\bbM^\dyn\otimes \bbM(-\la)^\dyn\dashrightarrow \calO,& \quad 
\lag\_, \_\rag_{ \la}^\dyn:\bbM_y^\dyn\otimes \bbM(-\la)^\dyn_y=\calG\overset{\frac{1}{{}^{}\bfg}}\dashrightarrow \calO.\\
\lag\_, \_\rag_{ z}: \bbM\otimes \bbM(-z)\dashrightarrow \calO, &\quad 
\lag\_, \_\rag_{ z}: \bbM_x\otimes \bbM(-z)_x=\calH^{-1}\overset\bfh\dashrightarrow  \calO.\\
\lag\_, \_\rag_{ z}^\dyn: \bbM^\dyn\otimes \bbM(-z)^\dyn
\dashrightarrow \calO,&\quad \lag\_,\_\rag_z^\dyn: \bbM_y\otimes \bbM(-\la)_y=y^\dynst\calH^{-1}\overset{{}^{(y^{-1})^\dyn}\bfh}\dashrightarrow  \calO.
\end{align*}
For instance, in terms of rational sections, the last one can be written as 
\begin{equation}\label{eq:D-z-dyn-pair}
\lag af_{y_1}^\dyn , bf_{y_2}^\dyn\rag_{ z}^\dyn\mapsto \de^{\Kr}_{y_1,y_2}a\cdot b\cdot {}^{(y_1^{-1})^\dyn}\bfh.
\end{equation}
They satisfy the following adjointness: given $p$ a rational section of $\bbS_{w,v}$ (or $\bbS'_{w,v}$ or $\bbS''_{w,v}$),
\begin{align}
\label{eq:adjD-la-dyn}\lag p\bullet^\dyn f^\dyn, (f')^\dyn\rag_{ \la}^\dyn&={}^w\lag f^\dyn, \iota_{ \la}(p)\bullet^\dyn (f')^\dyn\rag_{ \la},\\
\label{eq:adjD-z}\lag p\bullet f,f'\rag_{ z}&={}^{v^{\dyn}}\lag f,\iota_{ z}(p)\bullet f'\rag_{ z},\\
\label{eq:adjD-z-dyn}\lag p\bullet^\dyn f^\dyn, (f')^\dyn\rag_{ z}^\dyn&={}^w\lag f^\dyn, \iota_{ z}(p)\bullet^\dyn (f')\rag^\dyn_{ z}.
\end{align}
\end{remark}

\section{Langlands duality}\label{sec:Langlands}
In this section we consider the Langlands dual system. In Proposition \ref{prop:i*S} and Lemma \ref{lem:i*T} below, we identify   the DL operators of the Langlands dual system with the operators  $T^{z,\la,\dyn}_\al$ defined in Section \S~\ref{sec:DL}. 

\subsection{Langlands dual root system}
Let $\calA=\catA^\vee$ and hence we have a natural isomorphism of abelian varieties  $\calA^\vee\cong\catA$ that preserves group structures. Notice that  $\calA=X^*\otimes E$, and $\calA^\vee:=X_*\otimes E$, with the lattices $X^*$ and $X_*$ coming from the root system and hence an action of its Weyl group $W$. The actions are denoted by $w^\vee$ on $\calA$ and $w^{\vee\dyn}$ on $\calA^\vee$, respectively.

Fixing $\hbar':=-\hbar\in E$, the construction of \S~\ref{subsec:rho-shift}
 defines a line bundle $\bbL^\vee$ on $\calA\times\calA^\vee$.
For concreteness and to avoid possible confusions, we spell out the line bundle  $\bbL^\vee$\footnote{In the present paper we avoid the usual dual of vector bundles and reserve the notation $\vee$ for Langlands duality, which is consistent with the dual of abelian varieties.}. 
The construction of \S~\ref{subsec:rootsys} using the lattices $X^*$ and $X_*$ defines coordinates $(z^\vee,\lambda^\vee)\in \calA\times\calA^\vee$.
Clearly we have an identification $(z^\vee,\lambda^\vee)\in \calA\times\calA^\vee$ with $(\lambda, z)\in \catA^\vee\times \catA$. Moreover, $(\rho^\vee)^\vee$ is identified with $\rho$. Then similar as in \eqref{eq:Lres}, we have:
\begin{equation}\label{eq:Lvee-res}
\bbL^\vee|_{\la^\vee}=\calO(\la^\vee+\hbar'\rho^\vee)=\calO(\la^\vee-\hbar\rho^\vee), ~\bbL^\vee|_{z^\vee}=\calO(z^\vee-\hbar'\rho)=\calO(z^\vee+\hbar\rho).
\end{equation}
As a consequence, we obtain the following. 
\begin{prop}\label{prop:i-Lvee}
Let $i:\catA\times\catA^\vee\to \calA\times\calA^\vee$ be the isomorphism $\lambda\mapsto z^\vee$ and $z\mapsto \lambda^\vee$. Then there is a canonical isomorphism \[i^*\bbL^\vee=\bbL.\] 
\end{prop}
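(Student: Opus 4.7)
The plan is to apply the See-Saw Lemma, verifying that $i^*\bbL^\vee$ and $\bbL$ agree on all horizontal slices $\catA \times \{\lambda\}$ for $\lambda \in \catA^\vee$ and on a single vertical slice $\{z_0\} \times \catA^\vee$. The convenient choice is $z_0 = \hbar\rho^\vee$, where the normalization of the Poincar\'e line bundle makes both restrictions canonically trivial, giving an honest (canonical) isomorphism rather than only an abstract existence statement.

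First I would compute $(i^*\bbL^\vee)|_{\lambda_0}$ for a fixed $\lambda_0 \in \catA^\vee$. Since $i$ sends $(z,\lambda_0)$ to $(\lambda_0, z) \in \calA \times \calA^\vee$ under the identifications $\calA = \catA^\vee$ and $\calA^\vee = \catA$, the restriction to $\catA \times \{\lambda_0\}$ corresponds to the restriction of $\bbL^\vee$ to $\{z^\vee = \lambda_0\} \times \calA^\vee$, which by \eqref{eq:Lvee-res} equals $\calO(\lambda_0 + \hbar\rho)$ as a line bundle on $\calA^\vee = \catA$. Meanwhile $\bbL|_{\lambda_0} = \calO(\lambda_0 + \hbar\rho)$ by \eqref{eq:Lres}, and the two agree. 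For the vertical slice at $z_0 = \hbar\rho^\vee$, both $\bbL|_{\hbar\rho^\vee}$ and $(i^*\bbL^\vee)|_{\hbar\rho^\vee} = \bbL^\vee|_{\lambda^\vee = \hbar\rho^\vee}$ are the structure sheaf by direct application of \eqref{eq:Lres} and \eqref{eq:Lvee-res} respectively, using the fact that $\hbar' = -\hbar$ was chosen precisely so the shifts cancel (note $\hbar\rho^\vee - \hbar\rho^\vee = 0$ and $\hbar\rho^\vee - \hbar'\rho = \hbar\rho^\vee + \hbar\rho$ doesn't arise on the correct slice). The See-Saw Lemma then produces the isomorphism.

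For canonicity, I would pin down the isomorphism via the common trivialization on the slice $\{\hbar\rho^\vee\} \times \catA^\vee$. More conceptually, one can reduce to the universal property: $\bbL = \vor^*\calP$ and $\bbL^\vee = (\vor^\vee)^*\calP^\vee$ are obtained from the Poincar\'e line bundles on $\catA \times \catA^\vee$ and on $\calA \times \calA^\vee$ by pullback along shifts that under $i$ correspond exactly (the switch $\hbar \leftrightarrow -\hbar$ together with $\rho \leftrightarrow \rho^\vee$ matches the swap of factors). Then the canonical identification reduces to the symmetry of the Poincar\'e line bundle under bi-duality, which is supplied by Lemma~\ref{lem:Pic_Poincare} applied to the identity map under the biduality $\catA \cong \catA^{\vee\vee}$.

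The main obstacle I anticipate is purely bookkeeping: tracking the sign flip $\hbar' = -\hbar$ through the two $\rho$-shifts on opposite sides and confirming that the swap $i$ intertwines $\vor$ with $(\vor^\vee)^{-1}$ in the right way. Once this identification is clean, the See-Saw calculation and the canonicity statement via the universal property are both immediate.
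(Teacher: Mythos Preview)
Your proposal is correct and essentially matches the paper's approach. The paper's proof is a one-line appeal to the definition of the Poincar\'e line bundle together with the two normalization identities $\hbar'\rho^\vee=-\hbar\rho^\vee$ and $-\hbar'(\rho^\vee)^\vee=\hbar\rho$; your See-Saw verification via \eqref{eq:Lres} and \eqref{eq:Lvee-res} is exactly how one unpacks that appeal in the style used throughout the paper, and your ``more conceptual'' paragraph is the paper's argument verbatim. One small bookkeeping slip: $i$ does not literally intertwine $\vor$ with $(\vor^\vee)^{-1}$ (they act on different factors), but this remark is in your obstacles paragraph and plays no role in the actual proof.
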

\begin{proof}This follows from the definition of Poincar\'e line bundles and the following identifications under $i$
\[\hbar'\rho^\vee=-\hbar\rho^\vee\hbox{ and }-\hbar'(\rho^\vee)^\vee=\hbar\rho.\]
\end{proof}
It is easy to see that
\begin{align}\label{eq:dualweyl}
w^*i^*&=i^*(w^{\vee})^{\dyn*}, & v^{\dyn*}i^*&=i^*(v^\vee)^*.
\end{align}
Similar as Definition \ref{def:bbS}, we define
\[
\bbS^\vee_{w,v}=\bbL^\vee\otimes ((w^\vee)^{-1})^*((v^\vee)^{-1})^{\dyn*}(\bbL^\vee)^{-1}.
\]
\begin{prop}\label{prop:i*S}We have 
\[
i^*\bbS^\vee_{w,v}=\bbS_{v,w},\quad  i^*\bbS^{\vee'}_{w,v}=\bbS''_{v,w}, \quad i^*\bbS^{\vee''}_{w,v}=\bbS'_{v,w}. 
\]
\end{prop}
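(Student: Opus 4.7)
The plan is to unpack the definition of $\bbS^\vee_{w,v}$ from Definition \ref{def:bbS} applied to the Langlands dual datum, then transfer the pullback $i^*$ through the Weyl-group actions using Proposition \ref{prop:i-Lvee} and the compatibility \eqref{eq:dualweyl}. The mechanism is that the isomorphism $i$ swaps the two factors, hence swaps the spectral and dynamical actions; this swap is precisely what produces the $w\leftrightarrow v$ interchange in the statement.

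First, by definition,
\[
\bbS^\vee_{w,v} = \bbL^\vee \otimes ((w^\vee)^{-1})^*((v^\vee)^{-1})^{\dyn*}(\bbL^\vee)^{-1}.
\]
Distributing the monoidal functor $i^*$ across the tensor product and applying Proposition \ref{prop:i-Lvee} to identify $i^*\bbL^\vee$ with $\bbL$, the computation reduces to understanding how $i^*$ interacts with $((w^\vee)^{-1})^*$ and $((v^\vee)^{-1})^{\dyn*}$. These two pullbacks act on different factors of $\calA\times\calA^\vee$ and so commute. Rewriting \eqref{eq:dualweyl} in the equivalent forms $i^*(g^\vee)^* = g^{\dyn*}i^*$ and $i^*(g^\vee)^{\dyn*} = g^*i^*$ for $g\in W$, I slide $i^*$ past each Weyl-group pullback in turn. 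The net effect is
\[
i^*\bigl(((w^\vee)^{-1})^*((v^\vee)^{-1})^{\dyn*}(\bbL^\vee)^{-1}\bigr) = (w^{-1})^{\dyn*}(v^{-1})^*\bbL^{-1},
\]
which, after reassembling, gives $i^*\bbS^\vee_{w,v} = \bbL \otimes (v^{-1})^*(w^{-1})^{\dyn*}\bbL^{-1} = \bbS_{v,w}$. This establishes the first identity.

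For the remaining two identities I use the definitions recorded in Section~\ref{sec:bbS}: $\bbS^{\vee'}_{w,v} = (w^\vee)^*\bbS^\vee_{w,v}$ and $\bbS^{\vee''}_{w,v} = (v^\vee)^{\dyn*}\bbS^\vee_{w,v}$. Applying $i^*$, using \eqref{eq:dualweyl} once more to move it past the outermost Weyl-group pullback, and substituting the first identity, I obtain
\[
i^*\bbS^{\vee'}_{w,v} = w^{\dyn*}\,i^*\bbS^\vee_{w,v} = w^{\dyn*}\bbS_{v,w} = \bbS''_{v,w},
\]
\[
i^*\bbS^{\vee''}_{w,v} = v^{*}\,i^*\bbS^\vee_{w,v} = v^{*}\bbS_{v,w} = \bbS'_{v,w},
\]
where the last equalities use the definitions $\bbS''_{v,w} = w^{\dyn*}\bbS_{v,w}$ and $\bbS'_{v,w} = v^{*}\bbS_{v,w}$.

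The main obstacle is bookkeeping rather than substance: the identity $(g^{-1})^\vee = (g^\vee)^{-1}$, and the fact that the spectral action on the Langlands-dual side corresponds to the dynamical action on the original side (and vice versa), must be applied consistently for the indices to match. Once this is kept straight, the proof is a short formal computation with monoidal pullback functors; no geometric input beyond Proposition \ref{prop:i-Lvee} and \eqref{eq:dualweyl} is required.
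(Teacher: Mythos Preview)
Your proof is correct and follows essentially the same approach as the paper: use Proposition~\ref{prop:i-Lvee} together with the commutation relations \eqref{eq:dualweyl} to establish the first identity, then deduce the primed and double-primed versions by moving $i^*$ past the outer Weyl-group pullback. Your write-up is in fact more detailed than the paper's, which simply records the chain $i^*\bbS^{\vee'}_{w,v}=i^*(w^\vee)^*\bbS^\vee_{w,v}=w^{\dyn*}i^*\bbS^\vee_{w,v}=w^{\dyn*}\bbS_{v,w}=\bbS''_{v,w}$ and declares the rest similar.
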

\begin{proof}The first one follows from \eqref{eq:dualweyl} and Proposition \ref{prop:i-Lvee}. For the second one, we have
\[
i^*(\bbS^\vee_{w,v})'=i^*{w^\vee}^*\bbS^\vee_{w,v}=w^{\dyn*}i^*\bbS^\vee_{w,v}=w^{\dyn*}\bbS_{w,v}=\bbS''_{v,w}.
\]
The third identity can be proved similarly.
\end{proof}
\begin{example}
We have
\begin{align*}
(s_\al^{\vee*}\bbL^\vee)|_{z^\vee}&=\bbL^\vee|_{s_\al^\vee(z^\vee)}=\calO(s_\al^\vee(z^\vee)+\hbar\rho)=\calO(z^\vee+\hbar\rho-z^\vee_{\al^\vee}\al)=\bbL^\vee|_{z^\vee}\otimes \calO(-z^\vee_{\al^\vee}\al),\\
 ~(s_\al^{\vee*}\bbL^\vee)|_{\la^\vee}&=s_\al^{\vee*}(\bbL^\vee|_{\la^\vee})=s_\al^{\vee*}\calO(\la^\vee-\hbar\rho^\vee)=\calO(s_\al^{\vee\dyn}(\la^\vee-\hbar\rho^\vee))\\
&=\calO(\la^\vee-\hbar\rho^\vee-\la^\vee_{\al}\al^\vee+\hbar\al^\vee)=\bbL^\vee|_{\la^\vee}\otimes \calO(\hbar\al^\vee-\la^\vee_\al \al^\vee).\\
(s_\al^{\vee})^{\dyn*}\bbL^\vee|_{z^\vee}&=(s_{\al}^\vee)^{\dyn*}(\bbL^\vee|_{z^\vee})=(s_\al^\vee)^{\dyn*}\calO(z^\vee+\hbar\rho)=\calO(s_\al^\vee(z^\vee+\hbar\rho))=\calO(z^\vee+\hbar\rho-z^\vee_{\al^\vee}\al-\hbar\al), \\
(s_\al^\vee)^{\dyn*}\bbL^\vee|_{\la^\vee}&=\bbL^\vee|_{s_\al^\vee(\la^\vee)}=\calO(s_\al^\vee(\la^\vee)-\hbar\rho^\vee)=\calO(\la^\vee-\la^\vee_\al\al^\vee-\hbar\rho^\vee).
\end{align*}
Similar as Lemma \ref{lem:basicS}, we have 
\[
\bbS^\vee_{\al, \al}=\bbL^\vee\otimes (s_\al^*s_\al^\dynst \bbL^{\vee})^{-1}=\calO(\hbar\al)\boxtimes \calO(-\hbar\al^\vee).
\]
One can then verify that $i^*\bbS_{\al,\al}^\vee=\bbS_{\al,\al}$ because from Lemma \ref{lem:basicS}, we have $\bbS_{\al,\al}=\calO(\hbar\al)\boxtimes \calO(-\hbar\al^\vee)$.
\end{example}

\subsection{ DL operators of  the Langlands dual system}
We define the dynamical DL operators for the Langlands dual system (recall from \S\ref{sec:Langlands} that $\hbar'=-\hbar$):
\begin{align*}
T^{z^\vee,\la^\vee}_{\al^\vee}&=\de_\al^{\dyn\vee}\frac{\theta(\hbar)\theta(z^\vee_{\al^\vee}+\la^\vee_{\al})}{\theta(z^\vee_{\al^\vee})\theta(\hbar-\la^\vee_{\al})}+\de_\al^{\dyn\vee}\frac{\theta(\hbar+z^\vee_{\al^\vee})\theta(-\la^\vee_{\al})}{\theta(z^\vee_{\al^\vee})\theta(\hbar-\la^\vee_{\al})}\de_\al^{\vee}. \\
T^{z^\vee,\la^\vee,\dyn}_{\al^\vee}&=\de_\al^\vee\frac{\theta(\hbar)\theta(\la^\vee_{\al}+z^\vee_{\al^\vee})}{\theta(\la^\vee_{\al})\theta(\hbar+z^\vee_{\al^\vee})}+\de_\al^\vee\frac{\theta(\hbar-\la^\vee_{\al})\theta(-z^\vee_{\al^\vee})}{\theta(\la^\vee_{\al})\theta(\hbar+z^\vee_{\al^\vee})} \de_\al^{\dyn\vee}. 
\end{align*}
By Theorem \ref{thm:Trational}, 
they are rational sections of  $\bbS^{\vee''}$ and $\bbS^{\vee'}$, respectively. 

The following lemma explains how we obtain the  operators $T_\al^{z,\la,\dyn}$ from $T^{z^\vee,\la^\vee}_{\al^\vee}$, that is, $T^{z,\la,\dyn}_\al$ is really the corresponding ` $T^{z,\la}_\al$ ' in the Langlands dual system. 
\begin{prop}\label{lem:i*T} We have 
\begin{equation}
i^* T_{\al^\vee}^{z^\vee, \la^\vee, \dyn}=T_\al^{z,\la},\quad  i^*T_{\al^\vee}^{z^\vee, \la^\vee}=T_\al^{z,\la,\dyn}. 
\end{equation}
\end{prop}
\begin{proof}They follow from Proposition \ref{prop:i*S} and direct computation using the definition of $i^*$. 
\end{proof}

\section{More on elliptic DL operators}\label{sec:DL}

In this section,  for the convenience of the readers and for later use, we collect some variant forms of elliptic  DL operators with dynamical parameters, and show their adjoint properties. 

\subsection{More DL operators}
\label{subsec:DL_operator}In addition to the elliptic DL operators defined in \S~\ref{subsec:DL1}, using $\iota_{ \la}, \iota_{ z}$ defined in \eqref{eq:iota_la} and \eqref{eq:iota_z}, we define 
\begin{align*}
T_{\al}^{z,-\la}=\iota_{ \la}(T_\al^{z,\la}), & ~T_\al^{z, -\la,\dyn}=\iota_{ \la}(T^{z,\la,\dyn}_{\al}), \\
T_\al^{-z, \la}=\iota_{ z}(T^{z,\la}_\al), &~T_\al^{-z, \la,\dyn}=
\iota_{ z}(T_\al^{z,\la, \dyn}), 
\end{align*}
explicitly given by
 \begin{align}
\label{eq:Tz-la} T_{\al}^{z,-\la}&=\de_\al^\dyn\frac{\theta(\hbar)\theta(z_\al-\la_{\al^\vee})}{\theta(z_\al)\theta(\hbar-\la_{\al^\vee})}+\de_\al^\dyn\frac{\theta(\hbar-z_\al)\theta(\la_{\al^\vee})}{\theta(z_\al)\theta(\hbar-\la_{\al^\vee})}\de_\al , \\
\label{eq:Tz-ladyn}T_{\al}^{z,-\la, \dyn}&=\de_\al\frac{\theta(\hbar)\theta(z_\al-\la_{\al^\vee})}{\theta(\la_{\al^\vee})\theta(\hbar-z_\al)}+\de_\al\frac{\theta(\hbar-\la_{\al^\vee})\theta(z_\al)}{\theta(\la_{\al^\vee})\theta(\hbar-z_\al)}\de_\al^{\dyn}, \\
\label{eq:T-zla} T_{\al}^{-z,\la}&=\de_\al^\dyn\frac{\theta(\hbar)\theta(z_\al-\la_{\al^\vee})}{\theta(-z_\al)\theta(\hbar+\la_{\al^\vee})}+\de_\al^{\dyn}\frac{\theta(\hbar+z_\al)\theta(\la_{\al^\vee})}{\theta(z_\al)\theta(\hbar+\la_{\al^\vee})}\de_\al, \\
\label{eq:T-zladyn}T_{\al}^{-z,\la, \dyn}&=\de_\al\frac{\theta(\hbar)\theta(\la_{\al^\vee}-z_\al)}{\theta(\la_{\al^\vee})\theta(\hbar+z_\al)}+\de_\al\frac{\theta(\hbar+\la_{\al^\vee})\theta(z_\al)}{\theta(\la_{\al^\vee})\theta(\hbar+z_\al)} \de_\al^{\dyn}. 
\end{align}
Note that the operators $T^{\pm z,\pm \la}_\al$ have constant degree $s_\al^\dyn$ at $W^\dyn$, and $T^{\pm z,\pm \la, \dyn}_\al$ have constant degree $s_\al$ at $W$. They are the dynamical DL operators for $G/B$ and for its Langlands dual $G^\vee/B^\vee$, respectively.

\begin{remark} \label{rem:various_T}
Notice that $T^{z,-\la}_\al$ and $T^{z,-\la,\dyn}_\al$ are defined to be $\iota_\lambda$ applied to $T^{z,\la}_\al$ and $T^{z,\la,\dyn}_\al$ respectively. This is in general not the same as applying $D_\la^*$ to $T^{z,\la}_\al$ and $T^{z,\la,\dyn}_\al$, while the latter simply substitute $-\la$ in place of $\la$ in \eqref{eq:Tzla} and \eqref{eq:Tzladyn} respectively. Indeed, one can simply check that  $T^{z,-\la}_\al= D_\la^*T^{z,\la}_\al$ but $T^{z,-\la,\dyn}_\al\neq D_\la^*T^{z,\la, \dyn}_\al$. Similarly, $T^{-z,\la,\dyn}_\al= D_z^*T^{z,\la,\dyn}_\al$ but $T^{-z,\la}_\al\neq  D_z^*T^{z,\la}_\al$.
\end{remark}

\begin{remark}
The formula of DL operator with dynamical parameters first appeared in \cite{RW20}. Indeed, one obtains $-T_\al^{z,-\la}$ from  the formula in \cite[Theorem 1.3]{RW20} by pluging-in $-\ln h=\hbar$, $\ln h^{\al^\vee}=\la_{\al^\vee}$, and $c_1^{coh}(\calL_\al)=z_\al$ with $\calL_\al=T\times_B\bbC_{-\al}$.
The same formula is also obtained from Felder's elliptic R-matrices with dynamical parameters, which originated from solutions to the dynamical Yang-Baxter equation \cite[Remark 4.10]{ZZ22}.
\end{remark}

Similar to Theorem~\ref{thm:Trational}, we have the following.
\begin{theorem}\label{thm:Trational2}
\begin{enumerate}
\item The operators $T^{z,-\la}_\al$  and $T^{z,-\la,\dyn}_\al $ are rational sections of $\bbS(-\la)''_{}$ and $\bbS(-\la)'_{}$, respectively.
\item The  operators $T^{-z,\la}_\al$  and $T^{-z,\la,\dyn}_\al $ are rational sections of $\bbS(-z)''_{}$ and $\bbS(-z)'_{}$, respectively.
\end{enumerate}
\end{theorem}



\subsection{Adjunctions}\label{subsec:braid}
We use $?$ as a place-holder to stand for one of the indices $({\pm z,\pm\la}), ({\pm z,\pm\la},\dyn)$.
It is proved in \cite{RW20} that $(T^?\al)^2=1$ for any $\alpha\in\Sigma$ and that the braid relations are satisfied (see also \cite[Proposition~4.11]{ZZ22} for the present form). 
Given a reduced sequence of $v\in W$, by using the twisted product in \S~\ref{subsec:Sproductrational},  one define $T^{?}_v$ as the product of corresponding $T^?_\al$. 
Expanding using the direct sums  $\bbS'=\bigoplus_{v,w}\bbS'_{w,v}$ and $\bbS''=\bigoplus_{v,w}\bbS''_{w,v}$, we define the following coefficients via
\begin{align}
T^{\pm z,\pm\la}_{v}&=\sum_{w\le v}\de_v^\dyn a^{\pm z,\pm \la}_{v,w}\de_w,& T^{\pm z,\pm\la,\dyn}_{v}&=\sum_{w\le v}\de_v a^{\pm z,\pm \la, \dyn}_{v,w}\de_w^\dyn.
\end{align}
Note that $T^{\pm z, \pm \la}_v$ have constant degree $v\in W^\dyn$ and $T^{\pm z, \pm \la, \dyn}_v$ have constant degree $v\in W$, respectively. 
Standard calculation using  the twisted product gives \begin{align}\label{eq:aw0}
a^{z,\la}_{w_0,w_0}&=\frac{\bfg}{{}^{w_0^\dyn}\bfh},& a^{z,-\la}_{w_0,w_0}&=\frac{\bfg}{\bfh},& a^{-z,\la}_{w_0,w_0}&=\frac{{}^{w_0}\bfg}{{}^{w_0^\dyn}\bfh}.\\
\label{eq:aw0dyn} a^{z,\la, \dyn}_{w_0,w_0}&=\frac{{}^{w_0^\dyn}\bfh}{\bfg},&
a^{z,-\la, \dyn}_{w_0,w_0}&=\frac{\bfh}{\bfg}, &a^{-z,\la, \dyn}_{w_0,w_0}&=\frac{{}^{w_0^\dyn}\bfh}{{}^{w_0}\bfg}.
\end{align}
We compute $a^{z,\la}_{w_0,w_0}$ as an example. Let $w_0=s_{i_1}\cdots s_{i_k}$. We have 
\begin{align*}
\de_{w_0}^\dyn a^{z,\la}_{w_0,w_0}\de_{w_0}=\de_{\al_{i_1}}^\dyn \frac{\theta(\hbar-z_{\al_{i_1}})\theta(-\la_{\al^\vee_{i_1}})}{\theta(z_{\al_{i_1}})\theta(\hbar+\la_{\al^\vee_{i_1}})}\de_{\al_{i_1}}\cdots \de_{\al_{i_k}}^\dyn \frac{\theta(\hbar-z_{\al_{i_k}})\theta(-\la_{\al^\vee_{i_k}})}{\theta(z_{\al_{i_k}})\theta(\hbar+\la_{\al^\vee_{i_k}})}\de_{\al_{i_k}}.
\end{align*}
By moving all $\de_\al^\dyn$ to the left and $\de_\al$ to the right, we see that the coefficient in the middle is 
\[
\prod_{\al>0}  \frac{\theta(\hbar-z_{\al})\theta(-\la_{\al^\vee})}{\theta(z_{\al})\theta(\hbar+\la_{\al^\vee})}=\frac{\bfg}{{}^{w_0^\dyn}\bfh}.
\]

Furthermore, by using the anti-involutions in \S\ref{subsec:antiinvolution}, we see that 
\begin{align*}\iota_\la(T_v^{z,\la})&=T_{v^{-1}}^{z,-\la}, &\iota_\la(T_v^{z,\la, \dyn})&=T_{v^{-1}}^{z,-\la, \dyn},\\
\iota_z(T_v^{z,\la})&=T_{v^{-1}}^{-z,\la}, &\iota_z(T_v^{z,\la, \dyn})&=T_{v^{-1}}^{-z,\la, \dyn}.
\end{align*}
\begin{lemma}\label{lem:a-iota}
We have the following identity
\begin{align*}
{}^w\bfg \cdot a^{z,\pm\la}_{v,w}&=\bfg\cdot{}^{(v^{-1})^\dyn w}a^{z, \mp\la}_{v^{-1}, w^{-1}}.
\end{align*}
\end{lemma}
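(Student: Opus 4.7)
The plan is to deduce the lemma by unpacking the identity $\iota_\la(T^{z,\la}_v)=T^{z,-\la}_{v^{-1}}$ stated in \S\ref{subsec:braid} at the level of bidegree components and matching scalar coefficients. Since both sides of the asserted identity depend only on the $(w,v)$-bidegree piece, this reduces the lemma to a direct computation.

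First I would rewrite each summand $\de_v^\dyn a^{z,\la}_{v,w}\de_w$ of $T^{z,\la}_v$, which is naturally a rational section of $\bbS''_{w,v}$, in the $\bbS'_{w,v}$ convention in order to apply the explicit formula \eqref{eq:iota_la} for $\iota_\la$. Using the dictionary in \S\ref{subsec:Sproductrational}, the passage from $\de_v^\dyn b\de_w$ (in $\bbS''_{w,v}$) through $\bbS_{w,v}$ to the corresponding expression in $\bbS'_{w,v}$ converts the coefficient $b$ into ${}^{w^{-1}v^\dyn}b$, and then \eqref{eq:iota_la} yields
\[
\iota_\la\bigl(\de_v^\dyn a^{z,\la}_{v,w}\de_w\bigr)=\de_{v^{-1}}^\dyn\cdot{}^{w^{-1}v^\dyn}a^{z,\la}_{v,w}\cdot\frac{\bfg}{{}^{w^{-1}}\bfg}\cdot\de_{w^{-1}}.
\]

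Next, matching the $(w^{-1},v^{-1})$-bidegree piece of this expression with the corresponding summand $\de_{v^{-1}}^\dyn a^{z,-\la}_{v^{-1},w^{-1}}\de_{w^{-1}}$ of $T^{z,-\la}_{v^{-1}}$ yields the scalar identity
\[
{}^{w^{-1}v^\dyn}a^{z,\la}_{v,w}\cdot\bfg={}^{w^{-1}}\bfg\cdot a^{z,-\la}_{v^{-1},w^{-1}}.
\]
Applying the spectral action of $w$ to both sides and using that the spectral and dynamical actions commute, together with the fact that $\bfg$ depends only on the $z$-variables and is therefore fixed by $v^\dyn$ (Lemma \ref{lem:GHrational}), one obtains the $+\la$ case. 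The $-\la$ case comes from the same argument run with $T^{z,-\la}_v$ in place of $T^{z,\la}_v$ and the companion identity $\iota_\la(T^{z,-\la}_v)=T^{z,\la}_{v^{-1}}$.

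The only real obstacle is bookkeeping: one must keep the three flavors $\bbS$, $\bbS'$, $\bbS''$ straight and correctly track whose variables each Weyl group action touches when translating an element of $\bbS''$ into $\bbS'$ before invoking \eqref{eq:iota_la}. Once the conventions are pinned down, the derivation is purely formal.
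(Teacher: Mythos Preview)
Your approach is essentially the same as the paper's: both unpack the identity $\iota_\la(T^{z,\pm\la}_v)=T^{z,\mp\la}_{v^{-1}}$ on bidegree components. The paper applies $\iota_\la$ to $T^{z,\mp\la}_{v^{-1}}$ and reads off the coefficient of $\de_v^\dyn(\cdot)\de_w$ directly; you apply it to $T^{z,\la}_v$ and then undo the Weyl twists. Either direction works.

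There is one imprecision in your last step. After applying only the spectral action of $w$ to
\[
{}^{w^{-1}v^\dyn}a^{z,\la}_{v,w}\cdot\bfg={}^{w^{-1}}\bfg\cdot a^{z,-\la}_{v^{-1},w^{-1}},
\]
you obtain
\[
{}^{v^\dyn}a^{z,\la}_{v,w}\cdot{}^w\bfg=\bfg\cdot{}^{w}a^{z,-\la}_{v^{-1},w^{-1}},
\]
which is not yet the claimed identity: the left side still carries ${}^{v^\dyn}$ and the right side is missing the ${}^{(v^{-1})^\dyn}$. You must also apply the dynamical action ${}^{(v^{-1})^\dyn}$; then the invariance of $\bfg$ under $W^\dyn$ (which you correctly cite) kills the dynamical twist on the $\bfg$-factors and you land on
\[
{}^w\bfg\cdot a^{z,\la}_{v,w}=\bfg\cdot{}^{(v^{-1})^\dyn w}a^{z,-\la}_{v^{-1},w^{-1}}.
\]
So the needed operator is ${}^{(v^{-1})^\dyn w}$, not just ${}^w$. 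With that correction the argument is complete and matches the paper's.
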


\begin{proof} We have
\begin{align*}
\sum_w\de_v^\dyn a^{z,\pm \la, }_{v,w}\de_w&=T^{z,\pm \la, }_v=\iota_{\la}(T^{z,\mp \la, }_{v^{-1}})=\iota_{\la}(\sum_w\de_{v^{-1}}^\dyn a^{z,\mp \la, }_{v^{-1}, w^{-1}}\de_{w^{-1}})\\
&\overset{\eqref{eq:iota_la}}=\sum_w\de_v^\dyn\frac{\bfg}{{}^w\bfg}\cdot  {}^{(v^{-1})^\dyn w }a^{z,\mp \la, }_{v^{-1}, w^{-1}}\de_w.
\end{align*}
Comparing the coefficients of $\de_v^\dyn ?\de_w$, we obtain the conclusion. 
\end{proof}
\begin{remark}\label{rem:acoeffrelation}We can also verify 
\begin{align*}
\bfg \cdot a^{z,\pm \la, \dyn}_{v,w}&={}^{v^{-1}}\bfg\cdot {}^{w^\dyn v^{-1}}a^{z,\mp\la, \dyn}_{v^{-1}, w^{-1}},
&{}^{(v^{-1})^\dyn }\bfh\cdot a^{\pm z,\la}_{v,w}&=\bfh\cdot {}^{(v^{-1})^\dyn w}a^{\mp z, \la}_{v^{-1}, w^{-1}}, \\
 \bfh\cdot a_{v,w}^{\pm z, \la, \dyn}&={}^{w^\dyn}\bfh \cdot {}^{v^{-1}w^\dyn}a_{v^{-1}, w^{-1}}^{\mp z, \la, \dyn}.&&
\end{align*}
\end{remark}

Following from Proposition \ref{prop:adj}, we have 
\begin{corollary}\label{cor:Tadj}
For any rational sections $f,g, f^\dyn, g^\dyn$ of $\bbM$,  $\bbM(-\la)$, $\bbM^\dyn$ and $\bbM(-z)^\dyn$, correspondingly, we have
\begin{align}\label{eq:adjT-la}
\lag T^{z,\la}_{v^{-1}}\bullet f,g\rag_{ \la}&={}^{(v^{-1})^\dyn}\lag f, T^{z,-\la}_v\bullet g\rag_{ \la}, \\
 \lag T^{z,\la, \dyn}_{v^{-1}}\bullet^\dyn f^\dyn,g^\dyn\rag_{ z}&={}^{v^{-1}}\lag f^\dyn, T^{-z, \la,\dyn}_v\bullet^\dyn g^\dyn\rag_{ z}.\end{align}
\end{corollary}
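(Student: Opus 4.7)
The plan is to deduce Corollary~\ref{cor:Tadj} directly from the graded adjointness formulas of Proposition~\ref{prop:adj} and its $\bullet^\dyn$-variants collected in the preceding remark, specifically \eqref{eq:adjD-la} and \eqref{eq:adjD-z-dyn}. The only substantive ingredient beyond those is the behavior of $\iota_\la$ and $\iota_z$ on the products $T^{z,\la}_{v^{-1}}$ and $T^{z,\la,\dyn}_{v^{-1}}$, which has already been recorded in \S~\ref{subsec:braid}.

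For the first identity, decompose according to \eqref{eq:Tlinearcomb}:
\[
T^{z,\la}_{v^{-1}} \;=\; \sum_{w\le v^{-1}} \de_{v^{-1}}^\dyn a^{z,\la}_{v^{-1},w} \de_w ,
\]
so each summand $p_w := \de_{v^{-1}}^\dyn a^{z,\la}_{v^{-1},w} \de_w$ is a rational section of $\bbS''_{w,v^{-1}}$, and hence (via the equivalence in Lemma~\ref{lem:monoidal_equivalence}) a rational section of $\bbS'_{w,v^{-1}}$. Its $W^\dyn$-degree is $v^{-1}$, so applying Proposition~\ref{prop:adj} with $v$ there replaced by $v^{-1}$ yields
\[
{}^{v^\dyn}\lag p_w \bullet f,\, g\rag_{\la} \;=\; \lag f,\, \iota_\la(p_w)\bullet g\rag_{\la}.
\]
Summing over $w$ and using the anti-involution identity $\iota_\la(T^{z,\la}_{v^{-1}}) = T^{z,-\la}_{v}$ from \S~\ref{subsec:braid} (together with additivity of both $\iota_\la$ and the two sides of the pairing), then applying ${}^{(v^{-1})^\dyn}$ to both sides, produces \eqref{eq:adjT-la}.

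For the second identity the argument is entirely parallel, but now one starts with the decomposition $T^{z,\la,\dyn}_{v^{-1}} = \sum_w \de_{v^{-1}} a^{z,\la,\dyn}_{v^{-1},w}\de_w^\dyn$, whose summands have constant $W$-degree $v^{-1}$ (and varying $W^\dyn$-degree). One invokes the $\bullet^\dyn$-adjunction \eqref{eq:adjD-z-dyn} on each summand, sums them, and substitutes $\iota_z(T^{z,\la,\dyn}_{v^{-1}}) = T^{-z,\la,\dyn}_{v}$ from \S~\ref{subsec:braid}.

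The only step that requires any care is the bookkeeping of the twist on the right-hand side: the adjointness relations \eqref{eq:adjD-la}--\eqref{eq:adjD-z-dyn} only assert the identity up to the prescribed $w^*$ or $v^{\dyn*}$ action, so one must verify that, when summed over the graded pieces $p_w$ of $T^{z,\la}_{v^{-1}}$, the prefactor ${}^{v^\dyn}$ (respectively ${}^{v^{-1}}$) is the same for every summand and can be pulled outside the sum. This is automatic here because the $W^\dyn$-degree of the operator $T^{z,\la}_{v^{-1}}$ (respectively the $W$-degree of $T^{z,\la,\dyn}_{v^{-1}}$) is constant across the decomposition \eqref{eq:Tlinearcomb}, so no obstacle to summation arises and the corollary follows.
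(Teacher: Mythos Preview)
Your argument is correct and follows the same approach as the paper, which simply records the corollary as an immediate consequence of Proposition~\ref{prop:adj} (and its variants) together with the identities $\iota_\la(T^{z,\la}_{v^{-1}})=T^{z,-\la}_{v}$ and $\iota_z(T^{z,\la,\dyn}_{v^{-1}})=T^{-z,\la,\dyn}_{v}$ from \S~\ref{subsec:braid}. Your explicit decomposition into graded pieces $p_w$ and the observation that the relevant $W^\dyn$- (resp.\ $W$-) degree is constant across the sum is the correct way to justify applying the single-degree adjointness formula termwise; the paper leaves this implicit.
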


\section{Elliptic classes}\label{sec:ell_class}
In this section, we define  the elliptic  classes for $T^*G/B$ and its Langlands dual as rational sections of the periodic modules. We also prove our main results (Theorem \ref{thm:inverse_maps} and Theorem \ref{thm:main}). Our definition of elliptic classes relies on a pre-fixed rational section $\fc$ of $\bbL.$

\subsection{Main theorem on elliptic classes}

We fix a  rational section $\fc$ of $\bbL$.  Recall the rational sections $\bfh$ and $\bfg$ from \eqref{eqn:h_g}. 
\begin{lemma}\label{lem:rationalsec}The four functions 
\[
\frac{\bfg}{\fc}, \quad \frac{1}{\bfh \fc}, \quad \frac{{}^{w_0^\dyn}\bfh\cdot {}^{w_0^\dyn \fc}}{\bfg}, \quad \frac{ \bfg\cdot {}^{w_0}\fc}{{}^{w_0^\dyn}\bfh},
\]are rational sections of the following line bundles:
\[
\bbM(-\la)_e, \quad\bbM(-z)_e^\dyn, \quad \bbM_{w_0}, \quad \bbM_{w_0}^\dyn, \quad. 
\]
\end{lemma}
\begin{proof}Lemma \ref{lem:D2} give  $\bbL(-\la)\cong \bbL^{-1}\otimes \calG$ and $\bbL(-z)\cong\bbL^{-1}\otimes \calH^{-1}$, which have the rational sections $\frac{\bfg}{\fc}$ and $\frac{1}{\fc \bfh}$, respectively.

Note that $w_0^*\calG=\calG^{-1}$, $w_0^\dynst \calH=\calH^{-1}$ and $w_0^*\calH=\calH$. We have
\begin{align*}
\bbM_{w_0}&=w_0^*\bbL\overset{\text{Def. }\ref{def:bbS}}=w_0^*\bbS_{w_0, w_0}\otimes w_0^\dynst \bbL\overset{\text{Lem. } \ref{lem:basicS}}=w_0^*(\calG\otimes \calH^{-1})\otimes w_0^\dynst \bbL=\calG^{-1}\otimes w_0^\dynst \calH\otimes w_0^\dynst \bbL,
\end{align*}
so $\frac{{}^{w_0^\dyn}\bfh\cdot {}^{w_0^\dyn \fc}}{\bfg}$ is a rational section. 

Lastly, we have 
\begin{align*}
\bbM_{w_0}^\dyn=w_0^\dynst\bbL\overset{\text{Def. }\ref{def:bbS}}=w_0^\dynst \bbS_{w_0, w_0}\otimes w_0^*\bbL\overset{\text{Lem. } \ref{lem:basicS}}=w_0^\dynst (\calG\otimes \calH^{-1})\otimes w_0^*\bbL=\calG\otimes w_0^\dynst \calH^{-1}\otimes w_0^*\bbL,
\end{align*}
so $\frac{ \bfg\cdot {}^{w_0}\fc}{{}^{w_0^\dyn}\bfh}$ is a rational section. 
\end{proof}

\begin{definition}\label{def:ellclass}
We  define the following elliptic classes
\begin{align}\label{eq:oppEll}
\Ell_{v}^{z,\la}&=T^{z,\la}_{vw_0}\bullet \frac{{}^{w_0^\dyn}\bfh\cdot {}^{w_0^\dyn}\fc}{\bfg}f_{w_0},&\Ell_{v}^{z,\la,\dyn}&=T^{z,\la,\dyn}_{vw_0}\bullet ^\dyn\frac{{}^{w_0}\fc\cdot \bfg}{{}^{w_0^\dyn}\bfh} f_{w_0}^\dyn. 
\end{align}
\end{definition}
By Lemma~\ref{lem:rationalsec}, and the definition of the actions $\bullet$ and $\bullet^\dyn$ in \S~\ref{subsec:act_periodic},   they  are rational sections of $\bbM$ and $\bbM^\dyn$ respectively. Here recall that $f_{w_0}$ and $f_{w_0}^\dyn$ are bookkeeping devices introduced in \S~\ref{subsec:Sproductrational}, indicating that the coefficients are considered as rational sections of $\bbM_{w_0}$ and $\bbM_{w_0}^\dyn$ respectively. 

Recall that under the Langlands duality $i$ (Lemma \ref{lem:i*T}) the classes $\Ell_{v}^{z,\la,\dyn}$ are identified with the elliptic classes of the Langlands dual system. One also note that $\Ell_{v}^{z,\la}$ and $\Ell_{v}^{z,\la,\dyn}$ live in $\oplus_{w\ge v}\bbM_w$ and  $\oplus_{w\ge v}\bbM_w^\dyn$ respectively.

To study the elliptic classes, recall the rational maps $\bbT^{z,\la}$ and $\bbT^{z,\la,\dyn}$ from Definition~\ref{def:bbT}. 
The following is the main theorem of this section, regarding elliptic classes.
\begin{theorem}\label{thm:main}
We have
\begin{align*}
\bbT^{z,\la}(\Ell^{z,\la}_{u})&={}^{u^\dyn}\fc f_{u^{-1}}^\dyn,&\bbT^{z,\la,\dyn}(\Ell_{u}^{z,\la,\dyn})&={}^{u}\fc f_{u^{-1}}. 
\end{align*}
\end{theorem} 

We give the proof in \S~\ref{subsec:trans}, which relies on Poincar\'e duality of elliptic classes below.

Recall that (Remark~\ref{rmk:ell_coh}) $\bbM$ and $\bbM^\dyn$ are models of equivariant elliptic cohomologies of $T^*G/B$ and $T^*G^\vee/B^\vee$ respectively. The theorem, rouphly speaking, asserts that the rational isomorphisms in Theorem~\ref{thm:inverse_maps} swamps elliptic classes with the fixed-point classes of the Langlands dual root datum. 

\subsection{Poincar\'e duality of elliptic classes}
Similar to Definition~\ref{def:ellclass}, 
we also define the following.
\begin{definition}\label{def:ellclass2}
We define the elliptic  classes as rational sections of $\bbM(-\la)$ and $\bbM^\dyn(-z)$, as follows:
\begin{align*}
\bfE_{v}^{z,-\la}&=T^{z,-\la}_{v}\bullet \frac{\bfg}{\fc} f_e,&
\bfE_{v}^{-z,\la,\dyn}&=T^{-z,\la,\dyn}_{v}\bullet^\dyn \frac{1}{\bfh \fc} f_e^\dyn.
\end{align*}
\end{definition}
By Lemma \ref{lem:rationalsec}, we know they are well defined and are support on $\oplus_{w\le v}\bbM_w$ and $\oplus_{w\le v}\bbM_w^\dyn$, respectively. 
The superscript $(z,-\la)$ (resp. $(-z,\la,\dyn)$) is to indicate that they are rational sections of $\bbM(-\la)$ (resp. $\bbM(-z)^\dyn$). 

We are going to show that via the Poincar\'e pairing, they are dual to the corresponding elliptic  classes $\calE_{u}^{z,\la}$ and $\calE_{u}^{z,\la,\dyn}$, respectively , and therefore are referred to as the opposite elliptic classes.

By using the definition, together with the expression of $T^?_v$ in \eqref{eq:Tlinearcomb}, one can write down $\bfE^?_{v}$  as linear combinations of $f_?$ or $f^\dyn_?$. We compute $\bfE^{z,-\la}_{v}$ as an example (see also \eqref{eq:res1} and $\eqref{eq:res2}$ below):
\begin{align}
\bfE^{z,-\la}_{v}&=T_{v}^{{z,-\la}}\bullet \frac{\bfg}{\fc}f_e\overset{\eqref{eq:Tlinearcomb}}=\sum_{w}\de_{v}^\dyn a^{z,-\la}_{v,w}\de_{w}\bullet \frac{\bfg}{\fc} f_e\overset{\eqref{eq:bullet}}=\sum_w {}^{v^\dyn w^{-1}}a^{z,-\la}_{v, w}\cdot \frac{\bfg}{{}^{v^\dyn}\fc} f_{w}\\
\label{eq:bfE-z-la}&\overset{\text{Lem.}~\ref{lem:a-iota}}=\sum_w\frac{{}^{w^{-1}}\bfg}{{}^{v^\dyn}\fc }\cdot a^{z,\la}_{v^{-1},w^{-1}}\cdot f_{w}. 
\end{align}
The rational sections in front of $f_{w}$ are called restriction coefficients.

\begin{theorem}[Poincar\'e Duality]\label{thm:Poin}
We have the following dualities
\begin{align}
\lag\bfE_{v}^{z, -\la},\Ell_{u}^{ z, \la}\rag_{\la}&=\de_{v,u}, &
\lag\bfE_{v}^{ -z, \la,\dyn},\Ell_{u}^{z, \la,\dyn}\rag_{z}^\dyn=\de_{v,u}.
\end{align}
\end{theorem}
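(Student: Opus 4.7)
The plan is to exploit the adjointness of the elliptic DL operators (Corollary~\ref{cor:Tadj}) to transfer the $T^{z,\la}_{v^{-1}}$-action across the pairing, then use the Weyl-group structure of the $T^\sharp_w$ to collapse the second argument into a single operator, and finally read off the result by matching Bruhat degrees. Explicitly, since $\iota_\la(T^{z,\la}_{v^{-1}})=T^{z,-\la}_v$ (\S~\ref{subsec:braid}), formula \eqref{eq:adjT-la} gives
\[
\lag \bfE_v^{z,\la},\Ell_u^{z,-\la}\rag_\la
={}^{(v^{-1})^\dyn}\lag \fc f_e,\, T^{z,-\la}_v\bullet \Ell_u^{z,-\la}\rag_\la.
\]
The fact that $(T^\sharp_\al)^2=1$ together with the braid relations (\S~\ref{subsec:braid}) makes $w\mapsto T^\sharp_w$ a genuine $W$-representation (not merely a Hecke-algebra one), so $T^{z,-\la}_v\cdot T^{z,-\la}_{u^{-1}w_0}=T^{z,-\la}_{vu^{-1}w_0}$, and the second slot becomes $T^{z,-\la}_{vu^{-1}w_0}\bullet \tfrac{\bfh}{{}^{w_0^\dyn}\fc}f_{w_0}$.

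Next, expanding via \eqref{eq:Tlinearcomb} and applying \eqref{eq:bullet}, the result is a sum of sections supported on $f_{ww_0}$ with $w\le vu^{-1}w_0$. The diagonal support of the pairing \eqref{eq:D-la-pair} selects only $ww_0=e$, i.e.\ $w=w_0$; since $w_0$ is the Bruhat-maximum, $w_0\le vu^{-1}w_0$ forces $vu^{-1}w_0=w_0$, hence $v=u$. This delivers the orthogonality $\lag\bfE_v^{z,\la},\Ell_u^{z,-\la}\rag_\la=0$ for $v\ne u$ in one stroke. For $v=u$ the only surviving term uses $a^{z,-\la}_{w_0,w_0}=\bfg/\bfh$ from \eqref{eq:aw0}; applying \eqref{eq:bullet} and using ${}^{w_0^\dyn}\bfg=\bfg$ (Lemma~\ref{lem:GHrational}) together with $w_0^2=e$ shows the $f_e$-component of $T^{z,-\la}_{w_0}\bullet \tfrac{\bfh}{{}^{w_0^\dyn}\fc}f_{w_0}$ equals $\tfrac{\bfg}{\fc}f_e$, whence \eqref{eq:D-la-pair} yields $\lag\fc f_e,\tfrac{\bfg}{\fc}f_e\rag_\la=1$, preserved by the outer ${}^{(v^{-1})^\dyn}$.

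The second identity is proved by the same four-step pattern: apply the adjunction \eqref{eq:adjD-z-dyn} with $\iota_z(T^{z,\la,\dyn}_{v^{-1}})=T^{-z,\la,\dyn}_v$, compose into $T^{-z,\la,\dyn}_{vu^{-1}w_0}$, use the identical Bruhat-maximum argument on the pairing \eqref{eq:D-z-dyn-pair} (which is likewise diagonal in the $f^\dyn_?$-indices) to force $v=u$, and evaluate the diagonal contribution using $a^{-z,\la,\dyn}_{w_0,w_0}={}^{w_0^\dyn}\bfh/{}^{w_0}\bfg$ from \eqref{eq:aw0dyn}. I expect the main (modest) obstacle to be the bookkeeping of the $W$- and $W^\dyn$-actions on $\fc,\bfg,\bfh$ in the diagonal evaluation and its dynamical analogue --- where one needs to invoke the invariance properties of $\bfg,\bfh$ from Lemma~\ref{lem:GHrational} and the identity $w_0^2=e$ repeatedly, paired with the precise form of the opposite classes $\Ell_u^{z,-\la}$ and $\Ell_u^{-z,\la,\dyn}$ --- but every cancellation is dictated by the structure of the pairing and no further identity is needed.
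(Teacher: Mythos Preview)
Your proposal is correct and follows essentially the same approach as the paper's own proof: transfer $T^{z,\la}_{v^{-1}}$ across the pairing via the adjunction \eqref{eq:adjT-la}, combine into $T^{z,-\la}_{vu^{-1}w_0}$ using $(T^\sharp_\al)^2=1$ plus braid relations, use the Bruhat-maximality of $w_0$ to force $v=u$, and evaluate the diagonal term via $a^{z,-\la}_{w_0,w_0}=\bfg/\bfh$ from \eqref{eq:aw0}. Your bookkeeping of the diagonal term (invoking ${}^{w_0^\dyn}\bfg=\bfg$ from Lemma~\ref{lem:GHrational}) is exactly the implicit simplification the paper uses, and your outline of the dynamical case via \eqref{eq:adjD-z-dyn} and \eqref{eq:aw0dyn} is likewise parallel to the paper's ``proved similarly''.
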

\begin{proof}Note that $T^{z,-\la}_{v}$ is homogenous of degree $v^\dyn$, so we have
\begin{align*}
\lag \bfE^{z,-\la}_v, \Ell^{z,\la}_u\rag_{\la}&\overset{\text{Def.}~\ref{def:ellclass}}=\lag T^{z,-\la}_{v}\bullet \frac{\bfg}{\fc}f_e, T^{z,\la}_{uw_0}\bullet \frac{{}^{w_0^\dyn}\bfh\cdot {}^{w_0^\dyn }\fc}{\bfg}f_{w_0}\rag_{\la}\overset{\eqref{eq:adjT-la}}={}^{v^\dyn}\lag \frac{\bfg}{\fc} f_e, T^{z,\la}_{v^{-1}}T^{z,\la}_{uw_0}\bullet \frac{{}^{w_0^\dyn}\bfh\cdot {}^{w_0^\dyn }\fc}{\bfg}f_{w_0}\rag_{\la}.
\end{align*}
By the definition of the pairing $\lag\_,\_\rag_{\la}$ in \eqref{eq:D-la-pair}, it suffices to look at  the term of degree $e$ in  $T^{z,\la}_{v^{-1}}T^{z,\la}_{uw_0}\bullet \frac{{}^{w_0^\dyn}\bfh\cdot {}^{w_0^\dyn }\fc}{\bfg}f_{w_0}$, and by the definition of the $\bullet$-action in \eqref{eq:bullet}, it suffices to look at the  term of degree ${w_0}\in W$ (i.e., $\de_{w_0}$) in $T^{z,\la}_{v^{-1}}T^{z,\la}_{uw_0}=T^{z,\la}_{v^{-1}uw_0}$, which is equal to $0$ unless $u=v$. If $u=v$, then by \eqref{eq:aw0}, the term involving $\de_{w_0}^\dyn?\de_{w_0}$ is $\de_{w_0}^\dyn \frac{\bfg}{{}^{w_0^\dyn}\bfh}\de_{w_0}$, so in this case, we have 
\begin{align*}
\lag \bfE^{z,-\la}_v,\Ell^{z,\la}_v\rag_{\la}&={}^{v^\dyn}\lag \frac{\bfg}{\fc} f_e, \de_{w_0}^\dyn\frac{\bfg}{{}^{w_0^\dyn}\bfh}\de_{w_0}\bullet  \frac{{}^{w_0^\dyn}\bfh\cdot {}^{w_0^\dyn }\fc}{\bfg}f_{w_0}\rag_{\la}\overset{\eqref{eq:D-la-pair}}={}^{v^\dyn}(\frac{\bfg}{\fc}\cdot\fc \cdot \frac{1}{\bfg})=1.
\end{align*}

The second duality is proved similarly.
\end{proof}

\subsection{Auxilliary classes} \label{subsec:trans}

\begin{lemma}
The rational map $\bbT^{z,\la}$ is invertible.
\end{lemma}
\begin{proof}
The matrix $ (a^{z,\la}_{v,w})_{w,v\in W}$ is lower triangular  with nonzero (so invertible) diagonals. 
\end{proof}
Therefore, there exists for each $w,v\in W$ a $b_{w,v}^{z,\la}$, a rational section of 
\begin{equation}\label{eq:b}\sHom(\bbM_v^\dyn,\bbM_{w^{-1}})\cong v^{\dyn*}\bbS_{w,v}^{-1}=(w^{-1})^*\bbS_{w^{-1},v^{-1}}=\bbS'_{w^{-1}, v^{-1}}, 
\end{equation}
so that 
\[
\sum_w\fp^\dyn_v a^{z,\la}_{v,w}\fri_{w^{-1}}\circ\fp_{w^{-1}}b^{z,\la}_{w,u}\fri^\dyn_u=\de_{v,u}
\]
as a rational section of 
\[
\sHom(\bbM_u^\dyn,\bbM_{w^{-1}})\otimes \sHom(\bbM_{w^{-1}},\bbM_v^\dyn)\overset{\sim}\longrightarrow \sHom(\bbM_u^\dyn,\bbM_v^\dyn).
\]

Similar as $\bbT^{z,\la}$, the rational map $\bbT^{z,\la,\dyn}$ is also invertible. 
So there exists the inverse matrix $(b^{z,\la,\dyn}_{w,u})_{w,v\in W}$ with  $ b^{z,\la,\dyn}_{w,u}$ a rational section of 
\[u^* \bbS^{-1}_{u,w}\overset{\text{Lem. }\ref{lem:tensor}}=(w^{-1})^\dynst\bbS_{u^{-1}, w^{-1}}=\sHom(\bbM_u,\bbM^\dyn_{w^{-1}}).\]

\begin{remark}\label{rem:b}
Repeating the above,  the matrices $(b_{w,v}^{\pm z,\pm \la,\dyn})_{w,v}$ are defined as inverses of $(a_{w,v}^{\pm z,\pm \la, \dyn})$ (or without the superscript $\dyn$). Moreover,  variant forms of $\bbT^{z,\la}$ are defined similarly. See \S~\ref{subsec:invmat} below for more details.
\end{remark}

We define some auxiliary classes. 
\begin{align}\label{eq:T*}
  (T_u^{z,\la})^*&=\sum_{x\ge u}b^{z,\la}_{x^{-1}, u^{-1}}\cdot    {}^{u^\dyn}\fc  f_{x},&
(T_u^{z,\la, \dyn})^*&=\sum_{y\ge u}b^{z,\la, \dyn}_{y^{-1},u^{-1}}\cdot  {}^{u}\fc f^\dyn_{y}.
\end{align}

\begin{lemma}The classes  are rational sections of the modules $\bbM$ and $\bbM^\dyn$, respectively.
\end{lemma}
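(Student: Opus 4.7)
The plan is a line-bundle bookkeeping exercise: I identify which line bundle each factor of each summand is a rational section of, and then observe that the product lands in the correct summand of the periodic module.

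First I would locate the $b$-coefficients. By Remark~\ref{rem:b}, repeating the construction of \S~\ref{subsec:trans} with $T_v^{z,-\la}$ in place of $T_v^{z,\la}$, and using Theorem~\ref{thm:Trational} to see that $a^{z,-\la}_{v,w}$ is a rational section of $\bbS(-\la)''_{w,v}$, the inverse matrix has entries
\[b^{z,-\la}_{x,u}\in\bbS(-\la)'_{x^{-1},u^{-1}}\;\cong\;\sHom\bigl(\bbM(-\la)^\dyn_{u},\bbM(-\la)_{x^{-1}}\bigr),\]
where the identification is the analogue of \eqref{eq:b} after applying $D_\la^*$. A parallel argument, using the $\dyn$-version of \eqref{eq:b} recalled just before Remark~\ref{rem:b}, gives
\[b^{-z,\la,\dyn}_{y,u}\in\bbS(-z)''_{u^{-1},y^{-1}}\;\cong\;\sHom\bigl(\bbM(-z)_{u},\bbM(-z)^\dyn_{y^{-1}}\bigr).\]

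Next I would compute the line bundles of the scalar factors. Since $\fc$ is a rational section of $\bbL$, the section ${}^{(u^{-1})^\dyn}\fc$ lies in $u^{\dyn*}\bbL=\bbM^\dyn_u$; by Lemma~\ref{lem:GHrational}, $\bfg$ is a $W^\dyn$-invariant rational section of $\calG$. Combining this with Lemma~\ref{lem:D2}.(4) gives
\[\bbM(-\la)^\dyn_u\;=\;u^{\dyn*}\bigl(\bbL^{-1}\otimes\calG\bigr)\;=\;(\bbM^\dyn_u)^{-1}\otimes\calG,\]
so $\bfg/{}^{(u^{-1})^\dyn}\fc$ is a rational section of $\bbM(-\la)^\dyn_u$. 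The mirror calculation, using that $\bfh$ is a $W$-invariant rational section of $\calH$ together with Lemma~\ref{lem:D2}.(5), yields $\bbM(-z)_u=(\bbM_u)^{-1}\otimes\calH^{-1}$, whence $1/(\bfh\cdot{}^{u^{-1}}\fc)$ is a rational section of $\bbM(-z)_u$.

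To finish, applying the rational maps from the first step to the sections of the second step, each summand of $(T_u^{z,-\la})^*$ becomes a rational section of $\bbM(-\la)_{x^{-1}}$ (with summand marker $f_{x^{-1}}$), and each summand of $(T_u^{-z,\la,\dyn})^*$ a rational section of $\bbM(-z)^\dyn_{y^{-1}}$ (with marker $f^\dyn_{y^{-1}}$); the sums are therefore rational sections of $\bbM(-\la)$ and $\bbM(-z)^\dyn$ respectively. No serious obstacle is anticipated: the only point demanding care is the asymmetry between $\calG=\tilde\calO(2\hbar\rho)$ (invariant under $w^{\dyn*}$ but twisting under $w^*$) and $\calH=\tilde\calO(2\hbar\rho^\vee)$ (the opposite), which is precisely what makes the two formulas in the lemma dual to one another.
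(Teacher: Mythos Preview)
Your proposal is correct and follows essentially the same line-bundle bookkeeping as the paper's own proof: identify $b^{z,-\la}_{x,u}$ as a rational section of $\bbS(-\la)'_{x^{-1},u^{-1}}$, identify $\bfg/{}^{(u^{-1})^\dyn}\fc$ as a rational section of $u^{\dyn*}\bbL(-\la)$ via Lemma~\ref{lem:D2}, and tensor. The only cosmetic difference is that you phrase the final step through the identification $\bbS(-\la)'_{x^{-1},u^{-1}}\cong\sHom(\bbM(-\la)^\dyn_u,\bbM(-\la)_{x^{-1}})$, whereas the paper writes out the tensor product $(x^{-1})^*\bbS(-\la)_{x^{-1},u^{-1}}\otimes\calG\otimes u^{\dyn*}\bbL^{-1}\cong(x^{-1})^*\bbL(-\la)$ directly; these are the same computation.
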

\begin{proof}
By definition (see \eqref{eq:b} and Remark \ref{rem:b}), we know $b^{z,\la}_{x ^{-1} ,u^{-1} }$ is a rational section of $\bbS'_{x, u}=x^*\bbS_{x, u}$. So $b^{z,\la}_{x^{-1} ,u^{-1} }\cdot {}^{u^\dyn}\fc$ is a rational section of 
\[
x^*\bbS_{x, u}\otimes (u^{-1} )^\dynst\bbL=x^*((x^{-1} )^*(u^{-1} )^\dynst \bbL^{-1}\otimes \bbL)\otimes (u^{-1} )^\dynst \bbL=x^*\bbL. 
\]
This shows that $(T^{z,\la}_u)^*$ is a rational section of $\bbM$. One can prove similar conclusion for $(T^{z,\la,\dyn}_u)^*$. 
\end{proof}

\begin{lemma}\label{lem:bfE-T*}We have the following dualities
\begin{align*}
\lag\bfE_v^{z, -\la}, (T^{z, \la}_u)^*\rag_{\la}&=\de_{v,u}. &
\lag\bfE_v^{-z, \la, \dyn}, (T^{ z, \la, \dyn}_u)^*\rag^\dyn_{z}&=\de_{v,u}.
\end{align*}
\end{lemma}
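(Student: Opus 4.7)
The plan is to reduce the duality to the defining matrix identity between the $a$- and $b$-coefficients, together with a one-line calculation of the Poincar\'e pairing on $\bbM_e\otimes\bbM(-\la)_e$. I will write out the argument for the first identity; the second is obtained by the same template after swapping the roles of the two factors.

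First, I apply the adjointness \eqref{eq:adjT-la} from Corollary~\ref{cor:Tadj} to the definition $\bfE_v^{z,\la}=T^{z,\la}_{v^{-1}}\bullet \fc f_e$:
\[
\lag\bfE_v^{z,\la},(T^{z,-\la}_u)^*\rag_{\la}={}^{(v^{-1})^\dyn}\lag \fc f_e,\; T^{z,-\la}_v\bullet (T^{z,-\la}_u)^*\rag_{\la}.
\]
Since $\lag \fc f_e,\_\rag_\la$ kills all $f_x$-components with $x\ne e$, I only need the $f_e$-component of $T^{z,-\la}_v\bullet (T^{z,-\la}_u)^*$.

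Next, I expand using $T^{z,-\la}_v=\sum_{w\le v}\de_v^\dyn a^{z,-\la}_{v,w}\de_w$ and $(T^{z,-\la}_u)^*=\sum_{x\ge u} b^{z,-\la}_{x,u}\cdot\frac{\bfg}{{}^{(u^{-1})^\dyn}\fc}\,f_{x^{-1}}$, and apply the action formula \eqref{eq:bullet}. Each summand contributes to degree $wx^{-1}$, so the $f_e$-component is obtained from the pairs with $x=w$. Using that $\bfg$ is $W^\dyn$-invariant (Lemma~\ref{lem:GHrational}), this component equals
\[
{}^{v^\dyn}\!\Big(\sum_{w}a^{z,-\la}_{v,w}\,b^{z,-\la}_{w,u}\Big)\cdot\frac{\bfg}{{}^{(vu^{-1})^\dyn}\fc}\,f_e.
\]

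The crucial input is that $(b^{z,-\la}_{w,u})$ is the inverse matrix of $(a^{z,-\la}_{v,w})$ by the construction in \S~\ref{subsec:trans}, so the inner sum equals $\de_{v,u}$. When $v=u$ the $f_e$-component reduces to $\frac{\bfg}{\fc}f_e$, and the pairing formula \eqref{eq:D-la-pair} gives
\[
\lag \fc f_e,\tfrac{\bfg}{\fc}f_e\rag_\la=\fc\cdot\tfrac{\bfg}{\fc}\cdot\tfrac{1}{\bfg}=1,
\]
so ${}^{(v^{-1})^\dyn}(1)=1$; when $v\ne u$ the sum vanishes. This proves the first identity. For the second identity, I repeat the argument with \eqref{eq:adjT-la} for $T^{z,\la,\dyn}$, the action on $\bbM^\dyn$ from \eqref{eq:bulletdyn}, the matrix inversion of $(a^{z,\la,\dyn}_{v,w})$ by $(b^{z,\la,\dyn}_{w,u})$, and the pairing \eqref{eq:D-z-dyn-pair}; the analogous cancellation between $\bfh$ and $\fc$ yields the required~$\de_{v,u}$.

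The main obstacle is purely bookkeeping: tracking the $W\times W^\dyn$-degrees of all rational sections, correctly interpreting the composition rule ${}^a({}^b f)={}^{ab}f$ for the Weyl actions on $\bfg$, $\bfh$, and $\fc$, and verifying that the matrix identity $\sum_w a b=\de$ can indeed be applied termwise at the correct degree so that the pairing collapses to the normalization $\frac{1}{{}^{e}\bfg}\cdot\bfg=1$. No new geometric input is needed beyond the adjointness of Corollary~\ref{cor:Tadj} and the defining property of the $b$-matrix.
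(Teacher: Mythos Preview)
Your argument is correct and reduces, as it should, to the matrix identity $\sum_w a^{z,-\la}_{v,w}b^{z,-\la}_{w,u}=\de_{v,u}$. The route differs slightly from the paper's: instead of invoking adjointness, the paper first rewrites $\bfE_v^{z,\la}$ via \eqref{eq:bfE-z-la} (which already has $a^{z,-\la}$-coefficients, courtesy of Lemma~\ref{lem:a-iota}) and then pairs directly term by term; the factors $\frac{{}^w\bfg}{\bfg}$ and $\frac{1}{{}^w\bfg}$ from the pairing cancel, leaving exactly the same sum $\sum_w a^{z,-\la}_{v,w}b^{z,-\la}_{w,u}$. Your approach via Corollary~\ref{cor:Tadj} is the same one the paper uses for Theorem~\ref{thm:Poin}, so you have effectively unified the two dualities under one mechanism; the paper's proof is marginally shorter here because \eqref{eq:bfE-z-la} was already available.

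One small slip in your sketch of the second identity: after moving $T^{z,\la,\dyn}_{v^{-1}}$ across via the adjunction \eqref{eq:adjD-z-dyn} you land on $T^{-z,\la,\dyn}_v$, so the relevant matrix inversion is between $(a^{-z,\la,\dyn}_{v,w})$ and $(b^{-z,\la,\dyn}_{w,u})$, not the $(z,\la,\dyn)$ pair you wrote. Also note that the twist there is by ${}^{v^{-1}}$ (the $W$-degree of $T^{z,\la,\dyn}_{v^{-1}}$), not ${}^{(v^{-1})^\dyn}$. Neither affects the validity of the argument.
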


\begin{proof}We prove the first one as an example:
\begin{align*}
\lag \bfE_v^{z,-\la}, (T_u^{z,\la})^*\rag_{\la}&\overset{\eqref{eq:bfE-z-la}}= \lag\sum_w\frac{{}^{w^{-1} }\bfg}{{}^{v^\dyn}\fc}\cdot a^{z,\la}_{v^{-1} ,w^{-1} } \cdot f_{w}, \sum_{x}b^{z,\la}_{x^{-1} ,u^{-1} }\cdot{}^{u^\dyn}\fc  f_{x}\rag_{\la}\\
&\overset{\eqref{eq:D-la-pair}}=\sum_w\frac{{}^{w^{-1} } \bfg} {{}^{v^\dyn}\fc}\cdot a^{z,\la}_{v^{-1} ,w^{-1} }\cdot b^{z,\la}_{w^{-1} , u^{-1} }\cdot {}^{u^\dyn}\fc \cdot\frac{1}{{}^{w^{-1} }\bfg}=\de_{v,u}.\\
\end{align*}

\end{proof}

Combining Lemma \ref{lem:bfE-T*} and Theorem \ref{thm:Poin}, we get the following.
\begin{corollary}\label{cor:opp=*}We have 
\[
\Ell^{ z, \la}_v=(T^{ z, \la}_v)^*, \quad \Ell^{ z, \la,\dyn}_v=(T^{z, \la,\dyn}_v)^*. 
\]
\end{corollary}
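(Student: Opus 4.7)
The plan is to deduce the corollary directly from the two duality statements that immediately precede it. Theorem~\ref{thm:Poin} gives $\lag \bfE^{z,\la}_u, \Ell^{z,-\la}_v\rag_{\la}=\de_{u,v}$, while Lemma~\ref{lem:bfE-T*} gives $\lag \bfE^{z,\la}_u, (T^{z,-\la}_v)^*\rag_{\la}=\de_{u,v}$. Subtracting and setting $\eta_v:=\Ell^{z,-\la}_v-(T^{z,-\la}_v)^*$, a rational section of $\bbM(-\la)$, the task reduces to proving the implication
\[
\lag \bfE^{z,\la}_u,\eta_v\rag_{\la}=0\ \text{for every }u\in W\ \Longrightarrow\ \eta_v=0.
\]
That is, one must show the classes $\{\bfE^{z,\la}_u\}_{u\in W}$ are ``non-degenerate'' on the right with respect to the Poincar\'e pairing.

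For this I would invoke the explicit expansion~\eqref{eq:bfE-z-la}, which shows that $\bfE^{z,\la}_u$ is supported on the summands $\bbM_{w^{-1}}$ with $w\le u$, and that its component in the top summand $\bbM_{u^{-1}}$ is the nonzero rational section $\tfrac{{}^u\bfg}{\bfg}\cdot a^{z,-\la}_{u,u}\cdot {}^{(u^{-1})^\dyn}\fc$ (the diagonal coefficient $a^{z,-\la}_{u,u}$ is a product over $\al\in\Phi(u)$ of the explicit nonzero factors read off the simple DL operators in~\S~\ref{subsec:braid}). Writing $\eta_v=\sum_w d_w f_{w^{-1}}$ with $d_w$ a rational section of $\bbM(-\la)_{w^{-1}}$, and unwinding the pairing formula~\eqref{eq:D-la-pair}, the orthogonality relations read as a Bruhat-triangular system
\[
\sum_{w\le u}\frac{{}^w\bfg}{\bfg}\cdot a^{z,-\la}_{u,w}\cdot {}^{(u^{-1})^\dyn}\fc\cdot d_w\cdot \tfrac{1}{{}^{w^{-1}}\bfg}=0,\qquad u\in W,
\]
in the unknowns $d_w$. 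Since the coefficient matrix is triangular with nonzero diagonal entries, back-substitution in Bruhat order forces $d_w=0$ for all $w$, hence $\eta_v=0$. This yields the first identity $\Ell^{z,-\la}_v=(T^{z,-\la}_v)^*$.

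The second identity follows by the symmetric argument with $\bbM^\dyn$ and $\bbM(-z)^\dyn$ in place of $\bbM$ and $\bbM(-\la)$, using Theorem~\ref{thm:Poin} and Lemma~\ref{lem:bfE-T*} in their dynamical incarnations with the pairing $\lag-,-\rag_z^\dyn$; the triangularity of the matrix $(a^{z,\la,\dyn}_{u,w})$ plays the same role as before. The main (and really only) point requiring attention is the invertibility of the transition matrix, i.e.\ the nonvanishing of the diagonal restriction coefficients, which is immediate from the explicit formulas for $T^{z,-\la}_\al$ and $T^{-z,\la,\dyn}_\al$; everything else is bookkeeping about the Poincar\'e pairing and the $W$-gradings.
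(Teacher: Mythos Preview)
Your proposal is correct and follows the same route as the paper: both $\Ell^{z,-\la}_v$ and $(T^{z,-\la}_v)^*$ are shown to be Poincar\'e-dual to the family $\{\bfE^{z,\la}_u\}_u$, and uniqueness of dual bases (via the Bruhat-triangularity of $(a^{z,-\la}_{u,w})$) forces them to agree. The paper records this in one line (``Combining Lemma~\ref{lem:bfE-T*} and Theorem~\ref{thm:Poin}''), while you spell out the non-degeneracy step explicitly. One harmless slip: in your displayed system the last factor should be $\tfrac{1}{{}^{w}\bfg}$ rather than $\tfrac{1}{{}^{w^{-1}}\bfg}$, since the summand index is $x=w^{-1}$ and the pairing contributes $\tfrac{1}{{}^{x^{-1}}\bfg}=\tfrac{1}{{}^{w}\bfg}$; this does not affect the argument.
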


With Corollary~\ref{cor:opp=*}, we are ready to prove the main result on elliptic classes Theorem~\ref{thm:main}.
\begin{proof}[Proof of Theorem~\ref{thm:main}]
We have
\begin{align*}
\bbT^{z,\la}(\Ell^{z,\la}_{u})&\overset{\text{ Def.}~\ref{def:bbT}}=(\sum_v T_{v^{-1} }^{z,\la})((T_u^{z,\la})^*)\overset{\text{ Cor.}~\ref{cor:opp=*}}=\sum_{v,w}\fp_{v^{-1} }^\dyn a^{z,\la}_{v^{-1} ,w^{-1} }\fri_{w}(\sum_xb^{z,\la}_{x^{-1} ,u^{-1} }\cdot  {}^{u^\dyn}\fc f_{x})\\
&=\sum_v\sum_wa^{z,\la}_{v^{-1} ,w^{-1} }b^{z,\la}_{w^{-1} ,u^{-1} }\cdot{}^{u^\dyn}\fc f_{v^{-1} }=\sum_v\de_{v,u}\cdot {}^{(u^{-1})^\dyn}\fc f_{v^{-1} }={}^{(u^{-1})^\dyn}\fc f_{u^{-1} }.
\end{align*}
The other equality is proved similarly. 
\end{proof}

\subsection{Proof of Theorem \ref{thm:inverse_maps}}
\label{subsec:proofs}

The following is a reformulation of  Theorem~\ref{thm:inverse_maps}.
\begin{theorem}\label{thm:invmat}
 The matrix $(a^{z,\la}_{v,w})_{w,v\in W}$ is the inverse to the matrix $(a^{z,\la,\dyn}_{v^{-1}, w^{-1}})_{w,v\in W}$. 
\end{theorem}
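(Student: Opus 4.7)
The theorem is equivalent to the assertion that the composition $\bbT^{z,\la}\circ\bbT^{z,\la,\dyn}:\bbM^\dyn\dashrightarrow\bbM^\dyn$ is the identity (equivalently, the other composition is the identity on $\bbM$). This follows by unfolding the definitions: the bigraded entry of the composition from $\bbM^\dyn_w$ to $\bbM^\dyn_v$ is exactly $\sum_x a^{z,\la}_{v,x^{-1}}\cdot a^{z,\la,\dyn}_{x,w^{-1}}$, which after a change of variables becomes the matrix product in the statement. Thus proving the theorem amounts to showing this matrix product is the identity.

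My plan is to proceed by reduction to the rank-one case together with an induction on length via braid relations. For rank one ($W=\{e,s_\al\}$), both matrices are $2\times 2$ lower triangular with $1$'s on the $(e,e)$-entry, so the identity reduces to the scalar equations
\[
a^{z,\la}_{s,s}\cdot a^{z,\la,\dyn}_{s,s}=1\quad\text{and}\quad a^{z,\la}_{s,e}+a^{z,\la}_{s,s}\cdot a^{z,\la,\dyn}_{s,e}=0.
\]
Both follow directly from the explicit formulas \eqref{eq:Tzla} and \eqref{eq:Tzladyn} using only the oddness $\theta(-x)=-\theta(x)$: the first simplifies to $\theta(-\la_{\al^\vee})\theta(-z_\al)/(\theta(\la_{\al^\vee})\theta(z_\al))=1$, and the second factors as $\frac{\theta(\hbar)\theta(z_\al+\la_{\al^\vee})}{\theta(z_\al)\theta(\hbar+\la_{\al^\vee})}\cdot\bigl(1+\tfrac{\theta(-\la_{\al^\vee})}{\theta(\la_{\al^\vee})}\bigr)=0$.

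For general $W$, I would induct on $\ell(v)+\ell(w)$. The braid relations of \S\ref{subsec:braid} together with the twisted product \eqref{eq:prod} allow one to express the coefficients $a^{z,\la}_{v,w}$ and $a^{z,\la,\dyn}_{v^{-1},w^{-1}}$ in terms of those for shorter Weyl elements, by writing $T^{z,\la}_v=T^{z,\la}_{v'}\cdot T^{z,\la}_{s_i}$ for a reduced factorization $v=v' s_i$, and similarly for $T^{z,\la,\dyn}_v$. The lower-triangularity of both matrices with respect to the Bruhat order, combined with the rank-one identity applied at each simple reflection, then lets one verify the claim entry by entry.

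The main obstacle is that $\bbT^{z,\la}$ and $\bbT^{z,\la,\dyn}$ are sums over all of $W$ rather than single operators, so the matrix identity does not reduce to a single pairwise identity such as $T^{z,\la,\dyn}_{v^{-1}}\cdot T^{z,\la}_v=1$ in the twisted group algebra; in fact a direct computation shows this product carries non-trivial components at bi-degrees other than $(e,e)$. One must instead carefully track how the bi-degree components combine in the matrix product, and show that the off-support entries vanish through Bruhat-order induction. An alternative would be to frame the statement as an adjointness between $\bbT^{z,\la}$ and $\bbT^{z,\la,\dyn}$ using the Poincar\'e pairings of \S\ref{sec:dual} together with Corollary~\ref{cor:Tadj} and the conversion relations of Lemma~\ref{lem:a-iota} and Remark~\ref{rem:acoeffrelation}; however, this route introduces sign changes $\la\to-\la$ that obscure the direct combinatorial picture, so the rank-one reduction appears more transparent.
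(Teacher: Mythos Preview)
Your rank-one verification is correct, and the paper's proof also hinges on exactly this identity (written there as $T^{z,\la,\dyn}_\al=\de_\al\frac{1}{p_\al}\de_\al^\dyn-\de_\al\frac{q_\al}{p_\al}$ when $T^{z,\la}_\al=\de_\al^\dyn p_\al\de_\al+\de_\al^\dyn q_\al$). The gap is in your inductive step: you do not supply a mechanism that makes the induction close. If you factor $v=v's_i$ and write out the recursion for $a^{z,\la}_{v,w}$ from $T^{z,\la}_v=T^{z,\la}_{v'}T^{z,\la}_{s_i}$, you obtain an expression of the form
\[
a^{z,\la}_{v's_i,w}={}^{s_i^\dyn}a^{z,\la}_{v',w}\cdot{}^{w}q_i+{}^{s_i^\dyn}a^{z,\la}_{v',ws_i}\cdot{}^{ws_i}p_i,
\]
whereas the recursion for $a^{z,\la,\dyn}_{v^{-1},w^{-1}}$ coming from $T^{z,\la,\dyn}_{v^{-1}}=T^{z,\la,\dyn}_{s_i}T^{z,\la,\dyn}_{(v')^{-1}}$ involves the Weyl twist on the \emph{opposite} side. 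Feeding these two recursions into the sum $\sum_w a^{z,\la}_{v,w}a^{z,\la,\dyn}_{w^{-1},u^{-1}}$ does not obviously collapse to the inductive hypothesis; the twists by $s_i^\dyn$ and by $s_i$ do not cancel against each other without a further identity. Your paragraph ``one must carefully track how the bi-degree components combine'' is precisely where the proof lives, and it is not supplied.

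The paper resolves this by \emph{not} attacking the matrix product directly. Instead it shows $b^{z,\la}_{v,w}=a^{z,\la,\dyn}_{v^{-1},w^{-1}}$ by deriving matching recursions for both sides. For $b^{z,\la}$, the recursion comes from $T^{z,\la}_\al\bullet\calE^{z,\la}_w=\calE^{z,\la}_{ws_\al}$ together with Corollary~\ref{cor:opp=*} ($\calE^{z,\la}_u=(T^{z,\la}_u)^*$), which in turn rests on the Poincar\'e pairing and Theorem~\ref{thm:Poin}. For $a^{z,\la,\dyn}$, the paper introduces an explicit anti-involution $\overline{\,\cdot\,}:\bbS'_{w,v}\to(\bbS''_{w^{-1},v^{-1}})^{-1}$, $\de_wa\de_v^\dyn\mapsto\de_{v^{-1}}^\dyn a\de_{w^{-1}}$, which converts right multiplication by $T^{z,\la,\dyn}_\al$ into a recursion with the same shape as the one for $b^{z,\la}$. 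This anti-involution is the device that aligns the two ``sides,'' and it (or something equivalent) is what your sketch is missing. Ironically, the route via Poincar\'e duality that you set aside as ``obscuring the direct combinatorial picture'' is essentially what makes the paper's induction work.
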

With the matrix coefficients $b^{z,\la}_{v,w}$ introduced in \S~\ref{subsec:trans}, the statement is equivalent to \[a^{z,\la,\dyn}_{v^{-1}, w^{-1}}=b^{z,\la}_{v,w}\] for any $v,w\in W$. 
\begin{proof}
We prove this by induction along the Bruhat order of $v$. Keep in mind that $(a_{v,w}^{z,\la})_{w,v}$ is lower triangular.

If $v=e$,  then $T^{z,\la,\dyn}_e=1$. Also we know that $a^{z,\la}_{e,e}=1$, which implies that $b^{z,\la} _{e,e}=1$. So the conclusion holds for $v=e$.
In the inductive process, for simplicity we write $T_\al^{z,\la}=\de_\al^\dyn p_\al\de_\al+\de_\al^\dyn q_\al$.
Note that we have in the same notation
\[
T^{z,\la, \dyn}_{\al}=\de_\al\frac{1}{p_\al}\de_\al^\dyn-\de_\al\frac{q_\al}{p_\al}.
\]
Assume that $a^{z,\la, \dyn}_{v^{-1},w^{-1}}=b^{z,\la} _{v,w}$ for any $w$, we want to show that $b^{z,\la} _{vs_\al, w}=a^{z,\la,\dyn}_{s_\al v^{-1}, w^{-1}}$ for any $w$. 

We first derive recursive formula for the $b$-coefficients. We have
\[T^{z,\la}_\al\bullet (T^{z,\la}_{w^{-1} })^*\overset{\text{Cor.}~\ref{cor:opp=*}}=T_\al^{z,\la}\bullet \Ell_{w^{-1} }^{z,\la}\overset{\text{ Def.}~\ref{def:ellclass}}=\Ell_{s_\al w^{-1} }^{z,\la}=(T^{z,\la}_{s_\al w^{-1} })^*.\]
Plug-in the definition $(T_{w^{-1} }^{z,\la})^*=\sum_{v\ge u}b_{v ,w }^{z,\la}\cdot {}^{(w^{-1} )^\dyn}\fc f_{v^{-1}}$,  we obtain 
\[
{}^{s_\al^\dyn}b^{z,\la} _{v ,w s_\al }=b^{z,\la} _{v s_\al, w }\cdot {}^{v}p_\al+b^{z,\la} _{v ,w}\cdot {}^{v }q_\al.
\]
This gives
\begin{equation}\label{eq:brecur}
b^{z,\la} _{vs_\al, w}={}^{s_\al^\dyn}b^{z,\la} _{v,ws_\al}\cdot \frac{1}{{}^v p_\al}-b^{z,\la} _{v,w}\cdot {}^{v}(\frac{q_\al}{ p_\al}). 
\end{equation}


We consider the $a$-coefficients. We have for each $w,v\in W$ a canonical isomorphism 
\[
(\bbS''_{w^{-1}, v^{-1}})^{-1}= w^*\bbL\otimes (v^{-1})^\dynst\bbL^{-1}= w^*\bbS_{w,v}=\bbS_{w,v}',
\] and hence we define 
\[
\overline{\{\cdot \}}:~\bbS'_{w,v}= (\bbS''_{w^{-1}, v^{-1}})^{-1}, \quad \de_w a\de_v^\dyn\mapsto\overline{\de_w a\de_v^\dyn}:= \de_{v^{-1}}^\dyn a\de_{w^{-1}}.
\]
Notice that both $\bbS'$ and $(\bbS'')^{-1}$ are algebra objects, and 
it is easy to see that $\overline{\{\cdot \}}$  is an anti-involution. Indeed,  we have 
\begin{align*}
\overline{\de_{w_1}a_1\de_{v_{1}}^\dyn\cdot \de_{w_2}a_2\de_{v_2}^\dyn}&=\overline{\de_{w_1w_2}{}^{(w_2)^{-1}}a_1\cdot {}^{v_1^\dyn}a_2\de_{v_1v_2}^\dyn}\\
&=\de_{v_2^{-1}v_1^{-1}}^\dyn {}^{(w_2)^{-1}}a_1\cdot {}^{v_1^\dyn}a_2\de_{w_2^{-1}w_1^{-1}}\\
&=\de^\dyn_{v_2^{-1}}a_2\de_{w_2^{-1}}\de_{v_1^{-1}}^\dyn a_1\de_{w_1^{-1}}\\
&=\overline{\de_{w_2}a_2\de_{v_2}^{\dyn}}\cdot \overline{\de_{w_1}a_1\de_{v_1}^\dyn}.
\end{align*}
In particular, we have 
\[
 \overline{T^{z,\la,\dyn}_{w_1}\cdot T^{z,\la,\dyn}_{w_2}}=\overline {T^{z,\la,\dyn}_{w_2}}\cdot \overline{T^{z,\la,\dyn}_{w_1}}. 
\]

It is easy to see that 
\[\overline{T^{z,\la,\dyn}_\al}=\de^\dyn_\al\frac{1}{p_\al}\de_\al-\frac{q_\al}{p_\al}\de_\al.\]
We then have 
\begin{align*}\sum_w\de_{w}^\dyn a^{z,\la,\dyn}_{s_\al v^{-1}, w^{-1}}\de_{vs_\al}=
&\overline{\sum_{w}\de_{s_\al v^{-1}}a_{s_\al v^{-1}, w^{-1}}^{z,\la,\dyn}\de^\dyn_{w^{-1}}}\\
&=\overline{T_{s_\al  v^{-1}}^{z,\la,\dyn}}=\overline{T_{\al}^{z,\la,\dyn}T_{v^{-1}}^{z,\la,\dyn}}=\overline{T^{z,\la,\dyn}_{v^{-1}}}\cdot \overline{T^{z,\la,\dyn}_\al}\\
&=\overline{\sum_w\de_{v^{-1}} a^{z,\la,\dyn}_{v^{-1},w^{-1}}\de_{w^{-1}}^\dyn}\cdot (\de_\al^\dyn \frac{1}{p_\al}\de_\al-\frac{q_\al}{p_\al}\de_\al)\\
&=\sum_w\de_{w}^\dyn a^{z,\la,\dyn}_{v^{-1},w^{-1}}\de_{v}\cdot (\de_\al^\dyn \frac{1}{p_\al}\de_\al-\frac{q_\al}{p_\al}\de_\al)\\
&=\sum_w\de_{ws_\al}^\dyn {}^{s_\al^\dyn}a^{z,\la,\dyn}_{v^{-1},w^{-1}}\cdot\frac{1}{{}^vp_\al}\de_{vs_\al}-\sum_w\de_w^\dyn a^{z,\la,\dyn}_{v^{-1},w^{-1}}\cdot {}^v(\frac{q_\al}{p_\al})\de_{vs_\al}\\
&=\sum_w\de_{w}^\dyn {}^{s_\al^\dyn}a^{z,\la,\dyn}_{v^{-1},s_\al w^{-1}}\cdot\frac{1}{{}^vp_\al}\de_{vs_\al}-\sum_w\de_w^\dyn a^{z,\la,\dyn}_{v^{-1},w^{-1}}\cdot {}^v(\frac{q_\al}{p_\al})\de_{vs_\al}\\
&=\sum_w \de_w^\dyn\left(  {}^{s_\al^\dyn}a^{z,\la,\dyn}_{v^{-1},s_\al w^{-1}}\cdot \frac{1}{{}^vp_\al}-a^{z,\la,\dyn}_{v^{-1},w^{-1}}\cdot {}^v(\frac{q_\al}{p_\al})\right)\de_{vs_\al} .
\end{align*}
Therefore, 
\[a^{z,\la,\dyn}_{s_\al v^{-1}, w^{-1}}={}^{s_\al^\dyn}a^{z,\la,\dyn}_{v^{-1},s_\al w^{-1}}\cdot \frac{1}{{}^vp_\al}-a^{z,\la,\dyn}_{v^{-1},w^{-1}}\cdot {}^v(\frac{q_\al}{p_\al}).\]
Comparing with the recursive formula for $b^{z,\la}_{v,w}$ in \eqref{eq:brecur}, we see that 
 $b^{z,\la}_{v,w}=a^{z,\la,\dyn}_{v^{-1}, w^{-1}}$. 
 \end{proof}

We can now prove Theorem~\ref{thm:inverse_maps}.
\begin{proof}[Proof of Theorem \ref{thm:inverse_maps}] We only prove $\bbT^{z,\la}\circ \bbT^{z,\la,\dyn}=\id$.  
We have 
\begin{align*}
\bbT^{z,\la}\circ \bbT^{z,\la,\dyn}&=\sum_uT_u^{z,\la}\circ \sum_vT^{z,\la,\dyn}_{v^{-1}}\\
&=\sum_u\sum_{w_1}\fp_u^\dyn a^{z,\la}_{u,w_1}\fri_{w_1^{-1}}\circ \sum_{v}\sum_{w_2}\fp_{v^{-1}}a^{z,\la, \dyn}_{v^{-1},w_2^{-1}}\fri_{w_2}^\dyn\\
&=\sum_{u,w_2}\fp_u^\dyn\left(\sum_v a^{z,\la}_{u,v} a^{z,\la, \dyn}_{v^{-1},w_2^{-1}}\right)\fri_{w_2}^\dyn\\
&\overset{\text{ Thm. }  \ref{thm:invmat}}=\sum_{u,w_2}\fp_u^\dyn\left(\sum_v a^{z,\la}_{u,v} b^{z,\la}_{v, w_2}\right)\fri_{w_2}^\dyn\\
&=\sum_{u,w_2}\de_{u,w_2}\fp^\dyn_u\fri^\dyn_{w_2}=\sum_u\fp^\dyn_u\fri^\dyn_u=\id_{\bbM(\la)^\dyn}. 
\end{align*}
\end{proof}

\section{A formula of the transition matrix/restriction formula} \label{sec:restriction}
In this section, we provide a formula of the transition  matrix between the Weyl group elements and the dynamical DL operators. Using \eqref{eq:bfE-z-la} and  \eqref{eq:res1},  this  gives restriction formulas of the elliptic  classes. 

\subsection{The formula for the transition matrices} 
Let $T_\al$ be one of $T^{\pm z,\pm \la}_\al$ or their counterparts for the Langlands dual system. 
We write
\[T_\al=\de_\al^\dyn\fa(z_\al, \la_{\al^\vee})+\de_\al^\dyn \fb(z_\al, \la_{\al^\vee})\de_\al, \]
and write 
\[T_v=\sum_{w\le v }\de_v^\dyn a_{v,w}\de_w. 
\]
Specializing $T_\al$ to $T^{\pm z,\pm \la}_\al$, the $a_{v,w}$ above becomes $a^{\pm z,\pm \la}_{v,w}$. 
Similar for the Langlands dual system.

For any reduced sequence $v=s_{i_1}\cdots s_{i_l}$, denote $I=(i_1,...,i_l)$. Define $\beta_l^\vee=\al^\vee_{i_l}$ and $\beta_j^\vee=s_{i_l}\cdots s_{i_{j+1}}(\al^\vee _{i_j}), j=l-1,...,1$.  For any $J\subset[l]$, denote $J_0=\emptyset$ and $J_j=J\cap [j]$ for $j\le l-1$, $w_{I,J_j}=\prod_{k\in J_j}s_{i_k}\in W$, and $\gamma_{j}=w_{I,J_{j-1}}(\al_{i_j})$. Clearly the set $\{\gamma_1,...,\gamma_l\}$ depends on $I$ and $J$, but the sequence $(\be_1^\vee ,...,\be^\vee _l)$ depends only on $I$. Moreover, $\{\be_1^\vee ,\cdots ,\be_l^\vee \}=\Phi^\vee (v^{-1})$. 
\begin{theorem}\label{thm:res}
We have 
\[
a_{v,w}=\sum_{J\subset [l], w_{I,J}=w}\zeta_J, \text{ where }\zeta_J:=\prod_{j\in J}\fb(z_{\gamma_j}, \la_{\be_j^\vee})\prod_{j\not \in J}\fa(z_{\gamma_j}, \la_{\be_j^\vee}). 
\]
\end{theorem}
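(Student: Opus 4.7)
The plan is induction on the length $l$ of the reduced word $I=(i_1,\ldots,i_l)$. The base case $l=0$ is immediate since $T_e=1$ is the empty product and $\zeta_\emptyset=1$. For the inductive step, factor at the last letter: $T_v=T_{v''}\cdot T_{s_{i_l}}$, where $v''=s_{i_1}\cdots s_{i_{l-1}}$ has reduced word $I''=(i_1,\ldots,i_{l-1})$. Each subset $J\subset[l]$ is uniquely of the form $J''$ or $J''\cup\{l\}$ for some $J''=J\cap[l-1]$, matching the two-term decomposition $T_{s_{i_l}}=\de^\dyn_{\al_{i_l}}\fa(z_{\al_{i_l}},\la_{\al_{i_l}^\vee})+\de^\dyn_{\al_{i_l}}\fb(z_{\al_{i_l}},\la_{\al_{i_l}^\vee})\de_{\al_{i_l}}$.

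The product is computed in the $\bbS''$-form, for which the formula of \S~\ref{subsec:Sproductrational} can be rewritten compactly as
\[
\de^\dyn_{v_1}A\de_{w_1}\cdot\de^\dyn_{v_2}B\de_{w_2}=\de^\dyn_{v_1v_2}\cdot{}^{(v_2^{-1})^\dyn}A\cdot{}^{w_1}B\cdot\de_{w_1w_2}.
\]
Applying this with $v_1=v''$, $v_2=s_{i_l}$, $w_1=w_{I'',J''}$ to the inductive hypothesis $T_{v''}=\sum_{J''}\de^\dyn_{v''}\zeta^{(I'')}_{J''}\de_{w_{I'',J''}}$ and to the two summands of $T_{s_{i_l}}$ yields, respectively for $l\notin J$ and $l\in J$,
\[
\de^\dyn_v\cdot{}^{s_{i_l}^\dyn}\zeta^{(I'')}_{J''}\cdot{}^{w_{I'',J''}}\fa(z_{\al_{i_l}},\la_{\al_{i_l}^\vee})\cdot\de_{w_{I'',J''}},
\]
\[
\de^\dyn_v\cdot{}^{s_{i_l}^\dyn}\zeta^{(I'')}_{J''}\cdot{}^{w_{I'',J''}}\fb(z_{\al_{i_l}},\la_{\al_{i_l}^\vee})\cdot\de_{w_{I'',J''}s_{i_l}}.
\]
The resulting $W$-degrees $w_{I'',J''}$ and $w_{I'',J''}s_{i_l}$ agree with $w_{I,J}$ for $J=J''$ and $J=J''\cup\{l\}$.

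The remaining task is to identify the coefficients with $\zeta_J$. The required combinatorial facts are immediate from the definitions: for $j\le l-1$ one has $\gamma_j(I,J)=\gamma_j(I'',J'')$ because $J_{j-1}=J''_{j-1}$, and $\be_j^\vee(I)=s_{i_l}(\be_j^\vee(I''))$ directly from $\be_j^\vee(I)=s_{i_l}s_{i_{l-1}}\cdots s_{i_{j+1}}(\al_{i_j}^\vee)$; while for $j=l$, $\be_l^\vee(I)=\al_{i_l}^\vee$ and $\gamma_l(I,J)=w_{I'',J''}(\al_{i_l})$. Using the elementary identities ${}^{s_{i_l}^\dyn}\la_{\mu^\vee}=\la_{s_{i_l}(\mu^\vee)}$ and ${}^w z_{\mu}=z_{w(\mu)}$, the dynamical action ${}^{s_{i_l}^\dyn}$ converts every $\la_{\be_j^\vee(I'')}$ in $\zeta^{(I'')}_{J''}$ into $\la_{\be_j^\vee(I)}$ without touching the $z_{\gamma_j}$ factors, and the spectral action ${}^{w_{I'',J''}}$ on the final factor produces $\fa(z_{\gamma_l},\la_{\be_l^\vee(I)})$ or $\fb(z_{\gamma_l},\la_{\be_l^\vee(I)})$ according to whether $l\notin J$ or $l\in J$. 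Multiplying these pieces together gives exactly $\zeta_J$, completing the induction.

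The main obstacle is purely notational: one must carefully track how the twisted $\bbS''$-product introduces precisely the spectral and dynamical Weyl actions needed to transform the $(I'',J'')$-combinatorics into the $(I,J)$-combinatorics. The definitions of $\gamma_j$ (depending on both $I$ and $J$) and $\be_j^\vee$ (depending only on $I$) are tailor-made so that these two actions line up, reducing the induction step to a direct calculation rather than any deeper identity.
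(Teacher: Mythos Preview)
Your proof is correct and is essentially the same computation as the paper's, just organized inductively rather than in one shot. The paper's proof simply expands $T_v=T_{\al_{i_1}}\cdots T_{\al_{i_l}}$ into $2^l$ terms indexed by $J\subset[l]$, then moves every $\de^\dyn$ to the left and every $\de$ to the right using the twisted product rule; the resulting middle coefficient is $\zeta_J$ by exactly the identities ${}^{w}z_\mu=z_{w\mu}$ and ${}^{s^\dyn}\la_{\mu^\vee}=\la_{s\mu^\vee}$ you used. Your induction peels off the last letter and performs the same move one step at a time, with the extra bookkeeping that $\gamma_j(I,J)=\gamma_j(I'',J'')$ and $\be_j^\vee(I)=s_{i_l}\be_j^\vee(I'')$ making explicit why the factors line up. Neither approach requires anything beyond the commutation rules in \S\ref{subsec:Sproductrational}.
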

\begin{proof} By definition,  $a_{v,w}$ is the sum of terms, one for each $J\subset [l]$ with $w_{I,J}=w$. For each $J$, we have 
\[
\prod_{j\in J}\de_{\al_{i_j}}^\dyn \fb^\la(z_{\la_{i_j}}, \la_{\al^\vee_{i_j}})\de_{\al_{i_j}}\cdot \de_{\al_{i_j}}^\dyn \prod_{j\not \in J}\fa^\la(z_{\la_{i_j}}, \la_{\al^\vee_{i_j}}),
\]
where the product should follow the order $j=1,...,l$. We then move the $\de_\al^\dyn$ to the left and $\de_\al$ to the right, and obtain the conclusion. 
\end{proof}

\begin{remark} By combining the formula for the corresponding $a$-coefficients with \eqref{eq:res1}, \eqref{eq:res2} and their analogues, one obtains a  formula for the restriction coefficients of the elliptic classes. In a forthcoming paper \cite{LZZ}, we will derive a root polynomial formula for the $b$-coefficients, generalizing the Billey and Graham-Willems formulas~\cite{billey, ajs, graham, willems}. This will be a more advantageous formula for the restriction coefficients of the elliptic classes, for reasons which will be specified. 
\end{remark}
\subsection{The coefficients $a^{z,\la}_{v,w}$}\label{subsec:T_res} In the remaining part of this paper, we  focus on  $T_\al=T_{\al}^{z,\la}$ version. The other versions are similar. Recall that 
\[T_{\al}^{z,\la}=\de_\al^\dyn\frac{\theta(\hbar)\theta(z_\al+\la_{\al^\vee})}{\theta(z_\al)\theta(\hbar+\la_{\al^\vee})}+\de_\al^{\dyn}\frac{\theta(\hbar-z_\al)\theta(-\la_{\al^\vee})}{\theta(z_\al)\theta(\hbar+\la_{\al^\vee})}\de_\al.\]
From Theorem \ref{thm:res}, we have
\begin{equation}\label{eq:T_zeta}
\zeta_J=\frac{\prod_{j\in J}\theta(\hbar-z_{\gamma_j})\theta(-\la_{{\be_j^\vee}})\cdot \prod_{j\not\in J}\theta(\hbar)\theta(z_{\gamma_j}+\la_{\beta_{j}^\vee})}{
\prod_{j=1}^l\theta(z_{\gamma_j})\theta(\hbar+\la_{\be_j^\vee})}.
\end{equation}

\begin{example}We compute all coefficients for the type $A_2$ using Theorem \ref{thm:res}. In this case there are two simple roots $\al_1,\al_2$. For simplicity,  denote $\de_i=\de_{s_i}$, $\de_i^\dyn=\de_{s_i}^\dyn, z_{i+j}=z_{\al_i+\al_j}, \la_{i+j}=\la_{\al_i^\vee+\al_j^\vee}$, $T_{ij}^{z,\la}=T_{s_is_j}^{z,\la}$. Then we have:
\begin{align*}
T^{z,\la}_{1}&=\de_1^\dyn\frac{\theta(\hbar)\theta(z_1+\la_{1})}{\theta(z_1)\theta(\hbar+\la_{1})}+\de_1^{\dyn}\frac{\theta(\hbar-z_1)\theta(-\la_{1})}{\theta(z_1)\theta(\hbar+\la_{1})}\de_1.\\
T^{z,\la}_{2}&=\de_2^\dyn\frac{\theta(\hbar)\theta(z_2+\la_{2})}{\theta(z_2)\theta(\hbar+\la_{2})}+\de_2^{\dyn}\frac{\theta(\hbar-z_2)\theta(-\la_{2})}{\theta(z_2)\theta(\hbar+\la_{2})}\de_2.\\
T^{z,\la}_{12}&=\de_{12}^\dyn\frac{1}{\theta(\hbar+\la_{1+2})\theta(\hbar+\la_2)}\bigg(\frac{\theta(\hbar)^2\theta(z_1+\la_{1+2})\theta(z_2+\la_2)}{\theta(z_1)\theta(z_2)}+\frac{\theta(\hbar-z_1)\theta(-\la_{1+2})\theta(\hbar)\theta(z_{1+2}+\la_2)}{\theta(z_1)\theta(z_{1+2})}\de_1\\
&+\frac{\theta(\hbar)\theta(z_1+\la_{1+2})\theta(\hbar-z_2)\theta(-\la_2)}{\theta(z_1)\theta(z_2)}\de_2+\frac{\theta(\hbar-z_1)\theta(-\la_{1+2})\theta(\hbar-z_{1+2})\theta(-\la_2)}{\theta(z_1)\theta(z_{1+2})}\de_{12}\bigg).\\
T^{z,\la}_{21}&=\de_{21}^\dyn\frac{1}{\theta(\hbar+\la_{1+2})\theta(\hbar+\la_1)}\bigg(\frac{\theta(\hbar)^2\theta(z_2+\la_{1+2})\theta(z_1+\la_1)}{\theta(z_2)\theta(z_1)}+\frac{\theta(\hbar-z_2)\theta(-\la_{1+2})\theta(\hbar)\theta(z_{1+2}+\la_1)}{\theta(z_2)\theta(z_{1+2})}\de_2\\
&+\frac{\theta(\hbar)\theta(z_2+\la_{1+2})\theta(\hbar-z_1)\theta(-\la_1)}{\theta(z_2)\theta(z_1)}\de_1+\frac{\theta(\hbar-z_2)\theta(-\la_{1+2})\theta(\hbar-z_{1+2})\theta(-\la_1)}{\theta(z_2)\theta(z_{1+2})}\de_{21}\bigg).\\
T^{z,\la}_{121}&=\de_{121}^\dyn\frac{1}{\theta(\hbar+\la_1)\theta(\hbar+\la_2)\theta(\hbar+\la_{1+2})}\bigg(\frac{\theta(\hbar)^3\theta(z_1+\la_2)\theta(z_2+\la_{1+2})\theta(z_1+\la_1)}{\theta(z_1)^2\theta(z_2)}\\
&+\frac{\theta(\hbar)^2\theta(z_1+\la_2)\theta(z_2+\la_{1+2})\theta(\hbar-z_1)\theta(-\la_1)}{\theta(z_1)^2\theta(z_2)}\de_1\\
&+\frac{\theta(\hbar)^2\theta(\hbar-z_1)\theta(-\la_2)\theta(z_{1+2}+\la_{1+2})\theta(z_{-1}+\la_1)}{-\theta(z_1)^2\theta(z_{1+2})}\de_1\\
&+\frac{\theta(\hbar)\theta(\hbar-z_1)\theta(-\la_2)\theta(z_{1+2}+\la_{1+2})\theta(\hbar+z_1)\theta(-\la_1)}{-\theta(z_1)^2\theta(z_{1+2})}\\
&+\frac{\theta(\hbar)^2\theta(z_1+\la_2)\theta(\hbar-z_2)\theta(-\la_{1+2})\theta(z_{1+2}+\la_1)}{\theta(z_1)\theta(z_2)\theta(z_{1+2})}\de_2\\
&+\frac{\theta(\hbar)\theta(z_1+\la_2)\theta(\hbar-z_2)\theta(-\la_{1+2})\theta(-\la_1)\theta(\hbar-z_{1+2})}{\theta(z_1)\theta(z_2)\theta(z_{1+2})}\de_{21}\\
&+\frac{\theta(\hbar)\theta(\hbar-z_1)\theta(-\la_2)\theta(-\la_{1+2})\theta(\hbar-z_{1+2})\theta(z_2+\la_1)}{\theta(z_1)\theta(z_2)\theta(z_{1+2})}\de_{12}\\
&+\frac{\theta(\hbar-z_1)\theta(\hbar-z_2)\theta(\hbar-z_{1+2})\theta(-\la_1)\theta(-\la_2)\theta(-\la_{1+2})}{\theta(z_1)\theta(z_2)\theta(z_{1+2})}\de_{121}\bigg).
\end{align*}
In the expansion of $T_{121}^{z,\la}$, one can use Fay's Trisecant Identity to verify that the coefficients for $\de_1$ coincides with that for $\de_2$ if one switches $1$ and $2$, and also show that the coefficient for $\de_e=1$ is symmetric with respect to $1$ and $2$. This shows that $T_{121}^{z,\la}=T_{212}^{z,\la}$. That is, the braid relation for $A_2$ is satisfied. 
\end{example}

\begin{example}\label{ex:1}Let us consider  the operator $T^{z,\la}_\al$ in the  $A_3$ case with simple roots $\al_1,\al_2,\al_3$. For simplicity, denote $\al_{i\pm j}=\al_i\pm \al_j$, $\la_{i\pm j}=\la_{\al_i^\vee\pm \al_j^\vee}$, $z_{i\pm j}=z_{\al_i}\pm z_{\al_j}$. Note that $\al_{i+j}^\vee=\al_i^\vee+\al_j^\vee$.  Let $v=s_1s_2s_1s_3s_2$ and $w=s_1s_2$. So 
\[\be_1^\vee =\al_3^\vee , \be_2^\vee =\al^\vee _{1+2+3},\be^\vee _3=\al_{1+2}^\vee ,\be_4^\vee =\al_{2+3}^\vee ,  \be_5^\vee =\al_2^\vee . \]
There are three subsets $J$ of $[5]$ with $w_{I,J}=w$, namely, 
\[
J=\{1,1,-,-,-\}, ~J'=\{1,-,-,-,1\},~ J''=\{-,-,-,1,-,1\}.
\]
We compute $\zeta_{J'}$ as an example. In this case 
\[
\gamma_1=\al_1,\gamma_2=\al_{1+2}, \gamma_3=-\al_1,\gamma_4=\al_3,\gamma_5=\al_{1+2}.
\]
So 
\[
\zeta_{J'}=\frac{\theta(\hbar)^3\theta(z_{1+2}+\la_{1+2+3})\theta(z_{-1}+\la_{1+2})\theta(z_{3}+\la_{2+3})\theta(-\la_{3})\theta(-\la_{2})\theta(\hbar-z_{1})\theta(\hbar-z_{1+2})}{\theta(z_1)\theta(z_{1+2})\theta(z_{-1})\theta(z_{3})\theta(z_{1+2})\theta(\hbar+\la_{3})\theta(\hbar+\la_{1+2+3})\theta(\hbar+\la_{1+2})\theta(\hbar+\la_{2+3})\theta(\hbar+\la_{2})}.
\]
\end{example}

\subsection{Restrictions of line bundles} \label{subsec:restr_linebdl}
Recall from Theorem \ref{thm:Trational}, $T^{z,\la}_{v}$ is a rational section of $\bbS''$. So $a^{z,\la}_{v,w}$ is a rational section of $\bbS''_{w,v}=v^\dynst \bbS_{w,v}$. Using Lemma \ref{lem:basic-bundle} and  Lemma \ref{lem:basicS}, we have
\begin{align}\label{eq:bbS''res-la}
\bbS''_{w,v}|_\la&=(v^\dynst \bbS_{w,v})|_\la=\bbS_{w,v}|_{v(\la)}=\calO(v(\la)-w_\bullet \la)),\\
\label{eq:bbS''res-z}\bbS''_{w,v}|_z&=(v^\dynst \bbS_{w,v})|_z=v^\dynst \calO(z-v_\bullet w^{-1}(z))=\calO(v^{-1}_\bullet z-w^{-1}(z)). 
\end{align}
Let $v=s_{i_1}\cdots s_{i_k}$ and denote $\mu_j=s_{i_1}\cdots s_{i_{j-1}}\al_{i_j}$ and $\mu_j^\vee=s_{i_1}\cdots s_{i_{j-1}}\al^\vee_{i_j}$,  then it is easy to see that 
\begin{align*}
v(\la)-\la&=-\sum_{j}\la_{\al_{i_j}^\vee}\mu_j,& \quad v(z)-z&=-\sum_jz_{\al_{i_j}}\mu_j^\vee.\\
v(\hbar\rho)-\hbar\rho&=-\hbar\sum_j\mu_j,&v(\hbar\rho^\vee)-\hbar\rho^\vee&=-\hbar\sum_j\mu_j^\vee.
\end{align*}
These identities can be used in calculations.

\begin{example}
We continue with Example \ref{ex:1}, and illustrate Theorem \ref{thm:Trational}. 
We first compute $\bbS''_{w,v}|_\la$, and get 
\begin{align*}
\bbS''_{s_1s_2, s_1s_2s_1s_3s_2}|_z&=\calO((2\hbar-z_2)\al_1^\vee+(4\hbar-z_2-z_3)\al_2^\vee+(3\hbar-z_1-z_2-z_3)\al_3^\vee),\\
\bbS''_{s_1s_2,s_1s_2s_1s_3s_2}|_\la&=\calO(2\hbar-\la_3)\al_1+(\hbar-\la_1-\la_2-\la_3)\al_2+(-\la_2-\la_3)\al_3).
\end{align*}

Consider  $J=\{1,1,-,-,-\}$. We have
\[
\gamma_1=\al_1,\gamma_2=\al_{1+2},\gamma_3=\al_{2}, \gamma_4=\al_{1+2+3}, \gamma_5=\al_{-1-2}.  
\]
so 
\[
\zeta_{J}=\frac{\theta(\hbar)^3\theta(\hbar-z_1)\theta(\hbar-z_{1+2})\theta(-\la_{3})\theta(-\la_{1+2+3})\theta(z_2+\la_{1+2})\theta(z_{1+2+3}+\la_{2+3})\theta(z_{-1-2}+\la_{2})}{\theta(z_{1})\theta(z_{1+2})\theta(z_{2})\theta(z_{1+2+3})\theta(z_{-1-2})\theta(\hbar+\la_{2})\theta(\hbar+\la_{2+3})\theta(\hbar+\la_{1+2})\theta(\hbar+\la_{1+2+3})\theta(\hbar+\la_{3})}.\]
One can verify that this is a rational section of $\bbS''_{s_1s_2,s_1s_2s_1s_3s_2}$ by using the argument in the proof of Corollary \ref{cor:zero} below (or see Example \ref{ex:C2} below).

\end{example}

\section{Further properties and examples}\label{sec:examples}
The method in \S~\ref{subsec:restr_linebdl} has further applications. We work with notations from Theorem \ref{thm:res}, for the operators $T^{z,\la}_w$. We would like to thank Allen Knutson for helpful suggestions in rephrasing some of the results in this section.

\begin{corollary}\label{cor:zero}For any $\la\in \catA^\vee, z\in \catA$, and for any $J\subset [l]$ so that $w_{I,J}=w$,   we have the following two identities in $\catA^\vee$ and $\catA$, respectively:
\begin{align}\label{eq:zero}
v(\la)-w_\bullet (\la)&=\hbar\sum_{j\in J}\gamma_j-\sum_{j\not \in J} \la_{\beta_j^\vee}\gamma_j, &
v_\bullet^{-1}z-w^{-1}(z)&=\hbar\sum_{j=1}^l\beta_j^\vee-\sum_{j\not \in J}z_{\gamma_j}\beta_j^\vee. 
\end{align}
\end{corollary}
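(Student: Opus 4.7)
The plan is to recognize each individual $\zeta_J$ appearing in Theorem~\ref{thm:res} as a rational section of $\bbS''_{w,v}$ with $w=w_{I,J}$, and then read off both identities by comparing the explicit divisor of $\zeta_J$ with the line bundle descriptions in \eqref{eq:bbS''res-la} and \eqref{eq:bbS''res-z}. The underlying principle is that a degree-zero line bundle on $\catA$ (resp.\ $\catA^\vee$) is determined up to isomorphism by any nonzero rational section, together with the identification $\Pic^0(\catA)\cong\catA^\vee$.

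First I would verify that each $\zeta_J$ (not merely the sum $a_{v,w}=\sum_{J\colon w_{I,J}=w}\zeta_J$) is a rational section of $\bbS''_{w_{I,J},v}$. This is essentially built into the proof of Theorem~\ref{thm:res}: expanding the product $T^{z,\la}_v=T^{z,\la}_{\al_{i_1}}\sttwo\cdots\sttwo T^{z,\la}_{\al_{i_l}}$, each of the two summands of $T^{z,\la}_{\al_{i_j}}$ lies in a single bigraded piece of $\bbS''$ (namely $\bbS''_{e,\al_{i_j}}$ or $\bbS''_{\al_{i_j},\al_{i_j}}$), so the term indexed by $J\subset[l]$ lands in $\bbS''_{w_{I,J},v}$ by Lemma~\ref{lem:tensor}.

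Next I would compute the $z$-divisor of $\zeta_J|_\la$ on $\catA$ directly from \eqref{eq:T_zeta}. With $\la$ held fixed, the only $z$-dependent theta factors pair up as
\[
\frac{\theta(\hbar-z_{\gamma_j})}{\theta(z_{\gamma_j})}\ (j\in J), \qquad \frac{\theta(z_{\gamma_j}+\la_{\be_j^\vee})}{\theta(z_{\gamma_j})}\ (j\notin J),
\]
while all remaining theta factors depend only on $\la$ and $\hbar$ and are thus constant on the $\la$-fiber. Each pair is the pullback by the character $\chi_{\gamma_j}\colon\catA\to E$ of a meromorphic function on $E$ with a single zero and a single pole, and the proposition on degree-zero line bundles recalled just before \S\ref{subsec:rootsys} identifies these pullbacks as rational sections of $\calO(\hbar\gamma_j)$ and $\calO(-\la_{\be_j^\vee}\gamma_j)$, respectively (using $\theta(-u)=-\theta(u)$). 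Tensoring over $j$, the restriction $\zeta_J|_\la$ is a nonzero rational section of $\calO\bigl(\hbar\sum_{j\in J}\gamma_j-\sum_{j\notin J}\la_{\be_j^\vee}\gamma_j\bigr)$ on $\catA$.

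On the other hand, \eqref{eq:bbS''res-la} says that $\zeta_J|_\la$ is a rational section of $\calO(v(\la)-w_\bullet\la)$. Since a nonzero rational section determines its underlying degree-zero line bundle, and since the identification $\Pic^0(\catA)\cong\catA^\vee$ is injective, the two elements of $\catA^\vee$ must coincide, which is the first identity. The second identity follows by the symmetric argument after restricting $\zeta_J$ to a fiber $\{z\}\times\catA^\vee$: invoke \eqref{eq:bbS''res-z}, and identify the $\la$-divisor via the analogous pairs $\theta(-\la_{\be_j^\vee})/\theta(\hbar+\la_{\be_j^\vee})$ (for $j\in J$) and $\theta(z_{\gamma_j}+\la_{\be_j^\vee})/\theta(\hbar+\la_{\be_j^\vee})$ (for $j\notin J$), which are pullbacks along $\chi_{\be_j^\vee}\colon\catA^\vee\to E$ of sections of $\calO(\hbar)$ and $\calO(\hbar-z_{\gamma_j})$ respectively; summing yields the asserted formula. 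The only delicate point is the bigraded refinement of Theorem~\ref{thm:res} mentioned in step one; once this is in hand, the argument reduces to a routine count of zeros and poles of theta factors.
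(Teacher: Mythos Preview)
Your proof is correct and follows essentially the same approach as the paper: identify each $\zeta_J$ as a rational section of $\bbS''_{w_{I,J},v}$, restrict to a fiber (first $\{-\}\times\{\la\}$, then $\{z\}\times\{-\}$), read off the line bundle from the zeros and poles of the theta factors exactly as you do, and compare with \eqref{eq:bbS''res-la} and \eqref{eq:bbS''res-z}. Your write-up is slightly more explicit in two places---you justify why each individual summand $\zeta_J$ (not just the sum $a_{v,w}$) lives in the correct bigraded piece, and you invoke the injectivity of $\Pic^0(\catA)\cong\catA^\vee$ to conclude---but these are exactly the implicit steps in the paper's argument.
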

\begin{proof}
We use the expression \eqref{eq:T_zeta}, which is a rational section of $\bbS''_{w,v}$. First, fix $\lambda\in \catA^\vee$ so that  $\zeta_J$ only depends on $z\in \catA$. If $j\in J$, then $\frac{\theta(\hbar-z_{\gamma_j})}{\theta(z_{\gamma_j})}$ is a rational section of $\calO(\hbar \gamma_j)$. If $j\not \in J$, then $\frac{\theta(z_{\gamma_j}+\la_{\beta_j^\vee})}{\theta(z_{\gamma_j})}$ is a rational section of $\calO(-\la_{\beta_j^\vee}\gamma_j)$. Therefore,  $\zeta_J$ is a rational section of 
\[\calO(\hbar\sum_{j\in J}\gamma_j-\sum_{j\not \in J} \la_{\beta_j^\vee}\gamma_j). \]
Comparing with $\bbS''_{w,v}|_\la$  in \eqref{eq:bbS''res-z}, we obtain
\[\hbar\sum_{j\in J}\gamma_j-\sum_{j\not \in J} \la_{\beta_j^\vee}\gamma_j=v(\la)-w_\bullet (\la).\]

Similarly, fix $z\in\catA$. If $j\in J$, then $\frac{\theta(-\la_{\beta_{j}^\vee})}{\theta(\hbar+\la_{\beta_j^\vee})}$ is a rational section of $\calO(\hbar\be_j^\vee)$, and if $j\not\in J$, then $\frac{\theta(z_{\gamma_j}+\la_{\be_j^\vee})}{\theta(\hbar+\la_{\be_j^\vee})}$ is a rational section of $\calO((\hbar-z_{\gamma_j})\be_j^\vee)$. Therefore, $\zeta_J$ is a rational section of 
\[
\calO(\hbar\sum_{j\in J}\be_j^\vee+\sum_{j\not\in J}(\hbar-z_{\gamma_j})\beta_j^\vee)=\calO(\hbar \sum_{j=1}^l\beta_j^\vee-\sum_{j\not\in J}z_{\gamma_j}\be_j^\vee).
\]
Comparing with $\bbS''_{w,v}|_z$, we obtain 
\[
\hbar\sum_{j=1}^l\beta_j^\vee-\sum_{j\not\in J}z_{\gamma_j}\beta_j^\vee=v_\bullet^{-1}z-w^{-1}(z). 
\]
\end{proof}

\begin{example}\label{ex:C2}
We  illustrate the proof of  Corollary \ref{cor:zero} in the $C_2$ case.  The Langlands dual system is of type $B_2$. There are  two simple roots $\al_1,\al_2$ such that \[s_1(\al_2)=\al_2+2\al_1,~s_2(\al_1)=\al_1+\al_2,~ s_1(\al_2^\vee)=\al_2^\vee+\al_1^\vee,~ s_2(\al_1^\vee)=\al_1^\vee+2\al_2^\vee.\]
Let $v=s_{1}s_2s_1s_2, w=s_1$, then by computations  using \eqref{eq:bbS''res-la} and \eqref{eq:bbS''res-z}, we obtain
\begin{align*}
\bbS''_{w,v}|_\la&=\calO((\hbar-\la_1-2\la_2)\al_1+(-\la_1-2\la_2)\al_2),\\
\bbS''_{w,v}|_z&=\calO((3\hbar-z_1-z_2)\al_1^\vee+(4\hbar-2z_1-2z_2)\al_2^\vee).
\end{align*}

On the other hand, with $v=s_{1}s_2s_1s_2$, we have
\[
\be_1^\vee=\al_1^\vee ,\be_2^\vee=\al_1^\vee+\al_2^\vee ,\be_3^\vee =\al_{1}^\vee+2\al_2^\vee,\be_4^\vee =\al_2^\vee. 
\]
Consider $J=\{1,-,-,-\}$, so 
\[
\gamma_1=\al_1,\gamma_2=\al_2+2\al_1, \gamma_3=-\al_1,\gamma_4=\al_2+2\al_1. 
\]
Then \eqref{eq:T_zeta} gives
\begin{align*}
\zeta_J&=\frac{
\theta(\hbar-z_{\al_1})\theta(-\la_{\al_1^\vee})\theta(\hbar)^3\theta(z_{\al_2+2\al_1}+\la_{\al_1^\vee+\al_2^\vee})\theta(z_{-\al_1}+\la_{\al_1^\vee+2\al_2^\vee})\theta(z_{2\al_1+\al_2}+\la_{\al_2^\vee})}{
\theta(z_{\al_1})\theta(z_{2\al_{1}+\al_2})\theta(-z_{\al_1})\theta(z_{2\al_1+\al_2})\theta(\hbar+\la_{\al_1^\vee})\theta(\hbar+\la_{\al_1^\vee+\al_2^\vee})\theta(\hbar+\la_{\al_1^\vee+2\al_2^\vee})\theta(\hbar+\la_{\al_2^\vee})}\\
&=\frac{\theta(\hbar-z_{\al_1})}{\theta(z_{\al_1})}\cdot \frac{\theta(z_{2\al_1+\al_2}+\la_{\al_1^\vee+\al_2^\vee})}{\theta(z_{2\al_1+\al_2})}\cdot \frac{\theta(z_{-\al_1}+\la_{\al_1^\vee+2\al_2^\vee})}{\theta(-z_{\al_1})}\cdot \frac{\theta(z_{2\al_1+\al_2}+\la_{\al_2^\vee})}{\theta(z_{2\al_1+\al_2})}\\
&\cdot \frac{\theta(-\la_{\al_1^\vee})\theta(\hbar)^3}{\theta(\hbar+\la_{\al_1^\vee})\theta(\hbar+\la_{\al_1^\vee+\al_2^\vee})\theta(\hbar+\la_{\al_1^\vee+2\al_2^\vee})\theta(\hbar+\la_{\al_2^\vee})}.
\end{align*}
Fix $\la\in \catA^\vee$, and let $z\in \catA$ vary. The first fraction is a zero section of $\calO(\hbar)$ with $z_{\al_1}\in E$, so it is a rational section of $\calO(\hbar\al_1)=\chi_{\al_1}^*\calO(\hbar)$. Similar, the second, third, fourth fractions are rational sections of \[\calO(-\la_{\al_1^\vee+\al_2^\vee}(2\al_1+\al_2)),~ \calO(-\la_{\al_1^\vee+2\al_2^\vee}(-\al_1)),~ \calO(-\la_{\al_2^\vee}(2\al_1+\al_2)).\]
The last fraction is a constant since there is no $z$ variable involved. 
So $\zeta_J$ (viewed as on $\catA\times\{\la\}$)  is a rational section of 
\begin{align*}
&\calO(\hbar\al_1)\otimes \calO(-\la_{\al_1^\vee+\al_2^\vee}(2\al_1+\al_2))\otimes\calO(-\la_{\al_1^\vee+2\al_2^\vee}(-\al_1))\otimes  \calO(-\la_{\al_2^\vee}(2\al_1+\al_2))\\
&=\calO(\hbar \al_1-\la_{\al_1^\vee+\al_2^\vee}(2\al_1+\al_2)+\la_{\al_1^\vee+2\al_2^\vee}\al_1-\la_{\al_2}(2\al_1+\al_2))\\
&=\calO((\hbar-\la_{\al_1^\vee}-2\la_{\al_2^\vee})\al_1-(\la_{\al_1^\vee}+2\la_{\al_2^\vee})\al_2)=\bbS''_{w,v}|_\la.
\end{align*}

One can similarly fix $z$, and let $\la\in \catA^\vee$  vary. Rewriting $\zeta_J$, one can show that it is a rational section of $\bbS''_{w,v}|_z$. This verifies that $\zeta_J$ is a rational section of $\bbS''_{w,v}$. 
\end{example}

The following is an  application in root systems.

\begin{corollary}\label{cor:la} For any $\mu\in X^*, \mu^\vee\in X_*$ and $v\ge w$, with notations as in Theorem \ref{thm:res},  we have 
\begin{align}
\label{eq:la}v(\mu)-w(\mu)&=-\sum_{j\not\in J}\lag \mu, \be_j^\vee\rag \gamma_j, &
v^{-1}(\mu^\vee)-w^{-1}(\mu^\vee)&=-\sum_{j\not\in J}\lag \mu^\vee, \gamma_j\rag \be_j^\vee. 
\end{align}
In particular, $\sum_{j\not\in J}\gamma_j\otimes \be_j^\vee\in X^*\otimes X_*$ is independent of $I$ or $J$. 
\end{corollary}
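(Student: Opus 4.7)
The plan is to extract both identities from Corollary~\ref{cor:zero} by isolating, in each equation, the part that is linear in the variable ($\lambda$ in the first identity, $z$ in the second) from the constant terms, and then passing from an identity on the abelian variety $\catA^\vee$ (resp.\ $\catA$) to the underlying lattice $X^*$ (resp.\ $X_*$) by plugging in pure tensors $\lambda = \mu\otimes t$ with $t\in E$ of infinite order.

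First, expanding $w_\bullet\lambda = w(\lambda) + w(\hbar\rho) - \hbar\rho$, the first identity of Corollary~\ref{cor:zero} rewrites as
\[
v(\lambda) - w(\lambda) + \sum_{j\notin J}\lambda_{\beta_j^\vee}\gamma_j \;=\; \hbar\sum_{j\in J}\gamma_j \,+\, w(\hbar\rho) - \hbar\rho.
\]
The left-hand side is a group homomorphism $\catA^\vee\to\catA^\vee$ in the variable $\lambda$, while the right-hand side does not depend on $\lambda$. Evaluating at $\lambda = 0$ forces the constant to vanish, and hence the homomorphism is identically zero. Substituting $\lambda = \mu\otimes t$ for $\mu\in X^*$ and using $v(\mu\otimes t) = v(\mu)\otimes t$ together with $\lambda_{\beta_j^\vee} = \langle\mu,\beta_j^\vee\rangle\, t$, this identity becomes
\[
\Bigl(v(\mu) - w(\mu) + \sum_{j\notin J}\langle\mu,\beta_j^\vee\rangle\gamma_j\Bigr)\otimes t \;=\; 0 \quad\text{in } X^*\otimes E.
\]
Choosing $t$ non-torsion and using that $X^*$ is free of finite rank yields the first formula of~\eqref{eq:la}. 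The second formula follows by the identical procedure applied to the second identity of Corollary~\ref{cor:zero}, expanding $v_\bullet^{-1}z = v^{-1}(z) - v^{-1}(\hbar\rho^\vee) + \hbar\rho^\vee$ and substituting $z = \mu^\vee\otimes t$.

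For the ``in particular'' statement, I will use the canonical isomorphism $X^*\otimes X_* \cong \End(X^*)$ induced by the perfect pairing $X^*\times X_* \to \bbZ$, under which $\mu\otimes\xi^\vee$ corresponds to the endomorphism $\nu\mapsto \langle\nu,\xi^\vee\rangle\mu$. The element $\sum_{j\notin J}\gamma_j\otimes\beta_j^\vee$ then corresponds to $\nu\mapsto\sum_{j\notin J}\langle\nu,\beta_j^\vee\rangle\gamma_j$, which by the first formula of~\eqref{eq:la} equals $\nu\mapsto w(\nu)-v(\nu)$. Since this endomorphism depends only on $v$ and $w$, the element itself is independent of $I$ and $J$.

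The only potential difficulty is bookkeeping: one must carefully track the $\hbar$-dependent shifts coming from the dotted Weyl actions so that the ``linear in $\lambda$'' and ``constant in $\lambda$'' parts separate cleanly. Once the linear identity in $\catA^\vee$ is secured, the non-torsion $t$ argument makes the descent to $X^*$ automatic.
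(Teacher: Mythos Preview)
Your proof is correct and takes essentially the same approach as the paper: both extract the lattice identities from Corollary~\ref{cor:zero} by substituting pure tensors and passing from $\catA^\vee$ (resp.\ $\catA$) to $X^*$ (resp.\ $X_*$). The only difference is organizational: the paper substitutes $z=\hbar\mu^\vee$ directly and then cancels the $\rho^\vee$-terms via the identity $\rho^\vee-v^{-1}(\rho^\vee)=\sum_{j}\beta_j^\vee$, whereas you first separate the $\lambda$-linear part from the constant by the homomorphism argument before substituting $\lambda=\mu\otimes t$ with $t$ non-torsion (which has the minor advantage of not needing $\hbar$ itself to be non-torsion).
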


\begin{proof}We prove the second identity of \eqref{eq:la} first. Let $z=\hbar\otimes \mu^\vee$, then  the second identity in Corollary \ref{cor:zero} can be written as
\[
\hbar\otimes (v^{-1}(\mu^\vee-\rho^\vee)+\rho^\vee-w^{-1}(\mu^\vee))=\hbar\otimes ( \sum_{j=1}^l\be_j^\vee-\sum_{j\not\in J}\lag \mu^\vee, \gamma_j\rag \be_j^\vee),
\]
which implies the following identity:
\[
v^{-1}(\mu^\vee-\rho^\vee)+\rho^\vee-w^{-1}(\mu^\vee)=\sum_{j=1}^l\be_j^\vee-\sum_{j\not\in J}\lag \mu^\vee, \gamma_j\rag \be_j^\vee.
\]
It is easy to see that $\rho^\vee-v^{-1}(\rho^\vee)=\sum_{j=1}^l\be_j^\vee$, so the identity  in \eqref{eq:la} holds. The first identity of \eqref{eq:la} can be proved similarly.
\end{proof}

\subsection{Logarithmic degree} Motivated by the previous corollary, for any $w\in W$,  we consider the following map
\[
\up_{w}:X^*\to X^*, \mu\mapsto \mu-w(\mu), 
\] 
Via the isomorphism $\Hom_\bbZ(X^*,X^*)\cong X^*\otimes X_*$, the map $\up_{w}$ is uniquely determined by an element of $X^*\otimes X_*$. That is, if  $\up_{w}=\sum_j\gamma_j\otimes \be_j^\vee\in  X^*\otimes X_*$, then 
\begin{equation}
\label{eq:up}
\up_{w}(\mu)=\sum_{j}\lag\mu, \be_j^\vee\rag\gamma_j, \quad \mu\in X^*. 
\end{equation}
Moreover,  the isomorphism $\Hom_\bbZ(X_*,X_*)\cong X^*\otimes X_*$ makes  $\up_{w}$ into a map 
\[
\up_{w}:X_*\to X_*, \quad \up_{w}(\mu^\vee)=\sum_j\lag\mu^\vee, \gamma_j\rag\be_j^\vee.
\]
Note that for any $\mu'\in X^*, \mu^\vee\in X_*$, we have $
\lag w(\mu'), \mu^\vee\rag=\lag \mu', w^{-1}(\mu^\vee)\rag.$ Therefore, 
\[\up_{w}(\mu^\vee)=\mu^\vee-w^{-1}(\mu^\vee). \]

We call this element $\up_{w}$  the {\it logarithmic degree} of the element $w\in W$. Clearly from \eqref{eq:up}, one can write $\up_{w}=\sum_{j}\gamma_j\otimes \be_j^\vee$  so that  $\gamma_j$ are positive roots and $\be_j^\vee$ are simple roots for all $j$. We will not distinguish  $\up_{w}\in X^*\otimes X_*$ from the element it defines in $\End_\bbZ(X^*)$ or $\End_\bbZ(X_*)$. 

\begin{remark}The notion $\Theta_w$ coincides with the map $\chi$ in \cite[\S~3.1]{KS23}. Moreover, $\Theta_w-\Theta_v$ controls the quasi-periods of the sections of $\bbS_{w,v}$, which corresponds to \cite[Proposition 2]{KS23}. 
\end{remark}
For convenience, in the following proposition we denote the Weyl group action on $X^*$ and $X_*$ by $w$ and $w^\vee$ respectively.
\begin{prop}
\begin{enumerate}
\item $\up_{e}=0$. 
\item $u(\up_{v}-\up_w)=\up_{uv}-\up_{uw}$, $u^\vee(\up_{v}-\up_w)=\up_{vu^{-1}}-\up_{wu^{-1}}$. 
\end{enumerate}
\end{prop}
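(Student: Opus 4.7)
The plan is to verify both parts directly from the definition $\up_w(\mu) = \mu - w(\mu)$ and its dual form $\up_w(\mu^\vee) = \mu^\vee - w^{-1}(\mu^\vee)$. Part~(1) is immediate: $\up_e(\mu) = \mu - e(\mu) = 0$, so $\up_e$ is the zero element of $X^* \otimes X_*$.

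For part~(2), the crucial observation is that although each individual $\up_v(\mu) = \mu - v(\mu)$ contains a ``constant'' term $\mu \mapsto \mu$, this contribution cancels in the difference $\up_v - \up_w$, which acts as $\mu \mapsto w(\mu) - v(\mu)$ on $X^*$ (equivalently, as $\mu^\vee \mapsto w^{-1}(\mu^\vee) - v^{-1}(\mu^\vee)$ on $X_*$). This is precisely why the proposition must be phrased in terms of differences: the raw quantities $u \cdot \up_v$ and $\up_{uv}$ are not equal.

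For the first identity, I would view $\up_v - \up_w$ as an element of $\End_{\bbZ}(X^*) \cong X^* \otimes X_*$, so that the action of $u \in W$ on the first tensor factor translates into post-composition with $u$ on $X^*$. Then $u(\up_v - \up_w)$ sends $\mu$ to $u(w(\mu) - v(\mu)) = uw(\mu) - uv(\mu)$, which equals $(\up_{uv} - \up_{uw})(\mu)$ by a one-line computation. For the second identity, I would instead use $X^* \otimes X_* \cong \End_{\bbZ}(X_*)$, under which the action of $u^\vee$ on the second factor becomes post-composition with $u^\vee$ on $X_*$; both sides then collapse to $u^\vee w^{-1}(\mu^\vee) - u^\vee v^{-1}(\mu^\vee)$, using $(wu^{-1})^{-1} = u w^{-1}$ on the right-hand side.

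There is no genuine obstacle. The only point requiring care is the bookkeeping needed to identify the $W$-action on each factor of $X^* \otimes X_*$ with left-composition on the corresponding endomorphism space, and this follows directly from the $W$-invariance of the pairing $\langle w\mu, w^\vee\mu^\vee\rangle = \langle \mu, \mu^\vee\rangle$.
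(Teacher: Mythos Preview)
Your proof is correct and follows essentially the same approach as the paper: both verify the identities by evaluating on an arbitrary $\mu\in X^*$ (resp.\ $\mu^\vee\in X_*$), using that $(\up_v-\up_w)(\mu)=w(\mu)-v(\mu)$ and that the $W$-action on the first (resp.\ second) tensor factor of $X^*\otimes X_*$ corresponds to post-composition in $\End_\bbZ(X^*)$ (resp.\ $\End_\bbZ(X_*)$). Your side remark explaining why the statement must be formulated for differences is a helpful addition not made explicit in the paper.
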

\begin{proof}(1)  is  obvious. For (3), write $\up_{v}-\up_w=\sum_{j}\ga_{j}\otimes \be_{j}^\vee$, so  \[(\up_v-\up_w)(\mu)=w(\mu)-v(\mu)=\sum_{j}\lag\mu, \be_{j}^\vee\rag\ga_{j}.\]
  Then 
\[
u(\up_v-\up_w)=\sum_{j}u(\gamma_j)\otimes \be_j^\vee, 
\]
 \[
u(\up_v-\up_w)(\mu)=\sum_{j}\lag\mu, \be_j^\vee\rag u(\gamma_j)=u((\up_{v}-\up_w)(\mu))=u(w\mu)-v(\mu))=uv(\mu)-uw(\mu)=(\up_{uv}-\up_{uv})(\mu). 
\]
Therefore, $u(\up_{v}-\up_w)=\up_{uv}-\up_{ uw}$. The second part can be proved by using the identity $(\up_{v}-\up_w)(\mu^\vee)=w^{-1}(\mu^\vee)-v^{-1}(\mu^\vee).$
\end{proof}

\begin{remark}Suppose $w\le v$. From Corollary \ref{cor:zero} and Corollary \ref{cor:la}, one has
\[\sum_{\ga\in \Phi(w)}\ga=\sum_{j\in J}\gamma_j, \quad \sum_{\be^\vee\in \Phi^\vee(v^{-1})}\be^\vee=\sum_{j\in I}\be_j^\vee, \quad \up_v-\up_w=\sum_{j\not\in J}\gamma_j\otimes \be_j^\vee.\]  In other words, one can compute the RHSs of these identities without using the notations of Theorem \ref{thm:res}. Then via \eqref{eq:bbS''res-la} and \eqref{eq:bbS''res-z}, these three elements uniquely determine the line bundle $\bbS''_{w,v}$. 
\end{remark}
For simplicity, we write $\la\otimes \mu^\vee=\la \mu^\vee$ in $X^*\otimes X_*$.

\begin{example} In the $A_3$ case,  we compute $\up_{v}-\up_w$ with $v=s_1s_2s_3s_1s_2s_1$ and $w=s_1$.  We have  
\[
\up_v=\al_1\al_1^\vee+\al_2\al_{1+2}^\vee+\al_3\al_{1+2+3}^\vee+\al_1\al_2^\vee+\al_2\al_{2+3}^\vee+\al_1\al_3^\vee, \quad \up_w=\al_1\al_1^\vee,
\]
so
\[
\up_v-\up_w=\al_2\al_{1+2}^\vee+\al_3\al_{1+2+3}^\vee+\al_1\al_2^\vee+\al_2\al_{2+3}^\vee+\al_1\al_3^\vee. 
\]
\end{example}

\begin{example}
In the case of $C_2$, we compute $\up_v-\up_w$ with $v=s_1s_2s_1, w=e$. We have
\[
\up_v-\up_e=\al_1\al_1^\vee+\al_2(\al_1^\vee+\al_2^\vee)+\al_1(\al_1^\vee+2\al_2^\vee).
\]
\end{example}

\begin{example}
We consider the $A_2$ case. Let $v=s_1s_2s_1$. We compute $\up_{s_1s_2s_1}-\up_w$ for  all  $w\le s_1s_2s_1$. Then 
\begin{align*}
\up_{s_1s_2s_1}-\up_e&=\al_1\al_2^\vee+\al_2\al_{1+2}^\vee+\al_1\al_1^\vee=\al_{1+2}\al_{1+2}^\vee.\\
\up_{s_1s_2s_1}-\up_{s_1}&=\al_1\al_2^\vee+\al_2\al_{1+2}^\vee.\\
\up_{s_1s_2s_1}-\up_{s_2}&=\al_2\al_1^\vee+\al_{1}\al_{1+2}^\vee.\\
\up_{s_1s_2s_1}-\up_{s_1s_2}&=\al_2\al_1^\vee.\\
\up_{s_1s_2s_1}-\up_{s_2s_1}&=a_1\al_2^\vee.\\
\end{align*}
\end{example}

\appendix
\section{A complete list of  elliptic classes}\label{sec:complete}
In this section, we collect all elliptic classes that one could consider in our setting. This is for the sake of completeness, and for the convenience of the readers. The proofs of the properties in this section are similar to the corresponding ones in the previous section. 

\subsection{Complete list of elliptic classes}\label{subsec:complete_list}
Here is a complete list of elliptic classes one can consider, including the ones we defined before:
\begin{align*}
\bfE_v^{z,\la}&=T^{z,\la}_{v}\bullet \fc f_e,&\Ell_v^{z,\la}&=T^{z,\la}_{vw_0}\bullet \frac{{}^{w_0^\dyn}\bfh\cdot {}^{w_0^\dyn}\fc}{\bfg}f_{w_0},\\
\bfE_v^{z,\la,\dyn}&=T^{z,\la,\dyn}_{v}\bullet ^\dyn\fc f_e^\dyn,&\Ell_v^{z,\la,\dyn}&=T^{z,\la,\dyn}_{vw_0}\bullet^\dyn \frac{{}^{w_0}\fc\cdot \bfg}{{}^{w_0^\dyn}\bfh} f_{w_0}^\dyn,\\
\bfE_v^{z,-\la}&=T^{z,-\la}_{v}\bullet \frac{\bfg}{\fc} f_e,&\Ell_{v}^{z,-\la}&=T^{z,-\la}_{vw_0}\bullet \frac{\bfh}{{}^{w_0^\dyn}\fc}f_{w_0},\\
\bfE_v^{z,-\la,\dyn}&=T^{z,-\la, \dyn}_{v}\bullet^\dyn \frac{\bfg}{\fc} f_e^\dyn,&\Ell_v^{z,-\la,\dyn}&=T^{z,-\la,\dyn}_{vw_0}\bullet  ^\dyn \frac{\bfg\cdot {}^{w_0}\bfg}{\bfh\cdot  {}^{w_0}\fc}f_{w_0}^\dyn,\\
\bfE_v^{-z,\la}&=T^{-z,\la}_{v}\bullet \frac{1}{\bfh \fc} f_e,&\Ell_v^{-z,\la}&=T^{-z,\la}_{vw_0}\bullet \frac{1}{{}^{w_0}\bfg\cdot {}^{w_0^\dyn}\fc}f_{w_0},\\
\bfE_v^{-z,\la,\dyn}&=T^{-z,\la,\dyn}_{v}\bullet^\dyn \frac{1}{\bfh\fc} f_e^\dyn,&\Ell_v^{-z,\la,\dyn}&=T^{-z,\la,\dyn}_{vw_0}\bullet ^\dyn \frac{{}^{w_0}\bfg}{\bfh\cdot {}^{w_0^\dyn}\bfh\cdot {}^{w_0}\fc}f_{w_0}^\dyn,\\
(T^{z,\la}_{u})^*&=\sum_{x\ge u}b^{z,\la}_{x^{-1},u^{-1}}\cdot {{}^{u^\dyn}\fc}f_{x},&(T_u^{z,\la,\dyn})^*&=\sum_{y\ge u}b^{z,\la,\dyn}_{y^{-1},u^{-1}}\cdot {}^{u}\fc f_{y}^\dyn,\\
  (T_u^{z,-\la})^*&=\sum_{x\ge u}b^{z,-\la}_{x^{-1}, u^{-1}}\cdot   \frac{\bfg}{ {}^{u^\dyn}\fc } f_{x},&
(T^{z,-\la, \dyn}_u)^*&=\sum_{y\ge u}b^{z,-\la,\dyn}_{y^{-1},u^{-1}}\cdot \frac{{}^{u}\bfg}{{}^{u}\fc}f^\dyn_{y},\\
(T_u^{-z,\la})^*&=\sum_{x\ge u}b^{-z,\la}_{x^{-1},u^{-1}}\cdot \frac{1}{{}^{u^\dyn}\bfh\cdot {}^{u^\dyn}\fc}f_{x},&
(T_u^{-z,\la, \dyn})^*&=\sum_{y\ge u}b^{-z,\la, \dyn}_{y^{-1},u^{-1}}\cdot \frac{1}{\bfh\cdot {}^{u}\fc}f^\dyn_{y}.
\end{align*}
Using the definitions of the periodic modules, one can verify that the $\bfE^?_v$ and $\Ell_v^?$ are rational sections of the corresponding vector bundles. Similarly, one can use the definition of the $b$-coefficients to check that the $(T^?_u)^*$ are all rational sections of the corresponding vector bundles. Note also that in the definition of the classes $\Ell_v^?$, we apply $T^?_{{vw_0}}$ on certain  rational sections of degree $w_0$ or $w_0^\dyn$. These sections are chosen so as to obtain the Poincar\'e duality without any normalization factor. 

\subsection{Rational maps on the other periodic modules}\label{subsec:invmat} For the operators 
\[T^{z,-\la}_v,\quad  T^{-z,\la}_v, \quad T_v^{z,-\la, \dyn},\quad T_v^{-z,\la,\dyn},\]we can repeat the construction in \S~\ref{subsec:trans}, and define
\begin{align*}
\bbT^{z,-\la}&=\sum_v\ep_vT_v^{z,-\la}, & \bbT^{-z,\la}&=\sum_v\ep_vT_v^{-z,\la}, \\
\bbT^{ z, - \la, \dyn}&=\sum_{v}\ep_vT^{ z,  -\la, \dyn}, &\bbT^{ -z,  \la, \dyn}&=\sum_{v}\ep_vT^{ -z,  \la, \dyn}.
\end{align*}
Their domains and codomains are indicated in the diagram below.
Note that we add the sign $\ep_v=(-1)^{\ell(v)}$  in these four identities. Indeed, similar as Theorem \ref{thm:inverse_maps}, we have 
 \begin{align*}
\bbT^{z,-\la}(\Ell_u^{z,-\la})&=\frac{\ep_u \bfg}{{}^{u^\dyn}\fc}f_{u^{-1}}^\dyn, & \bbT^{z,-\la,\dyn}(\Ell_u^{z,-\la,\dyn})&=\frac{\ep_u \cdot {}^{u}\bfg}{{}^{u}\fc}f_{u^{-1}}.\\ \bbT^{-z,\la}(\Ell_u^{-z,\la})&=\frac{\ep_u }{{}^{u^\dyn}\bfh\cdot {}^{u^\dyn}\fc}f_{u^{-1}}^\dyn, &\bbT^{-z,\la,\dyn}(\Ell_u^{-z,\la,\dyn})&=\frac{\ep_u }{\bfh\cdot {}^{u}\fc}f_{u^{-1}}.
\end{align*}
The rational sections in front of the $f$'s are naturally induced by the prefixed rational sections $\fc, \bfg, \bfh$. 
Moreover, we have
\begin{enumerate}
\item The matrices $(a^{z,-\la}_{v,w})_{v,w\in W}$ is the inverse  of the matrix $(\ep_w\ep_v a^{z,-\la,\dyn}_{v^{-1}, w^{-1}})_{v,w\in W}$. Consequently, $\bbT^{z,-\la}$ is the inverse of $\bbT^{z,-\la,\dyn}$.
\item The matrices $(a^{-z,\la}_{v,w})_{v,w\in W}$ is the inverse  of the matrix $(\ep_w\ep_v a^{-z,\la,\dyn}_{v^{-1}, w^{-1}})_{v,w\in W}$.
Consequently, $\bbT^{-z,\la}$ is the inverse of $\bbT^{-z,\la,\dyn}$.
\end{enumerate}
Their relation with the periodic modules are summarized in the diagram below.

\subsection{A  diagram}\label{subsec:diagram}
Relations of the periodic modules and elliptic classes are  summarized in the following triangular prism:
\begin{equation}\label{eq:diagram}
\xymatrix{\bfE^{z,\la}_v,\Ell_v^{z,\la}=(T_v^{z,\la})^*\ar@{.>}[r]&\bbM\ar@{-->}@/^1pc/[ddd]^{\bbT^{z,\la}}\ar@{<->}[rr]^-{\lag\_,\_\rag_{\la}}\ar@{<->}[dr]^-{\lag\_,\_\rag_{z}}&&\bbM(-\la)\ar@{-->}@/^1pc/[ddd]^{ \bbT^{z,-\la}}&\bfE_v^{z,-\la}, \ar@{.>}[l]\Ell_v^{z,-\la}=(T^{z,-\la}_v)^*\\
&&\bbM(-z)\ar@{-->}@/^1pc/[ddd]^{\bbT^{-z,\la}} &&\bfE_v^{-z,\la}, \Ell_v^{-z,\la}=(T^{-z,\la}_v)^*\ar@{.>}[ll]\\
&&&&&\\
\bfE_v^{z,\la,\dyn},  \Ell_v^{z,\la,\dyn}=(T^{z,\la,\dyn})^*\ar@{.>}[r]&\bbM^\dyn\ar@{-->}@/^1pc/[uuu]^-{\bbT^{z,\la,\dyn}} \ar@{<->}[rr]\ar@{<->}[dr]&&\bbM(-\lambda)^\dyn\ar@{-->}@/^1pc/[uuu]^{\bbT^{z,-\la,\dyn}}&\bfE_{v}^{z,-\la,\dyn}, \Ell_v^{z,-\la,\dyn}=(T^{z,-\la,\dyn}_v)^*\ar@{.>}[l]\\
&&\bbM(-z)^\dyn\ar@{-->}@/^1pc/[uuu]^{\bbT^{-z,\la,\dyn}}&&\bfE_v^{-z,\la,\dyn},\Ell_v^{-z,\la,\dyn}=(T^{-z,\la,\dyn}_v)^*\ar@{.>}[ll]}
\end{equation}
The top face denotes the periodic modules for the  chosen root system and the bottom face denotes the modules for the Langlands dual system. The solid two-head arrows on the top  and bottom faces denote the four Poincar\'e Pairings $\lag\_,\_\rag_{\la}$, $\lag\_,\_\rag_{z}$, $\lag\_,\_\rag_{\la}^\dyn$, $\lag\_,\_\rag_{z}^\dyn$. The vertical dashed arrows denote the rational maps $\bbT$ induced by the dynamical DL operators, which are mutually inverses to each other (Theorem \ref{thm:inverse_maps}). The twelve elliptic  classes are  rational sections of the corresponding periodic modules, indicated by the dotted arrows. Moreover, the $\bfE$ classes are always dual to the $\calE$ classes, and the $\calE$ classes are always equal to the $T^*$ classes. 

\subsection{ 3d mirror symmetry}\label{sec:mirror}
The transition matrices in Theorem \ref{thm:invmat} (and also in \S~\ref{subsec:invmat}) are not only inverses to each other, they are also equal to each other after certain shifts by $w_0$ and normalization by the rational sections $\bfg, \bfh$. We establish such a statement in this section.

In addition to the relation between the $a$-coefficients in Remark \ref{rem:acoeffrelation}, Theorem \ref{thm:invmat} also gives the following result.
\begin{lemma}\label{lem:trans_mat}We have 
\begin{align*}
\bfg\cdot \ep_x\ep_ua^{z,-\la, \dyn}_{x, u}={}^{(uw_0)^\dyn}\bfh\cdot {}^{(uw_0)^\dyn x^{-1}}a^{z,-\la}_{uw_0, xw_0}, \quad \bfg  \cdot a_{x, u}^{z,\la,\dyn}={}^{u^\dyn}\bfh\cdot {}^{(uw_0)^\dyn x^{-1}} a^{z,\la}_{uw_0, xw_0}. 
\end{align*}
\end{lemma}
\begin{proof}
We have
\begin{align*}
\Ell_{u}^{z,-\la}&=T^{z, -\la}_{uw_0}\bullet \frac{\bfh}{{}^{w_0^\dyn}\fc} f_{w_0}\\
&=\sum_{w}\de_{uw_0}^\dyn a^{z,-\la}_{uw_0, w}\de_w\bullet \frac{\bfh}{{}^{w_0^\dyn}\fc}  f_{w_0}\\
&=\sum_w {}^{(uw_0)^\dyn}(\frac{\bfh}{{}^{w_0^\dyn}\fc} )\cdot {}^{(uw_0)^\dyn (ww_0)^{-1}}a^{z,-\la}_{uw_0, w}f_{ww_0}\\
&=\sum_x \frac{{}^{(uw_0)^\dyn}\bfh}{{}^{u^\dyn}\fc} \cdot {}^{(uw_0)^\dyn x^{-1}}a^{z,-\la}_{uw_0, xw_0}f_{x}. 
\end{align*}
Recall from Corollary \ref{cor:opp=*}, $\calE_{u}^{z,-\la}=(T_{u}^{z,-\la})^*$. Comparing with the formula for $(T^{z,-\la}_{u})^*$ in \S~\ref{subsec:complete_list},  we get 
\[
\frac{{}^{(uw_0)^\dyn}\bfh}{{}^{u^\dyn}\fc}\cdot {}^{(uw_0)^\dyn x^{-1}}a^{z,-\la}_{uw_0, xw_0}=b^{z,-\la}_{x^{-1},u^{-1}}\cdot \frac{\bfg}{{}^{u^\dyn}\fc},
\]
which gives
\[
\bfg\cdot b^{z,-\la}_{x^{-1},u^{-1}}={}^{(uw_0)^\dyn}\bfh\cdot {}^{(uw_0)^\dyn x^{-1}}a^{z,-\la}_{uw_0, xw_0}.
\]
By \S~\ref{subsec:invmat}, the matrix $(\ep_w\ep_v a^{z,-\la,\dyn}_{v^{-1}, w^{-1}})_{v,w\in W}$ is inverse to  $(a^{z,-\la}_{v,w})_{v,w\in W}$, so 
\[
\bfg\cdot \ep_x\ep_ua^{z,-\la, \dyn}_{x, u}=\bfg\cdot b^{z,-\la}_{x^{-1},u^{-1}}={}^{(uw_0)^\dyn}\bfh\cdot {}^{(uw_0)^\dyn x^{-1}}a^{z,-\la}_{uw_0, xw_0}.
\]
This proves the first identity. Applying $\Gamma_\la$ (see Remark \ref{rem:Gamma} below), and use the fact that 
\[\Gamma_\la ({}^{(uw_0)^\dyn}\bfh)={}^{u^\dyn}\bfh,\] we obtain the second identity. 
\end{proof} 

\subsection{Comparison with a 3d-mirror symmetry result of Rimanyi and Weber}
Similar to the calculation in \eqref{eq:bfE-z-la}, we have 
\begin{align}\label{eq:res1}
\bfE^{z,\la}_v&=\sum_w\frac{{}^{w^{-1}}\bfg}{\bfg}\cdot a^{z,-\la}_{v^{-1},w^{-1}}\cdot {}^{v^\dyn}\fc f_{w}=:\sum_{w}c^{z,\la}_{v,w}f_{w}.\\
\label{eq:res2}
\bfE_{v}^{z,\la, \dyn}&=\sum_{w}\frac{\bfg}{{}^{v}\bfg}a^{z,-\la, \dyn}_{v^{-1},w^{-1}} \cdot {}^{v}\fc f_{w}^\dyn=:\sum_wc^{z,\la,\dyn}_{v,w}f_{w}^\dyn.
\end{align}
From the first identity of Theorem \ref{lem:trans_mat}, we obtain
\[
{}^{v^{-1}} c^{z,\la,\dyn}_{v,w}\ep_v\ep_w \cdot {}^{v^{-1}w_0}\bfg={}^{(w^{-1}w_0)^\dyn}\bfh\cdot {}^{(w^{-1}w_0)^\dyn }c_{w_0w, w_0v}^{z,\la}
\]
Explicitly, we have
\begin{equation}
{}^{v^{-1}}c_{v,w}^{z,\la,\dyn}\cdot \prod_{\al>0}\frac{\theta(\hbar+z_{v^{-1}(\al)})}{\theta(z_\al)}=\prod_{\al>0}\frac{\theta(\hbar+\la_{w^{-1}(\al^\vee)})}{\theta(\la_{\al^\vee})}\cdot {}^{(w^{-1}w_0)^\dyn}c_{w_0w,w_0v}^{z,\la}. 
\end{equation}
This identity recovers \cite[Theorem 7]{RW22}, which provides a comparison of restriction coefficients of the elliptic classes (after certain normalization) and those of the Langlands dual system. Such comparison was predicted by the 3d mirror symmetry \cite[\S~1.3.1]{AO21}.  A large class of examples are given in \cite{RSVZ19, RSZV22}. 

\begin{remark}\label{rem:Gamma}
 Let $\Gamma_\la$ (resp. $\Gamma_z$) be the operation on rational sections by formally replacing $\la$ by $-\la$ (resp. $z$ by $-z$). Then by comparing the formula of the operators in \S~\ref{subsec:DL_operator} and use the calculation in \S~\ref{subsec:braid}, we easily obtain the following four identities.
 \begin{align}
\label{eq:aGammala}
a^{z,\la}_{v,w}&=\Gamma_\la(a^{z,-\la}_{v,w}),& ~a^{z,\la,\dyn}_{v,w}&=\Gamma_z(a^{-z, \la,\dyn}_{v,w}).\\
\label{eq:aGammaz}a^{z,\la,\dyn}_{v,w}&=\ep_v\ep_w\Gamma_\la(a^{z,-\la, \dyn}_{v,w}), & a^{z,\la}_{v,w}&=\ep_v\ep_w\Gamma_z(a^{-z,\la}_{v,w}).
\end{align}
 Looking at their inverses matrices, we can also obtain the last four identities.
\begin{align}
\label{eq:bGammala}b^{z,\la}_{v,w}&=\Gamma_\la(b^{z,-\la}_{v,w}),& ~b^{z,\la,\dyn}_{v,w}&=\Gamma_z(b^{-z, \la,\dyn}_{v,w}).\\
\label{eq:bGammaz}b^{z,\la,\dyn}_{v,w}&=\ep_w\ep_v\Gamma_\la(b^{z,-\la, \dyn}_{v,w}), & b^{z,\la}_{v,w}&=\ep_w\ep_v\Gamma_z(b^{-z,\la}_{v,w}).
\end{align}
Using these identities,  properties of one transition matrix implies the same property of another.  
\end{remark}

\newcommand{\arxiv}[1]
{\texttt{\href{http://arxiv.org/abs/#1}{arXiv:#1}}}
\newcommand{\doi}[1]
{\texttt{\href{http://dx.doi.org/#1}{doi:#1}}}
\renewcommand{\MR}[1]
{\href{http://www.ams.org/mathscinet-getitem?mr=#1}{MR#1}}

\end{document}